\newtheorem{Theorem}{Theorem}
\newtheorem{Example}[Theorem]{Example}
\newtheorem{theorem}{Theorem}[section]
\newtheorem{lemma}[theorem]{Lemma}
\newtheorem{corollary}[theorem]{Corollary}
\theoremstyle{definition}
\newtheorem{definition}[theorem]{Definition}
\theoremstyle{remark}
\numberwithin{equation}{section}
\newcommand{\ot}{\otimes}
\newcommand{\ra}{\rightarrow}
\newcommand{\BR}{\mathbb{R}}
\newcommand{\BZ}{\mathbb{Z}}
\begin{document}

\title[Knotted Tori in $\BR^4$]{Embedded and Lagrangian Knotted  Tori in $\BR^4$ and Hypercube Homology}

\author[S. Baldridge]{Scott Baldridge}

\thanks{S. Baldridge was partially supported by NSF Career Grant DMS-0748636.}

\address{Department of Mathematics, Louisiana State University \newline
\hspace*{.375in} Baton Rouge, LA 70817, USA} \email{\rm{sbaldrid@math.lsu.edu}}

\subjclass{}
\date{August 15, 2010}

\begin{abstract}
In this paper we introduce a representation of a embedded knotted (sometimes Lagrangian) tori in $\BR^4$ called a hypercube diagram, i.e., a 4-dimensional cube diagram.  We prove the existence of hypercube homology that is invariant under 4-dimensional cube diagram moves, a homology that is based on knot Floer homology.  We provide examples of hypercube diagrams and hypercube homology, including using the new invariant to distinguish (up to cube moves) two ``Hopf linked'' tori.  We also give examples of a ``Trefoil'' torus and an immersed knotted torus that is an amalgamation of the $5_2$ knot and a trefoil knot.

\end{abstract}

\maketitle

\bigskip
\section{Introduction}
\bigskip

The study of Lagrangian tori in symplectic 4-manifolds can be traced back to Arnold \cite{Arnold}.   Luttinger \cite{Lut} and Eliashberg-Polterovich \cite{EP} provided significant contributions to this subject in terms of strong restrictions  on  what embedded Lagrangian tori can  exist in $\BR^4$ (see also \cite{EP1}).  Luttinger showed that known constructions of knotted tori in $\BR^4$ do not produce examples of knotted Lagrangian tori.  He questioned whether the unknotted standard torus was the only Lagrangian torus in $\BR^4$.    This question is central to the research started in this paper.  It should be noted that  knotted Lagrangian tori do exist in closed 4-manifolds with $b_2>0$ (cf. \cite{V, FS}), so there is some hope that the same is true in $\BR^4$.
  
\medskip

Another centerpiece of this paper is the development of invariants for smoothly embedded knotted tori in $\BR^4$ in general.  Of course, the study of smoothly embedded knotted surfaces in $\BR^4$ has a rich history (cf. \cite{Fox} and \cite{CS} for example).

\medskip

One of the barriers to studying Lagrangian or embedded knotted tori has been that there are few ways to represent knotted tori in $\BR^4$ that lead to powerful yet easy-to-compute invariants.  In this paper we announce and describe a new  representation of a 2-dimensional knotted tori  in $\BR^4$ called a hypercube diagram.  We also define, state and prove the existence of a homology theory for hypercubes that is clearly invariant under three 4-dimensional hypercube diagram moves.  The hypercube moves are similar in nature to the 3-dimensional cube diagram moves described in \cite{BL}.  The homology theory is based upon knot Floer homology  \cite{PS}.

\medskip

The results of this paper can be nicely summarized by the following two theorems and one example:

\begin{Theorem}
Let $H\Gamma$ be a hypercube diagram.  Then $H\Gamma$ represents a $PL$-torus  that can be smoothed to an immersed Lagrangian torus in $\BR^4$.  If none of the double point circles intersect each other in the $PL$-torus, then the $PL$-torus can be smoothed to an embedded torus in $\BR^4$.  If there are no double point circles, then the $PL$-torus can be smoothed to an embedded Lagrangian torus in $\BR^4$.
\end{Theorem}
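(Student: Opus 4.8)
The plan is to turn the combinatorial data of $H\Gamma$ into an explicit piecewise-linear surface in $\BR^4$ built out of flat unit squares, to observe that each square lies in a Lagrangian coordinate $2$-plane, and then to smooth the resulting piecewise-Lagrangian object to a genuine immersed (or, under the stated hypotheses, embedded) Lagrangian torus. I would fix coordinates $(x_1,x_2,x_3,x_4)$ on $\BR^4$ with the standard symplectic form $\om = dx_1\wedge dx_2 + dx_3\wedge dx_4$, so that the four coordinate planes mixing a direction from $\{x_1,x_2\}$ with a direction from $\{x_3,x_4\}$ are exactly the Lagrangian ones, while the $\{x_1,x_2\}$- and $\{x_3,x_4\}$-planes are symplectic. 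The markings of the hypercube diagram prescribe, cell by cell, a square spanning two of the four directions, and the point is that the incidence rules of $H\Gamma$ force every such square to span one of the four Lagrangian pairs; each face of the PL-surface is therefore automatically Lagrangian.

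First I would verify that the assembled object is a closed, connected, oriented surface of genus one. This is bookkeeping: the gluing rules of $H\Gamma$ guarantee that each edge of each square is identified with exactly one edge of exactly one other square, so the union is a closed topological $2$-manifold away from the vertices, and a local Euler-characteristic count at the vertices together with the global parity constraints inherited from the cube-diagram conditions of \cite{BL} pins the Euler characteristic to $0$, hence the genus to $1$. Orientability follows by orienting each coordinate square compatibly, which the sign data encoded in the diagram provides.

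The analytic heart of the theorem is the Lagrangian smoothing. Along each edge two Lagrangian squares meet transversally, and at each vertex a cluster of squares meets; the union is smooth away from these loci but must be rounded there without destroying $\om\big|_{T\Sigma}=0$. In a Darboux chart, each edge configuration is, after a linear symplectic change of coordinates, the standard picture of a piece of the zero section meeting a cotangent fiber at a right angle in $T^{*}\R^{2}$, and the classical Lagrangian ``rounding the corner'' construction — interpolating the two pieces through the graph of the differential of a smoothed generating function — replaces a neighborhood of the edge by a smooth Lagrangian annulus agreeing with the original faces outside a thin tube. The genuine obstacle is to carry out all of these local roundings simultaneously at the vertices, where several edges come together and the separate corner models must be matched into a single smooth Lagrangian patch; there I would either exhibit an explicit Lagrangian chart adapted to the vertex (a product or cone model) or invoke a Weinstein-neighborhood uniqueness argument to interpolate, and I expect the verification that these local models are mutually compatible — rather than any single rounding — to be the crux of the proof.

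Finally, the immersion-versus-embedding dichotomy is governed by the double point set. The only places the smoothed surface can fail to be embedded are where two faces of $H\Gamma$ occupy the same position in $\BR^4$, and these self-intersections organize into the double point circles of the statement. If the double point circles are mutually disjoint, each is an isolated clean self-intersection along a circle, and a $C^{\infty}$-small perturbation supported near that circle removes it, yielding an embedded torus; here I would note that allowing the perturbation to break the Lagrangian condition is precisely what buys the room to resolve the crossing, which is why the second conclusion claims only embeddedness. If there are no double point circles at all, no perturbation is needed and the construction already outputs an embedded Lagrangian torus, giving the third conclusion. The excluded case of intersecting double point circles is exactly the case in which the clean self-intersection model fails and no such local resolution is available.
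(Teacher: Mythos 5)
Your proposal is correct and follows the paper's route almost exactly: build the PL surface out of flat rectangles lying in the four Lagrangian coordinate planes (your remark that the combinatorics exclude precisely the two symplectic planes is the paper's fourth marking condition, which forbids $yw$- and $zx$-flats with three markings), compute $\chi=0$ by a cell count, round edges and corners in a Lagrangian way, and settle embeddedness via double point circles. The one genuine difference is how the analytic step is executed. You invoke Darboux charts, smoothed generating functions, and possibly Weinstein-neighborhood uniqueness, and you explicitly leave the matching of edge roundings at a vertex as an unresolved crux; the paper instead writes the roundings down in closed form, smoothing an edge by $E(s,t) = (s,\ \varepsilon-\varepsilon\cos(t/\varepsilon),\ 0,\ \varepsilon-\varepsilon\sin(t/\varepsilon))$ and a vertex by the product of two quarter-circle arcs $V(s,t) = (\varepsilon-\varepsilon\cos(s/\varepsilon),\ \varepsilon-\varepsilon\cos(t/\varepsilon),\ \varepsilon-\varepsilon\sin(s/\varepsilon),\ \varepsilon-\varepsilon\sin(t/\varepsilon))$, then checking that $\omega=dw\wedge dy+dz\wedge dx$ annihilates the visible tangent frames. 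Because $V$ is literally a product model, it agrees with the edge models along the adjoining boundaries, so the compatibility you flagged is automatic---this is the ``explicit product chart'' alternative you yourself offered, and it is the cheaper one: no Weinstein machinery is needed. Two details where the paper is sharper than your sketch and which you would need to supply: the removability of a non-intersecting double point circle rests not on abstract cleanness of the intersection but on the observation that two rectangles that pass through each other always share a coordinate, so their intersection is nowhere transverse in $\BR^4$ and one sheet can be pushed off along the remaining free coordinate, consistently around the whole circle using the inherited orientations (and, as you correctly note, this push-off is what destroys the Lagrangian condition); and the obstructed case is identified concretely---double point circles are horizontal or vertical in the schematic, two of them intersect iff one is horizontal and one is vertical, and at such a crossing three rectangles meet in a configuration where the push-off still leaves a transverse double point.
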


Double point circles arise in the construction of the $PL$-torus and are simply intersections of the torus with itself along circles. The only time these  intersections cannot be perturbed away (smoothly) are when two of these circles intersect each other.  

\medskip
 
It was pointed out to the author by Ben McCarty that immersed Lagrangian tori may be profitably thought of as  Lagrangian projections of knotted Legendrian tori in $\BR^5$ (see\cite{Etnyre} for a discussion of Lagrangian projections of Legendrian knots).

\medskip

Like grid diagrams and cube diagrams, a hypercube diagram is a collection of markings in a $4$-dimensional cartesian grid (using $wxyz$-coordinates) that defines a loop or set of loops in $\BR^4$.  We require that these loops, when projected to the $wx$-, $yz$-, $zw$- and $xy$-planes, are grid diagram projections.  The first part of this paper is devoted to carefully defining hypercube diagrams so that the projected grid diagrams make sense and capture important information about the hypercube structure.  This care  is needed.  For example, given any loop in $\BR^4$, the knot projection of the loops under the composition of maps $\pi_z\circ \pi_y:\BR^4 \ra \BR^2$ (each map projects out the coordinate indicated) does not in general equal the knot projection of the loops under the composition of maps $\pi_y \circ \pi_z:\BR^4\ra\BR^2$. In particular, for a hypercube diagram $H\Gamma$, the images of these loops using the maps $\pi_y:\BR^4\ra \BR^3$ and $\pi_w:\BR^4\ra \BR^3$ are usually two different links in $\BR^3$, which we will name  $L_1$ and $L_2$ respectively.  Furthermore, denote the oriented grid projection of $L_1$ onto the $wx$-plane by $G_{wx}$ and the oriented grid projection of $L_2$ onto the $yz$-plane by $G_{yz}$. 

\medskip

The moves on a hypercube diagram are similar to cube diagrams (in fact, the hypercubes and moves should  easily generalize to any dimension, not just 3 or 4).  The first two hypercube moves generate well-defined grid diagram moves on the four projections.  Thus, the first two moves preserve the links $L_1$ and $L_2$ named above.  The difference between cube diagram moves and hypercubes moves is that the third hypercube move swaps the links $L_1$ and $L_2$ (or factors of $L_1$ and $L_2$), i.e., the oriented grid diagram projection $G_{wx}$ becomes $G_{yz}$ and vice versa. 

\medskip

We can now state the second theorem:

\medskip
\begin{Theorem} \label{main_theorem_invariant}
Let $H\Gamma$ be a $4$-dimensional hypercube diagram. Then $CH^-(H\Gamma)$ is a hypercube invariant, that is, it is invariant under all three hypercube moves.  In particular, $$CH^-(H\Gamma) \cong HFK^-(L_1)\ot HFK^-(L_2),$$
where $L_1$ is the link represented by the oriented grid diagram $G_{wx}$ and $L_2$ is the link represented by the oriented grid diagram $G_{yz}$.
\end{Theorem}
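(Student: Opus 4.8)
The plan is to reduce the statement to the combinatorial (grid) theory of knot Floer homology by identifying $CH^-(H\Gamma)$, at the chain level, with the tensor product of the two grid chain complexes $C^-(G_{wx})$ and $C^-(G_{yz})$ computing $HFK^-(L_1)$ and $HFK^-(L_2)$. The geometric input driving everything is that the coordinate $2$-planes $wx$ and $yz$ are complementary in $\BR^4$, so the $4$-dimensional cartesian grid underlying $H\Gamma$ is itself the product of the two $2$-dimensional grids underlying $G_{wx}$ and $G_{yz}$.

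First I would set up the bijection on generators. Because the $wx$- and $yz$-directions are independent in the ambient grid, a generator of $CH^-(H\Gamma)$ is determined by, and equivalent to, a pair $(\mathbf{x},\mathbf{y})$ with $\mathbf{x}$ a generator of $C^-(G_{wx})$ and $\mathbf{y}$ a generator of $C^-(G_{yz})$. This gives a canonical identification of the generating set of $CH^-(H\Gamma)$ with that of $C^-(G_{wx})\ot C^-(G_{yz})$, and I would check that it respects the Maslov and Alexander gradings (which add across the two factors) and the $U$-variable actions.

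The crux is then to show that the differential factors. I would analyze the admissible empty domains contributing to $\partial$ on $CH^-(H\Gamma)$ and prove that each such domain lies in a single coordinate $2$-plane: it is either an empty rectangle in the $wx$-directions with the $yz$-data held fixed, or an empty rectangle in the $yz$-directions with the $wx$-data held fixed. Ruling out the ``mixed'' domains that would couple the two planes is the heart of the argument and the step I expect to be the main obstacle; it rests on the precise admissibility conditions built into the definition of $CH^-$ together with the product structure of the $4$-dimensional grid. Granting this, over $\mathbb{F}_2$ the differential decomposes as $\partial(\mathbf{x}\ot\mathbf{y})=(\partial_{wx}\mathbf{x})\ot\mathbf{y}+\mathbf{x}\ot(\partial_{yz}\mathbf{y})$, so $CH^-(H\Gamma)\cong C^-(G_{wx})\ot C^-(G_{yz})$ as chain complexes.

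With the chain-level isomorphism established, the homology identity follows from the algebraic K\"unneth theorem: over the ground field there are no Tor correction terms, and since $G_{wx}$ and $G_{yz}$ are grid diagrams for $L_1$ and $L_2$, the combinatorial knot Floer theorem gives $H_*(C^-(G_{wx}))\cong HFK^-(L_1)$ and $H_*(C^-(G_{yz}))\cong HFK^-(L_2)$, hence $CH^-(H\Gamma)\cong HFK^-(L_1)\ot HFK^-(L_2)$. Invariance under the moves then comes for free. The first two hypercube moves induce grid commutations and (de)stabilizations on $G_{wx}$ and $G_{yz}$, under which $HFK^-$ is invariant, so each tensor factor and hence the product is unchanged. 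The third move interchanges $G_{wx}$ and $G_{yz}$, so it acts by the transposition isomorphism $HFK^-(L_1)\ot HFK^-(L_2)\cong HFK^-(L_2)\ot HFK^-(L_1)$, which is again an isomorphism. This proves invariance under all three moves and completes the argument.
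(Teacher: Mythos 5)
Your proposal is correct and takes essentially the same route as the paper: a chain-level isomorphism $C^-(H\Gamma)\cong C^-(G_{wx})\ot C^-(G_{yz})$ (Theorem~\ref{compthm}), invariance under hypercube stabilization and commutation via the Manolescu--Ozsv\'ath--Sarkar grid theorem, and the swap move handled by the transposition isomorphism. The one step you flagged as the main obstacle---ruling out ``mixed'' domains coupling the two planes---is vacuous in the paper's setup, since the differential on $C^-(H\Gamma)$ is by definition a count of $wx$- and $yz$-hyperrectangles only, each of which fixes the complementary grid projection.
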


In fact, the filtered chain homotopy type of $(C^-(H\Gamma, \partial^-)$ is an invariant under the hypercube moves.  Therefore it seems likely that that there are other invariants that can be derived from the chain complex itself (see for example \cite{PS2, R} or \cite{PST}).

\medskip

Finally, we provide an example that indicates that hypercube diagrams represent a significant subset of all embedded knotted tori in $\BR^4$:

\begin{Example}
There exists a hypercube diagram that represents two smoothly embedded tori in $\BR^4$ such that the two projected links $L_1$ and $L_2$ of the hypercube diagram are both Hopf links (see Example 3 in Section~\ref{exampleofhypercubes}).
\end{Example}

The key words in the example above are smoothly embedded.  It is easy to find a hypercube diagram that represents an immersed torus with two nontrivial knots for $L_1$ and $L_2$ (see Example 4 in Section~\ref{exampleofhypercubes}).   It is more difficult to find such an example that represents an embedded torus (no intersecting double point circles).  This example is important because it shows that there are hypercubes with nontrivial links $L_1$ and $L_2$ that represent smoothly embedded tori.   Furthermore, as explained later, the method used to search for this example indicates how to find a hypercube diagram  that represents an embedded knotted torus such that $L_1$ and $L_2$ are any two specified knots.

\medskip

We also present an example (of the much more common) hypercube diagram that represents two embedded tori where $L_1$ is the Hopf Link and $L_2$ is the split link of two unknots.  We use hypercube homology to distinguish this example from the example above.

\medskip

The organization of this paper is as follows:  In Section 2 we describe oriented grid diagrams and cube diagrams.  Section 3 carefully defines hypercube diagrams and the moves on hypercubes are described in Section 4.  The construction of the $PL$-torus from a hypercube diagram is described in Section 5 together with the proof of Theorem 1.  Hypercube homology is defined in Section 6 including the Alexander polynomial of the hypercube homology.  In the final section we described the examples of embedded and immersed knotted tori discussed above.

\bigskip\medskip

\noindent {\bf Acknowledgements.} \ \ This paper is the realization of an idea that occurred to the author while listening to Peter Ozsv\'{a}th's talk on combinatorial knot Floer theory at Ron Fintushel's 60th birthday conference, so thank you Peter for the inspiring lecture.  I  also thank Paul Kirk and Ben McCarty for helpful conversations.

\section{Background: Oriented Grid Diagrams and Cube Diagrams}

Three dimensional cube diagrams (cf. \cite{BL})  are the first examples of hypercube diagrams in the literature. To understand hypercube diagrams we first need some basic notions about grid diagrams.

\subsection{Grid Diagrams}  Grid diagrams were introduced by Brunn~\cite{Brunn} over a 100 years ago, and discussed more recently in Cromwell~\cite{Cromwell}.  We will extend this idea to {\em oriented grid diagrams}.

\medskip

First, since over- and undercrossings are recorded in knot and link projections, some knowledge of $\BR^3$ is needed to define an oriented grid diagram.  In this case, that information is the choice of an orientation for $\BR^3$.  Let $\BR^3$ be parameterized using $xyz$-coordinates and orient $\BR^3$ using the right-hand rule, given by the ordered basis $x\wedge y \wedge z$.  In the definition that follows, the right-hand orientation is always chosen, however the definition is stated in such a way that it should be easy to see how to define an oriented grid diagram for the lefthand orientation of $\BR^3$  using any choice of labels for the axes.

\medskip

An  {\it oriented grid diagram}  $G_{xy}$ is defined as an $n\times n$ subset of the Cartesian grid  in the $xy$-plane such that each cell contains either a labeled half-integer point called an $X$ marking,  a labeled half-integer point called a $Y$ marking, or an unlabeled point, arranged so that:

\begin{itemize}
\item each row contains exactly one $X$ marking and exactly one $Y$ marking, and\\

\item each column contains exactly one $X$ marking and exactly one $Y$ marking.
\end{itemize}

\medskip

Next, include oriented segments from $X$ to $Y$ markings that are parallel to the $x$- and $y$-axes in the following manner:
\begin{itemize}
\item Connect each $X$ marking to a $Y$ marking with a segment if the segment is parallel to the $x$-axis.  Orient that  segment to go from $X$ to $Y$.  \\
\item Connect each $Y$ marking to an $X$ marking with a segment if the segment is parallel to the $y$-axis.  Orient that segment to go from $Y$ to $X$.
\end{itemize}
An easy way to remember this convention is that each ``vector'' starting at an $X$ marking is parallel to the $x$-axis and each ``vector'' starting at a $Y$ marking is parallel to the $y$-axis (this will be same convention used in later generalizations). The labels $X$ and $Y$ specify which axes the ``vectors'' are parallel to.

\medskip

So far, this configuration gives a link projection with singularities.  In order to resolve the intersection points and completely specify an oriented link, an orientation for the grid diagram itself also needs to be chosen.  Define an {\em orientation} of the grid diagram by choosing an ordering of the $x$- and $y$-axes.  In this paper the notation $G_{xy}$ specifies the order in the usual manner, ie., $G_{xy}$ specifies the ordered basis $x\wedge y$ ($x$ first, $y$ second), $G_{yx}$ specifies the ordered basis $y\wedge x$.  Note the orientation of $\BR^3$ remains unchanged.  With the ordered basis  for the grid chosen, resolve all intersection points by  declaring all segments parallel to the second axis in the order to be ``overcrossings'', where ``overcrossings'' means the segment is the projection of a link in which the $z$-coordinate at the crossing is greater than the $z$-coordinate of the part of the link that projects to the segment parallel to the first axis.  For example, in the grid diagram $G_{xy}$, all oriented segments starting at $Y$ and parallel to the $y$-axis are overcrossings.

\medskip

\begin{figure}[H]
\includegraphics[scale=1]{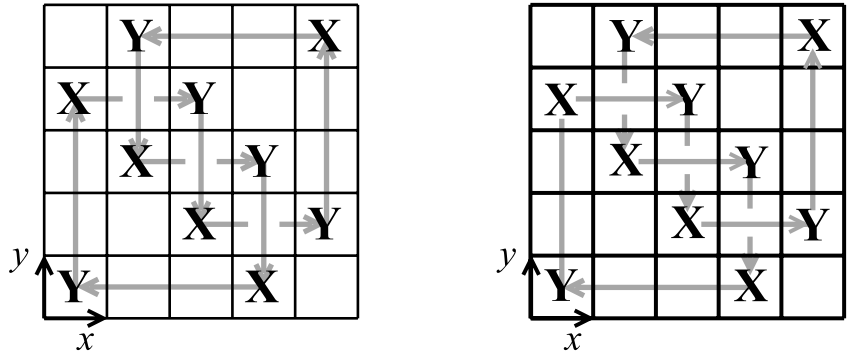}
\caption{\it \small The figure on the left is an $x\wedge y$ oriented grid diagram $G_{xy}$ and the figure on the right is the grid diagram $G_{yx}$, a diagram with the same markings but with the $y\wedge x$ orientation (both use the $x\wedge y\wedge z$-orientation of $\BR^3$).} \label{or}
\end{figure}

Changing the orientation of a  grid diagram changes the link to its mirror.  Also, note that the definition of an oriented grid diagram used in this paper is an improvement over the definition used in \cite{BL} (a careful definition was not needed for that paper).  In that paper, changing the orientation from $G_{xy}$ to $G_{yx}$ also reversed the orientation of the link.  This change matches what is needed for hypercubes.  Finally, set $G_{-yx}=G_{xy}$ following the usual conventions of orientation forms.

\medskip

The definitions for grid diagrams used in the literature (cf. \cite{Audoux}, \cite{DW}, \cite{Karev}, \cite{Levine}, \cite{LOT}, \cite{mos}, \cite{NT}, \cite{PST}, \cite{Vertesi}) usually do not specify axes (and instead refer to horizontal and vertical) and use the letters $O$ and $X$ instead of $X$ and $Y$.  Furthermore, the orientation of the grid is inherited from the orientation of the plane.  But in attempting to generalize this notion to higher dimensions, as the reader will soon observe, the orientation of the grid becomes important.  To connect the definition in this paper to the common definition used in the literature, note that the $x$-axis in $G_{xy}$ is ``horizontal'', the $y$-axis is ``vertical'', and $X$ markings in $G_{xy}$ correspond to $O$ markings, and $Y$ markings in $G_{xy}$ correspond to $X$ markings.

\bigskip

\subsection{Cube Diagrams}  Cube diagrams were first introduced in  Baldridge-Lowrance \cite{BL}.  They are important examples of hypercube diagrams.  Intuitively, a cube diagram can be thought of as an embedding of a knot or link  in a $[0,n]\times [0,n] \times [0,n]$ cube (using $xyz$ coordinates) for some positive integer $n$ such that the link projections of the cube to each axis plane ($x=0$, $y=0$, and $z=0$) are grid diagrams.  The main theorem from that paper generalizes the idea of Reidemeister moves to 3-dimensions:

\begin{theorem}\label{maintheorem1}
Two cube diagrams correspond to ambient isotopic oriented links if and only if one can be obtained from the other by a finite sequence of five cube diagram moves.
\end{theorem}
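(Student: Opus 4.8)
The plan is to prove the two implications separately, handling the ``if'' direction by direct inspection and the harder ``only if'' direction by reducing to the Reidemeister theorem for grid diagrams due to Cromwell.

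For the ``if'' direction, I would verify that each of the five cube diagram moves is realized by an ambient isotopy of the embedded link in $\BR^3$. Each move is a purely local modification of the axis-parallel embedding supported in a small region of the $[0,n]\times[0,n]\times[0,n]$ cube, so it suffices to exhibit, move by move, an explicit isotopy carrying the before-picture to the after-picture. The commutation-type moves correspond to sliding one axis-parallel slab past an adjacent one when no segments obstruct the passage, and the stabilization/destabilization moves correspond to the familiar ``finger'' isotopy that introduces or removes a small kink. In each case the isotopy is evident from the local configuration, so this direction is routine once the moves are drawn.

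The substantive direction is ``only if.'' Here the plan is to lean on the grid-diagram theorem: two grid diagrams represent ambient isotopic oriented links precisely when they are connected by a finite sequence of grid moves (commutation, cyclic permutation, and (de)stabilization). Given two cube diagrams $C_1$ and $C_2$ whose links are ambient isotopic, I would fix a coordinate plane, say the $xy$-plane, and pass to the induced grid projections $G_1$ and $G_2$. Because these present the same link, the grid theorem supplies a finite sequence of planar grid moves taking $G_1$ to $G_2$, and the goal becomes to \emph{lift} this planar sequence to a sequence of cube moves relating $C_1$ to $C_2$.

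The lifting step is the heart of the argument and the part I expect to be the main obstacle. The difficulty is that a cube diagram carries more data than any single projection: the same $xy$-grid diagram can arise from many cube diagrams differing in the heights of their segments, and a grid move performed blindly on the $xy$-projection will in general destroy the grid-diagram property of the $yz$- and $zx$-projections. I would therefore proceed in two stages. First, I would show that any two cube diagrams sharing a common $xy$-projection are connected by cube moves acting transverse to that plane, which pins down the height data up to the allowed moves. Second, for each planar grid move in the Cromwell sequence I would construct a corresponding sequence of cube moves that realizes it on the $xy$-projection while restoring the grid structure of the two remaining projections by compensating moves in the perpendicular directions. Verifying, move type by move type, that the five cube moves are exactly rich enough to perform these coordinated adjustments---and that the compensations can always be arranged---is the technical core of the theorem. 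Finally, I would concatenate the lifted sequences with the height-correcting moves to connect $C_1$ to $C_2$, completing the proof.
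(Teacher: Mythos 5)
A point of context first: the paper you were given does not prove this theorem at all---it is quoted as background from Baldridge--Lowrance \cite{BL}, where the actual proof lives. So there is no in-paper argument to compare yours against; what can be judged is whether your plan is sound. Your skeleton (easy direction by exhibiting an ambient isotopy for each of the five moves; hard direction by projecting to grid diagrams, invoking the Cromwell--Dynnikov theorem, and lifting the resulting planar sequence to cube moves) is the right one, and it is essentially the strategy followed in \cite{BL}.

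There are, however, two genuine problems. The first is concrete and would make the argument fail as written: you propose to lift the grid move set \emph{including cyclic permutation}. Cyclic permutation does not lift to cube diagrams---this very paper warns (Section 4.1) that cyclically permuting a cube diagram can easily produce data violating the crossing conditions, and the result need not even represent the same knot type. This is precisely why the paper works with a move set consisting of stabilization plus an \emph{extended} commutation move (allowing the projection of one segment to be nested inside the other), having first shown in \cite{BL} that cyclic permutation is a consequence of those moves at the grid level. Your lifting scheme must be run against that move set, not Cromwell's original one. Second, everything you label the ``technical core''---the lemma that two cube diagrams with the same $xy$-projection are cube-move equivalent, and the move-by-move construction of compensating cube moves that restore the grid structure of the $yz$- and $zx$-projections---is named but never carried out, and that is where all the content of the theorem sits; note that your stage (a) lemma is not obviously easier than the theorem itself, since two cube diagrams with equal $xy$-projections already present the same link. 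As it stands, your proposal is a correct plan of attack with its central lemmas unproved, not a proof.
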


A cube diagram move is a special ambient isotopy of a link that takes one cube diagram to another cube diagram.   Cube diagram moves differ from other isotopy moves (like triangle moves)  because there are only finitely many instances of only five cube moves that can be performed on any given cube diagram. Hence, one has the desired control similar to that of Reidemeister or Markov moves, but in $3$ dimensions instead of $2$.  Because cube diagrams are inherently 3-dimensional, they can be used to connect differential topology to algebraic topology using the combinatorics of the cube diagram moves.

\medskip

While cube diagrams project to grid diagrams, grid diagrams rarely lift to cube diagrams (which may be the reason they were not discovered earlier). For example, in  Baldridge-McCarty \cite{note} we show that only about 20\% of size 8 grid diagrams of nontrivial knots  lift to cube diagrams, and that this percentage decreases as the size of the grid increases.  This sparsity of cube diagrams leads to strictly stronger invariants than those defined using grid diagrams.  For example, the {\em cube number} of a link is the minimum size cube diagram that represents that link. Similarly, the {\em grid number} (or {\em arc index}) of a link is the minimum size grid diagram that represents that link.  Clearly, the grid number of a link is equal to the grid number of its mirror image: $\mbox{arc index}(G_{xy})=\mbox{arc index}(G_{yx})$ from above. McCarty \cite{private} has shown that cube number can distinguish a knot from its mirror image while grid number cannot. For example, the cube number of the left-handed trefoil is five while the cube number of the right-handed trefoil is seven. Additionally, McCarty in \cite{private} and  \cite{MC2} showed that a modified version of cube number can distinguish between Legendrian knots with the same underlying knot type.

\bigskip

The intuitive definition of cube diagrams above doesn't keep track of orientations of the projected grid diagrams nor how to label the markings.  Both are important in defining the projections to the coordinate axes.  Let $n$ be a positive integer and  let the cube $[0,n]\times [0,n]\times [0,n] \subset \mathbb{R}^3$ be thought of as $3$-dimensional Cartesian grid, i.e., a grid with integer valued vertices with axes $x$, $y$, and $z$.  The number $n$ is called the {\em cube number} or {\em size}.  A \textit{flat} is any right rectangular prism with integer vertices in the cube such that there are two orthogonal edges of length $n$ with the remaining orthogonal edge of length $1$.  A flat with an edge of length 1 that is parallel to the $x$-axis, $y$-axis, or $z$-axis is called an {\em $x$-flat}, {\em $y$-flat}, or {\em $z$-flat} respectively.

\medskip

The embedding of a link using markings in the cube is similar to that grid diagrams.  A marking is a labeled point in $\BR^3$ with half-integer coordinates.  Mark unit cubes in the $3$-dimensional Cartesian grid with either an $X$, $Y$, or $Z$ such that the following {\em marking conditions} hold:

\begin{itemize}
    \item each flat has exactly one $X$, one $Y$, and one $Z$ marking;\\

    \item the markings in each flat forms a right angle such that each ray is parallel to a coordinate axis;\\

    \item for each $x$-flat, $y$-flat, or $z$-flat, the marking that is the vertex of the right angle is an $Z, X,$ or $Y$ marking respectively.
\end{itemize}

\medskip

Like grid diagrams, an oriented link can be embedded into the cube by connecting pairs of markings with a line segment whenever two of their corresponding coordinates are the same.   Each line segment is oriented to go from an $X$ to a $Y$, from a $Y$ to a $Z$, or from a $Z$ to an $X$.   The marking conditions are set up so  the ``vector'' starting at an $X$ marking (line segment going from $X$ to $Y$) is always parallel to the $x$-axis, i.e., an ``$X$ marking vector'' points in the $x$-direction. Similarly, the ``vectors'' at $Y$ and $Z$ markings point in the $y$-direction and $z$-direction respectively.  This convention is analogous to the ``vectors'' described in defining grid diagrams.

\medskip

In $xyz$-coordinates, denote the projection to the $xy$-plane by $\pi_{xy}: \BR^3\ra \BR^2$ given by $\pi_{xy}(x,y,z)=(x,y)$.  Similarly, define $\pi_{yz}(x,y,z)=(y,z)$ and $\pi_{zx}(x,y,z)=(z,x)$.  Arrange the markings in the cube as above so that the following {\em crossing conditions} hold:
\begin{itemize}
\item At every ordinary double point intersection of the link in the $\pi_{xy}$-projection, the segment parallel to the $x$-axis has smaller $z$-coordinate than the segment parallel to the $y$-axis in $\BR^3$.\\

\item At every ordinary double point intersection of the link in the $\pi_{yz}$-projection, the segment parallel to the $y$-axis has smaller $x$-coordinate than the segment parallel to the $z$-axis in $\BR^3$.\\

\item At every ordinary double point intersection point of the link in the $\pi_{zx}$-projection, the segment parallel to the $z$-axis has smaller $y$-coordinate than the segment parallel to the $x$-axis in $\BR^3$.
\end{itemize}

\medskip

If the cube and data structure satisfies both marking and crossing conditions, then it is called a {\it cube diagram}.  We will denote the cube diagram by $\Gamma$ or $\Gamma(L)$ where $L$ is the link that it represents.  Note that the cube itself is canonically oriented by the standard orientation of $\BR^3$---the right hand orientation $x\wedge y\wedge z$.  There is a corresponding definition for left-handed cube diagrams.

\bigskip

\begin{figure}[H]
\includegraphics[scale=.3]{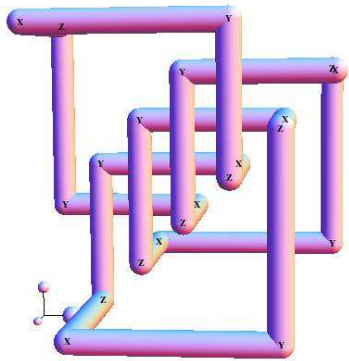}
\caption{Example of a cube diagram of the $8_{19}$ knot. } \label{trefoil}
\end{figure}

\bigskip

Clearly,  the data structure of a cube diagram can be naturally projected onto an oriented grid diagram in a way that respects orientation, overcrossings, and markings.  For example, the projection $\pi_{xy}:\Gamma \ra G_{xy}$ takes $Y$ markings in the cube to $Y$ markings in $G_{xy}$ and takes the $X$ and $Z$ markings in the $\Gamma$ to $X$ markings in $G_{xy}$.  The orientation of the link in $\Gamma$ are the same with respect to the orientation in $G_{xy}$.  Finally, the crossing data for the grid diagram $G_{xy}$ states that, at a crossing, the segment parallel to the $y$-axis has a greater $z$-coordinate than the segment parallel to the $x$-axis.  This matches the first  crossing conditions above.  The same facts are also true for the other two projections, $\pi_{yz}:\Gamma\ra G_{yz}$ and $\pi_{zx}:\Gamma \ra G_{zx}$.  Figure~\ref{goodex2} shows the different projections of a cube diagram to $G_{xy}, G_{yz}$ and $G_{zx}$.

\begin{figure}[H]
\includegraphics[scale=.3]{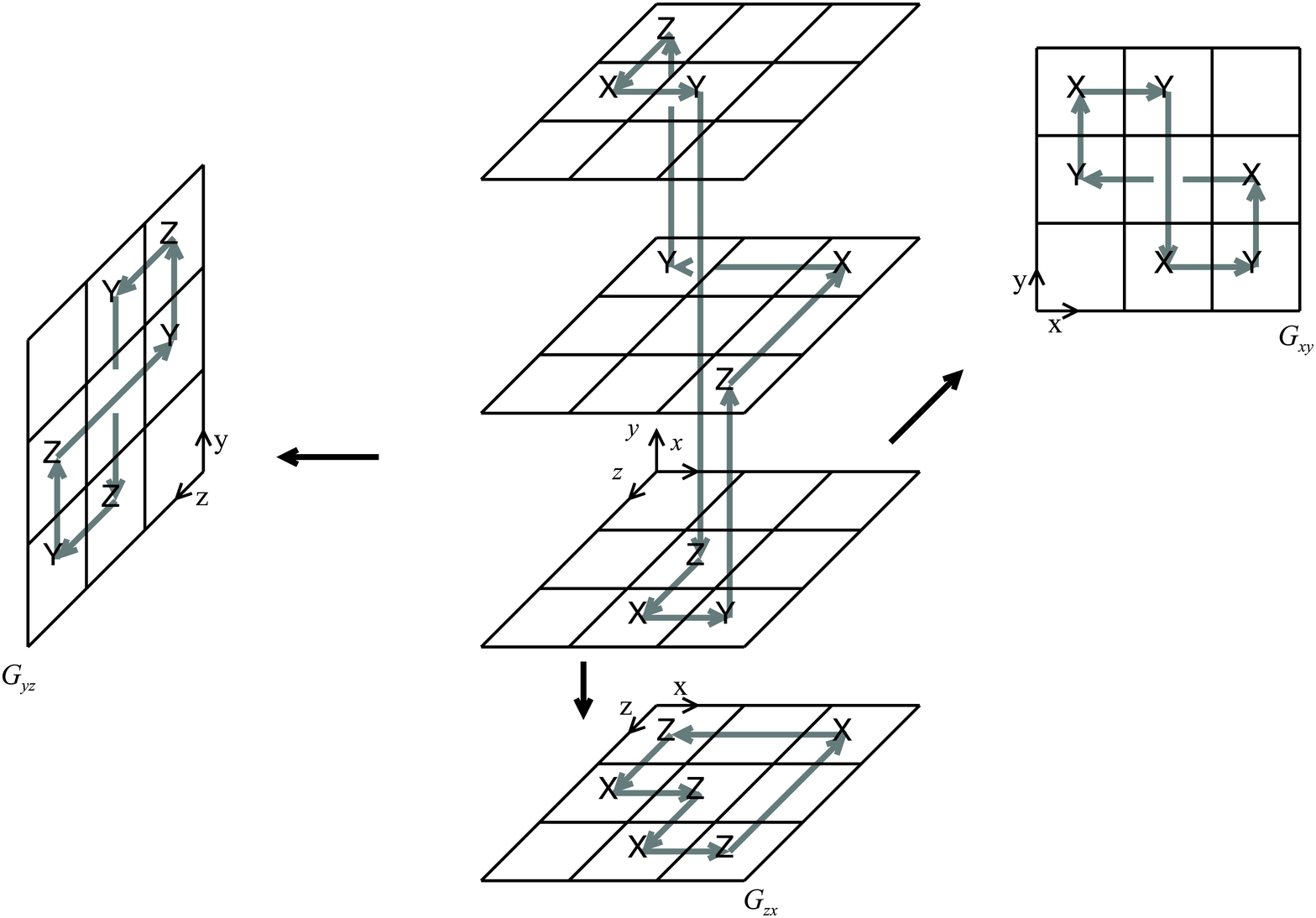}
\caption{\small \it A cube diagram and its three projections to oriented grid diagrams $G_{xy}$, $G_{yz}$, and $G_{zx}$. } \label{goodex2}
\end{figure}

\bigskip

\bigskip
\section{Hypercube Diagrams in 4-dimension}
\label{hypercubediagrams}
\bigskip

Hypercube diagrams in 4-dimensions are defined similarly to cube diagrams.  There is one important difference:  Given a hypercube in $\BR^4$ using a $wxyz$ coordinate system, the first thought one may have is to define a hypercube diagram structure by requiring each of the 4 projections $\pi_w,\pi_x,\pi_x,\pi_y:\BR^4\ra\BR^3$ be cube diagrams.  This use of projections turns out to be too strong of a condition---doing so forces all four cube diagrams to be cube diagrams of the same link.  Hypercube diagrams in $4$-dimensions should capture more information about tori than a link in $3$-dimensions.    The solution is to project the hypercube to $2$-planes instead.  Of course, the $2$-plane projections should be oriented grid diagrams.

\medskip

Recall from Baldridge-Lowrance \cite{BL} that only two grid diagram projections of a cube diagram are needed to reconstruct the original cube diagram.  This idea can be generalized to hypercubes in $4$ dimensions: there is a natural way to select two of the projections ${\pi_w,\pi_x,\pi_y,\pi_z:\BR^4\ra\BR^3}$ (where each map projects out the coordinate indicated) and two projections of each of those data structures to $2$-planes and require all four $2$-plane projections to be oriented grid diagrams (see Figure~\ref{projecthypercube}).  None of the projections from $\BR^4\ra \BR^3$ are required to be cube diagrams.  Also note in the picture below that, for example, one can also get a projection of the hypercube to the $zy$-plane by first projecting out the $x$-coordinate using $\pi_x$, and then projecting out the $w$-coordinate in $\BR^3$.  We do not require that projection of the hypercube (which is often different than $G_{yz}$) to be an oriented grid diagram.

\begin{figure}[H]
\includegraphics[scale=1]{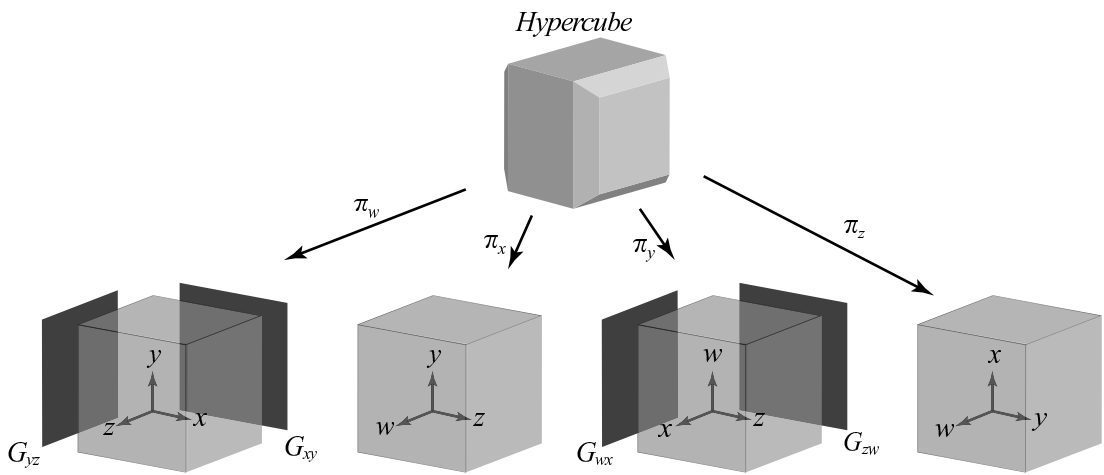}
\caption{\small \it The projections to the four oriented grid diagrams $G_{xy}$, $G_{yz}$, $G_{wx}$, and $G_{zw}$. } \label{projecthypercube}
\end{figure}

\medskip

The definition of a hypercube diagram codifies the discussion above with a data structure that mimics  the definition of a cube diagram.  Like a cube diagram, let $n$ be a positive integer and  let the hypercube $[0,n]\times [0,n]\times [0,n] \times [0,n] \subset \mathbb{R}^4$ be thought of as a $4$-dimensional Cartesian grid, i.e., a grid with integer valued vertices with axes $w$, $x$, $y$, and $z$.  Orient $\BR^4$ with the orientation $w\wedge x\wedge y\wedge z$.

\medskip

 A \textit{flat} is any right rectangular $4$-dimensional prism with integer valued vertices in the hypercube such that there are two orthogonal edges at a vertex of length $n$ and the remaining two  orthogonal edges are of length $1$.  Name flats by the axes parallel to the two orthogonal edges of length $n$.  For example, a $yz$-flat is a flat that has a face that is an $n\times n$ square that is parallel to the $yz$-plane.

\medskip

Similarly, a {\em cube} is any right rectangular $4$-dimensional prism with integer vertices in the hypercube such that there are three orthogonal edges of length $n$ at a vertex with the remaining orthogonal edge of  length $1$.  Name cubes by the three  edges of the cube of length $n$.  See Figure~\ref{cubesandflats} for examples.

\medskip

\begin{figure}[H]
\includegraphics[scale=1]{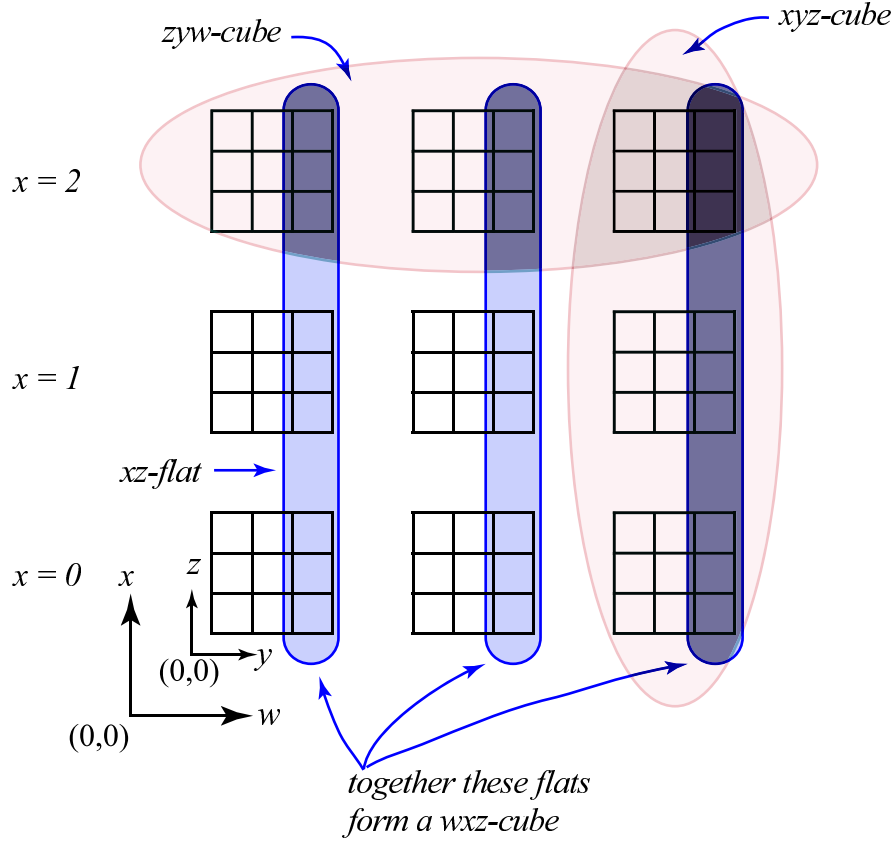}
\caption{\small \it  A schematic for displaying a hypercube diagram.  The outer $w$ and $x$ coordinates indicate the ``level'' of each $yz$-flat.  The inner $y$ and $z$ coordinates start at $(0,0)$ for each of the nine $yz$-flats. With these conventions understood, it is then easy to display $xz$-flats, $zyw$-cubes, $xyz$-cubes, $wxz$-cubes, etc. } \label{cubesandflats}
\end{figure}

\medskip

Like cube diagrams, a marking is a labeled point in $\BR^4$ with half-integer coordinates.  Mark unit hypercubes in the $4$-dimensional Cartesian grid with either a $W$, $X$, $Y$, or $Z$ such that the following {\em marking conditions} hold:

\begin{itemize}
 \item each cube has exactly one $W$, one $X$, one $Y$, and one $Z$ marking;\\

 \item each cube has exactly two flats containing exactly 3 markings in each; \\

 \item for each flat containing exactly 3 markings, the markings in that flat form a right angle such that each ray is parallel to a coordinate axis;\\

    \item for each flat containing exactly 3 markings, the marking that is the vertex of the right angle is $W$ if and only if the flat is a $zw$-flat, $X$ if and only if the flat is a $wx$-flat, $Y$ if and only if the flat is a $xy$-flat, and $Z$ if and only if the flat is a $yz$-flat.

\end{itemize}

\medskip

Observe that the 4th condition rules out the possibility of either $yw$-flats or a $zx$-flats with three markings.    This restriction will become important later when representing  tori  as hypercube diagrams.

\medskip

The conditions above on the markings ensure that markings can be connected together by segments that go from $W$ to $X$, from $X$ to $Y$, from $Y$ to $Z$, and from $Z$ to $W$.  Furthermore, these segments join together to form embedded loops in the $4$-dimensional cube.  Of course these loops are not ``linked'' in the literal sense of embedded $S^1$'s in $\BR^4$, but after the crossing conditions are imposed below, it will be clear that the embedded circles can't always be ``pulled apart'' using hypercube diagram moves (which are defined in the next section).

\medskip

\begin{definition} In a hypercube $H\Gamma$, call the markings together with the segments between the markings the {\em hyperlink}.
\end{definition}

\medskip

Generating a set of markings that satisfies the marking conditions is actually quite easy and quick. First, like oriented grid diagrams and cube diagrams from before, the segment that goes from an $X$ marking to a $Y$ marking is always parallel to the $x$-axis, so all ``vectors'' starting at the $X$ markings are parallel to the $x$-axis.  The same is true for the other segments: segments that start at $W$ markings are parallel to the $w$-axis, and so on.  With that understood, choosing a set of markings that satisfy the marking conditions is straightforward.  Start with an empty cube diagram schematic (see Figure~\ref{cubesandflats} above) and

\begin{enumerate}
\item Choose one $yz$-flat from each row of $yz$-flats so no two chosen $yz$-flats are in the same column.\\

\item Place one $W$ marking in each chosen $yz$-flat so that no two $W$ markings are in the same relative row or column in the $yz$-flats.\\

\item Choose an $X$ marking for a given $W$ marking by selecting a different column of $yz$-flats and putting the $X$ marking in the same $yz$-flat row as the $W$ marking  and in the same relative row and column as the $W$ marking ($W$ goes to $X$ parallel to the $w$-axis), making sure to have only one $X$ marking per column.\\

\item For each column, put a $Y$ marking in the same $yz$-flat as the $W$ marking and in the same relative position as the $X$ marking in the column.\\

\item In each $yz$-flat that has a $W$ and $Y$ marking, put a $Z$ marking in a square that is in the same column of the $Y$ marking and the same row of the $W$ marking.\\

\end{enumerate}

The schematic produced by the steps above has several $yz$-flats that contain $Y$, $Z$, and $W$  markings, and other $yz$-flats that contain a single $X$ marking.  Figure~\ref{trefoilhypercube} has  an example of a hypercube diagram schematic representing a standard torus on the left and an example of a schematic of a ``Trefoiled'' torus on the right.

\begin{figure}[H]
\includegraphics[scale=1]{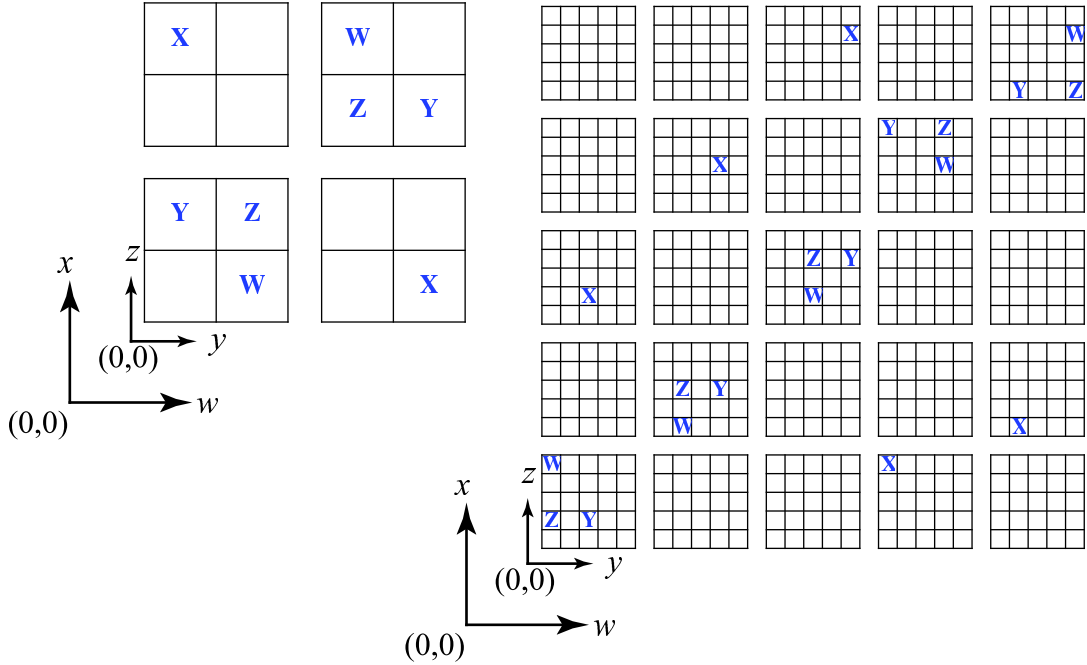}
\caption{\small \it  Examples of hypercubes.  On the left is the simplest hypercube possible (a torus).   On the right is a hypercube diagram that projects to a Trefoil grid diagram (seen by projecting the $yz$-flats down the diagonal). } \label{trefoilhypercube}
\end{figure}

\medskip

The crossing conditions for a hypercube are easy to describe but hard to check (without the aid of a computer).  First, consider two projections of the hypercube to $3$-space $\pi_w:\BR^4 \ra \BR^3$ and $\pi_y:\BR^4\ra\BR^3$, including the projections of the markings.  For the $\pi_w$ projection, each $W$ and $X$ marking related by a segment project to the same point, call that point an $X$ marking in $\BR^3$.  Similarly, the $\pi_y$ projection maps each $Y$ marking and $Z$ marking related by a segment to the same point, call it a $Z$ marking.  All remaining markings are mapped  to their own point in $\BR^3$ for each projection.  Name the resulting data structures (i.e., the image of the hypercube:   $[0,n]\times[0,n]\times[0,n]\subset \BR^3$ together with the projected markings) by $C_{xyz}$ for the $\pi_w$ projection and $C_{wxz}$ for the $\pi_y$ projection.  For example, the $\pi_w$ projection $C_{xyz}$ for the ``Trefoil'' example in Figure~\ref{trefoilhypercube}  can be found by shifting all the markings to the left-most column, remembering to replace the ``$W-X$'' markings with $X$ markings.

\medskip

We carefully described the definitions of oriented grid diagrams and cube diagrams to make the next the statement obvious:  The data structures $C_{xyz}$ and $C_{wxz}$ both satisfy the marking conditions of cube diagrams.  Recall that for grid diagrams and cube diagrams, the segment that begins at a marking is always parallel to the axis defined by the marking.    So in the case of $\pi_w:\BR^4\ra C_{xyz}$, the $W$ marking, $X$ marking, and the segment going from $W$ to $X$ all get projected to a single point, the $X$ marking.  The segment from the $X$ marking continues to begin at the new $X$ marking and is parallel to the $x$-axis in the projection.  Likewise, the remaining $Y$ and $Z$ markings and the segments that begin from them continue to remain parallel to axes defined by the markings in the projection.

\medskip

It is not true, in general, that a randomly chosen set of markings in the hypercube will project to data structures $C_{xyz}$ and $C_{wxz}$ that also satisfy the cube diagram crossing conditions (in fact, it is  exceedingly rare---cf. \cite{note} for calculations about the sparsity of cube diagrams).  Nor is it advantageous to require that the data structures $C_{xyz}$ and $C_{wxz}$ to be cube diagrams---since they both share a projection to the $zx$-plane, they would share a common grid diagram up to orientation, which means both data structures would essentially describe the same link.

\medskip

Of the six $2$-planes one can project a hypercube to, $wx$, $yz$, $zw$, $xy$, $yw$ and $zx$, two of the planes are ruled out by the hypercube marking conditions (cubes cannot have $yw$- or $zx$-flats with three markings).  This leaves four possible projections to $2$-planes.  We require all of these projections to be oriented grid diagrams.

\medskip

Given a data structure of markings in a hypercube, the data structure satisfies the {\em crossing conditions} if

\begin{itemize}
\item For the data structure $C_{xyz}$ given by the projection $\pi_w:\BR^4\ra\BR^3$, the images of the projections $\pi_x:C_{xyz}\ra G_{yz}$ and $\pi_z: C_{xyz} \ra G_{xy}$  are oriented grid diagrams.\\

\item For the data structure $C_{wxz}$ given by the projection $\pi_y:\BR^4\ra\BR^3$, the images of the projections $\pi_z:C_{wxz}\ra G_{wx}$ and $\pi_w:C_{wxy}\ra G_{xy}$ are oriented grid diagrams.
\end{itemize}

\medskip

If the hypercube and data structure satisfies both marking and crossing conditions, then it is called a {\it hypercube diagram} and denoted by $H\Gamma$.  We may also denote the hypercube diagram by $H\Gamma(L_1,L_2)$ where $L_1$ is the link given by the grid diagram $G_{wx}$ and $L_2$ is the link given by the grid diagram $G_{yz}$.   For the rest of this paper, we will often drop $C_{xyz}$ and $C_{wxz}$ and just refer to oriented grid diagrams $G_{wx}$, $G_{yz}$, $G_{xy}$, and $G_{zw}$ of $H\Gamma$ without reference to the cubes they are projections of.

\medskip

The reader may wonder why the projections $\pi_x,\pi_z:\BR^4\ra \BR^3$ were ignored.  Note that one could demand that, for example, $\pi_w\circ \pi_x:H\Gamma \ra G_{zy}$ be a grid diagram as well  (it is not true that $\pi_w\circ\pi_x$ and $\pi_x \circ \pi_w$ give rise to the same data structures, or even that both are grid diagrams when one of them is already known to be a grid diagram).  The reason the grid diagrams $G_{wx}$, $G_{yz}$, $G_{xy}$, and $G_{zw}$ were chosen is that we want a theory for {\em oriented} tori and these grid diagrams are compatible with the standard right hand orientation.  We could have set up an analogous theory instead using the oriented grid diagrams $G_{xw}$, $G_{zy}$, $G_{yx}$, and $G_{wz}$ for the data structures $C_{zyw}$ and $C_{yxw}$ (see Figure~\ref{projecthypercube}).

\bigskip

\bigskip
\section{Hypercube moves in 4-dimension}
\label{hypercubemoves}
\bigskip

There are three hypercube moves on hypercubes that take a valid hypercube to another valid hypercube (i.e., after each move, the projections to $G_{wx}$, $G_{yz}$, $G_{xy}$, and $G_{zw}$ are valid oriented grid diagrams).  To understand hypercube moves, we quickly review the two different types of moves on a grid diagram.

\subsection{Grid diagram moves}  Following Cromwell \cite{Cromwell} (cf. Dynnikov~\cite{Dynnikov}), any two grid diagrams for the same link can be connected by a sequence of the following  two elementary moves:

\bigskip

\noindent {\bf Stabilization.} A stabilization move adds an extra row and column to a grid diagram.  Suppose $G_{xy}$ is an oriented grid diagram with grid number $n$.  Let $X$ and $Y$ be two marking that form a line parallel to the $x$-axis.   Split the row they are in into two new rows, with $X$ in one new row and $Y$ in the other so that the $x$-coordinate of both markings remain the same as in $G_{xy}$.  Also add a new column to the diagram adjacent to the $X$ or the $Y$ marking  such that it is between the two markings, and then place two new markings $X'$ and $Y'$ in the column so that $X'$ occupies the same row as $Y$ and $Y'$ occupies the same row as $X$. See Figure \ref{stab}.  Destabilization is the inverse of stabilization.

\begin{figure}[H]
\includegraphics[scale=.27]{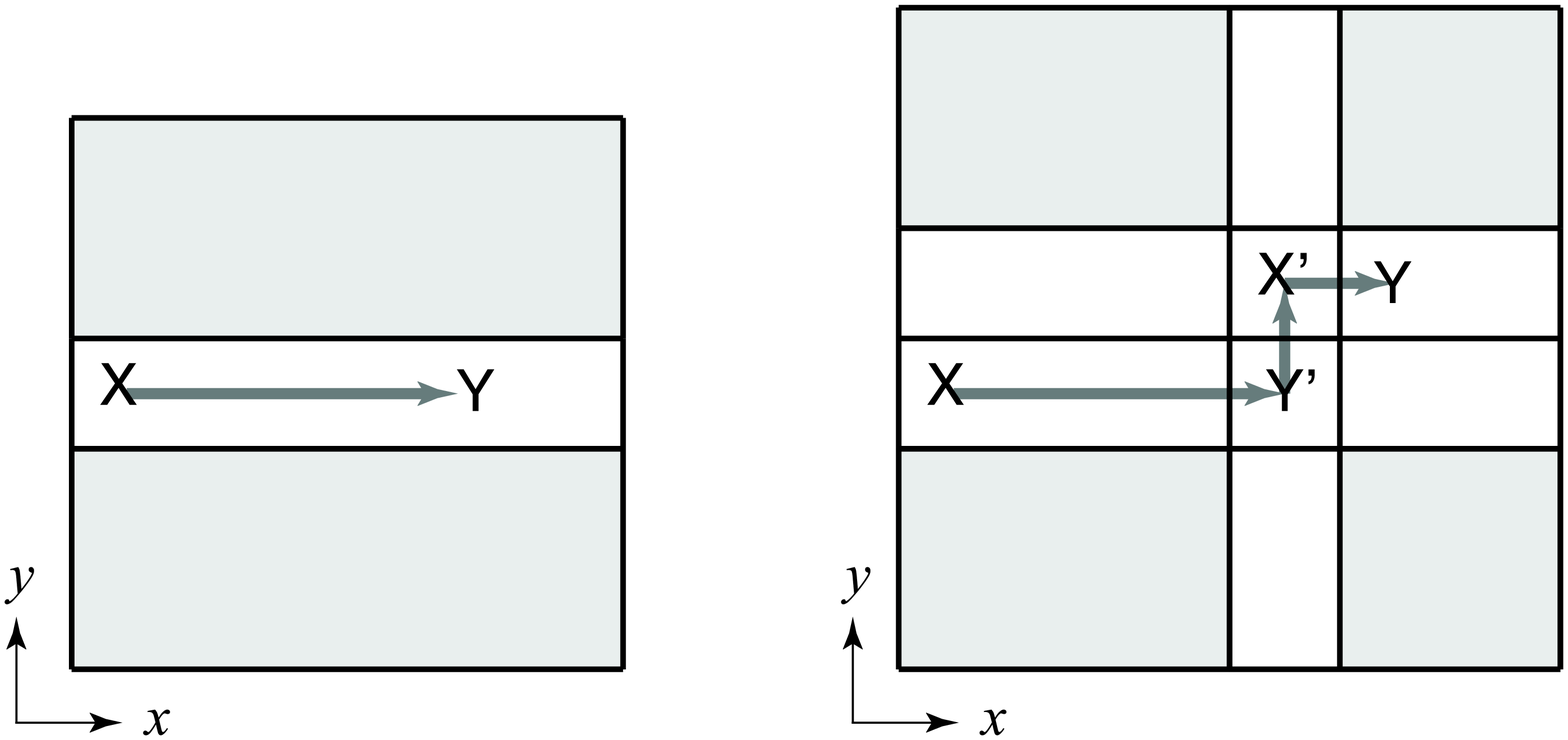}
\caption{A {\bf stabilization move} replaces the segment on the left by the three segments on the right, which increases the size of the grid diagram by one.} \label{stab}
\end{figure} 

\bigskip

\noindent {\bf Commutation.} Consider two adjacent rows in the grid diagram. In each row parallel to the $x$-axis, project the line segment connecting the $X$ and $Y$ to the $x$-axis. If the projections of the segments are disjoint, share exactly one point, or if the projection of one segment is entirely contained in the projection of the other, then the two rows can be interchanged. There is a similar move for columns.
\begin{figure}[H]
\includegraphics[scale=.27]{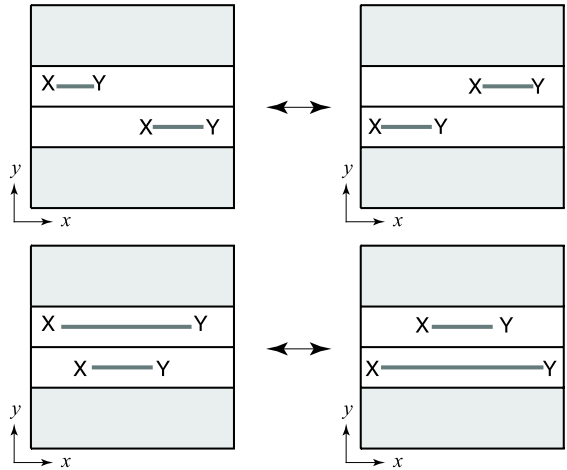}
\caption{A {\bf commutation move} interchanges adjacent rows (or columns) when the markings are situated as above.} \label{commutation}
\end{figure}

\medskip

In the literature there is also a cyclic permutation move.  The cyclic permutation move can be shown to be a consequence of the moves above (cf. \cite{BL}).  We definitely need the second commutation move instead of a cyclic permutation move for hypercubes---it can be shown, for example, that a cyclic permutation of a cube diagram can easily give cubes that do not satisfy the crossing conditions (after permuting, the resulting data and markings may not even represent the same knot type).  By using the second commutation move, we can keep our moves to the grid $G_{xy}$ and do not need to consider the grid to be on a torus, as it is often presented in the literature.

\subsection{Hypercube moves}  The first two moves on a $4$-dimensional hypercube $H\Gamma$ are just generalizations of the grid diagram moves and cube diagram moves. The third move, called a swap move, is new and due to the $4$-dimensional nature of the hypercube.  A problem with the commutation move is that, while the move is easy to describe abstractly, it can be difficult in practice to determine whether or not a given commutation move can be done or not.  In this section we will only describe the moves abstractly and provide a few examples to help the reader see that the moves make sense on a hypercube.  Future work in the same vein as in \cite{CS} needs to be done to give a few simple pictures like Figures~\ref{stab} and \ref{commutation} to determine when hypercube commutation can be done on schematics like those shown in Figure~\ref{trefoilhypercube}.

\bigskip

\noindent {\bf Hypercube Stabilization.}  A cube stabilization move increases the size of the hypercube diagram by 1.   The basic idea is to replace a marking, say $W$, with a new chain of markings $W\ra X\ra Y\ra Z \ra W$ where the lengths of all the segments in the chain are length 1.  The process of stabilizing is easy:  consider the segment leaving $W$ parallel to the $w$-axis.   If that segment is pointing in the positive direction, then insert  an $xyz$-cube, a $zyw$-cube, a $wxz$-cube and a $yxw$-cube next to the $W$ marking, each time on the side further away from the coordinate axes.  If the segment from $W$ points in the negative direction, insert the cubes on the side of the $W$ marking closer to the coordinate axes.    The intersection of these four cubes is a unit cube, place a new $W$ marking in that unit cube.  There is then a unique way to put new $X$, $Y$, and $Z$ markings into the new cube.  The result of this process is a stabilization of the hypercube $H\Gamma$.  Figure~\ref{hyperstab} shows what this process looks like in a hypercube schematic.  Figure~\ref{hyperstab2} shows how the torus is represented by the hypercube diagrams before and after a stabilization.  Destabilization is the inverse of this process.

\medskip

\begin{figure}[H]
\includegraphics[scale=1]{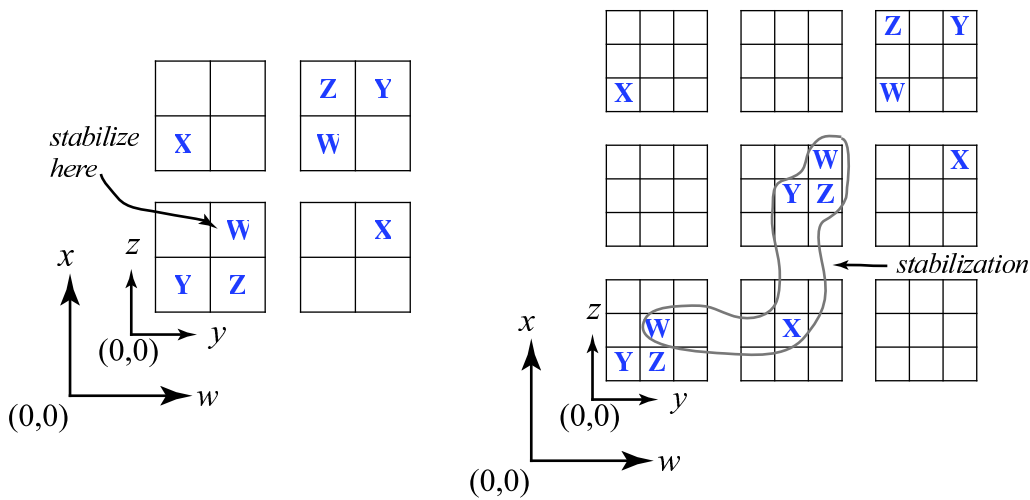}
\caption{An example of a {\bf hypercube stabilization move} at the $W$ marking in the picture on the left.  On the left is a hypercube diagram schematic of the resulting stabilization. } \label{hyperstab}
\end{figure}

A hypercube stabilization of $H\Gamma$ induces grid stabilizations in each of the four projections $G_{wx}$, $G_{yz}$, $G_{xy}$, and $G_{zw}$.  Therefore the links represented by $G_{wx}$ and $G_{yz}$ before a hypercube stabilization are the same links as after the stabilization.

\begin{figure}[H]
\includegraphics[scale=1]{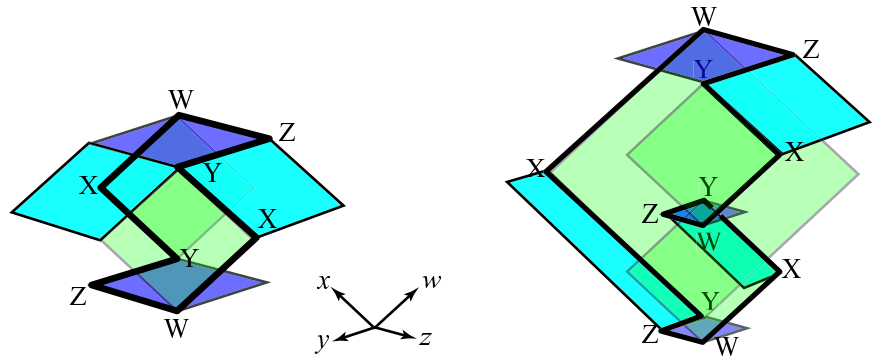}
\caption{A partial picture of the torus before and after the {\bf hypercube stabilization move} at the $W$ marking of the hypercube diagram schematic in Figure~\ref{hyperstab}.   See Figures~\ref{std_torus} and \ref{std_torus2} for the complete picture of the standard torus on the left. } \label{hyperstab2}
\end{figure}

\bigskip

\noindent {\bf Hypercube Commutation.}  A hypercube commutation move can be described as follows:  commute any two adjacent cubes in the hypercube and if the resulting data structure is also a hypercube, call that move a {\em hypercube commutation move}.  Figure~\ref{hypercommutation} shows examples of adjacent cubes that can be commuted in a hypercube.

\begin{figure}[H]
\includegraphics[scale=1]{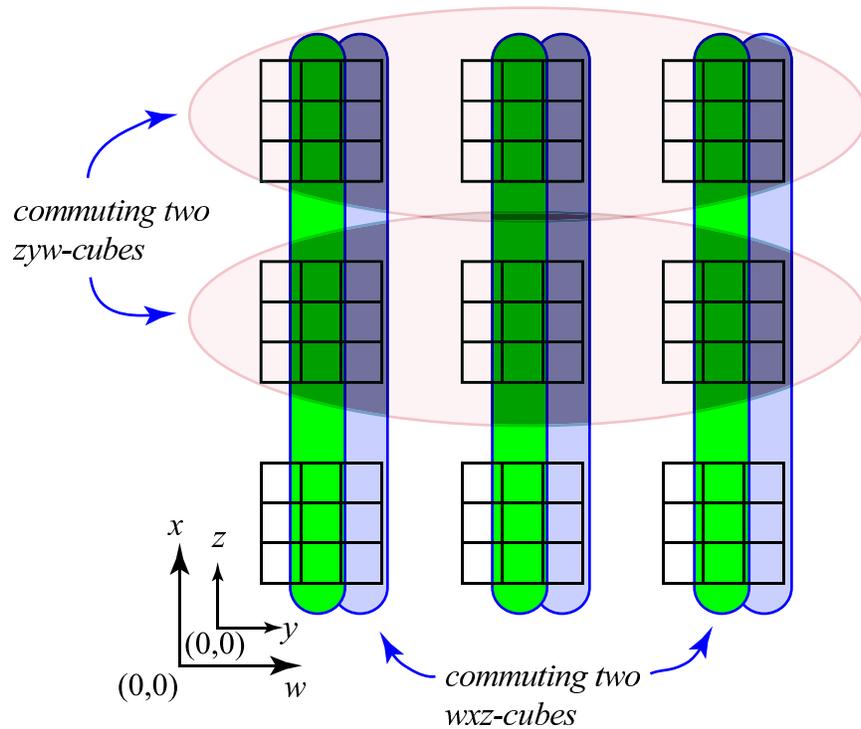}
\caption{Examples of ways to commute cubes in a hypercube.} \label{hypercommutation}
\end{figure}
\medskip

More specifically, a hypercube commutation of two adjacent cubes will change two of the four oriented grid diagrams projections $G_{wx}, G_{yz}, G_{xy}, G_{zw}$ by a grid commutation move.  There are two  configurations in Figure~\ref{commutation} for commutation moves in the grid.   Therefore we can think of hypercube commutation moves as only those moves that look like any combination of these two configurations in two of the projections $G_{wx}, G_{yz}, G_{xy}, G_{zw}$.  Note that  a hypercube commutation move really is a $4$-dimensional move---you cannot arbitrarily commute two adjacent columns parallel to the $x$-axis in $G_{wx}$ and two adjacent rows parallel to the $z$-axis in $G_{zw}$ and expect that the new set of four oriented grid diagram projections to even correspond to a data structure that satisfies the marking conditions.

\medskip

While the work on easily identifying hypercube commutation configurations has yet to be done, it is easy to see the results of a successful cube commutation.  For example, Figure~\ref{hypercommutation_ex} is the result of doing two hypercube commutation moves on the hypercube schematic on the right in Figure~\ref{hyperstab} above.  To get the hypercube diagram schematic below, first commute the first two rows of $zyw$-cubes and then commute the first two columns of $xyz$-cubes.  The picture next to the schematic is of the resulting torus.

\begin{figure}[H]
\includegraphics[scale=1]{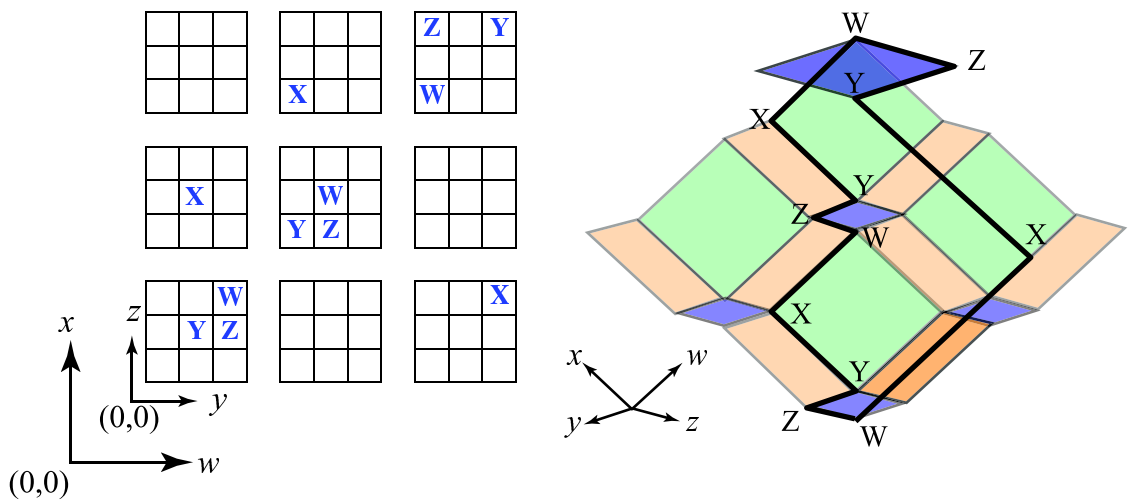}
\caption{The hypercube on the right of Figure~\ref{hyperstab} after two commutations.  To see this, first  commute the $x=0$ and $x=1$ $zyw$-cubes, then commute $w=0$ and $w=1$ $xyz$-cubes.  A {\em partial} picture of the resulting torus is on the right (see Figure~\ref{step4} for the complete picture). } \label{hypercommutation_ex}
\end{figure}
\medskip

Because a hypercube commutation move induces grid commutations in two of the four oriented grid projections $G_{wx}, G_{yz}, G_{xy}, G_{zw}$, the links associated to those projections remain unchanged.

\bigskip
\noindent {\bf Hypercube Swap.}  The simplest hypercube swap move switches the oriented grid diagrams $G_{wx}$ and $G_{yz}$ and switches the oriented grid diagrams $G_{xy}$ and $G_{zw}$.   The map that does this is simply $SW:\BR^4\ra\BR^4$ given by $SW(w,x,y,z)=(y,z,w,x)$ on the level of sets.  On the markings, $SW$ sends $W \ra Y$ and $X\ra Z$ and vice versa.  Figure~\ref{hyperswap} shows an example of a swap.

\begin{figure}[H]
\includegraphics[scale=1]{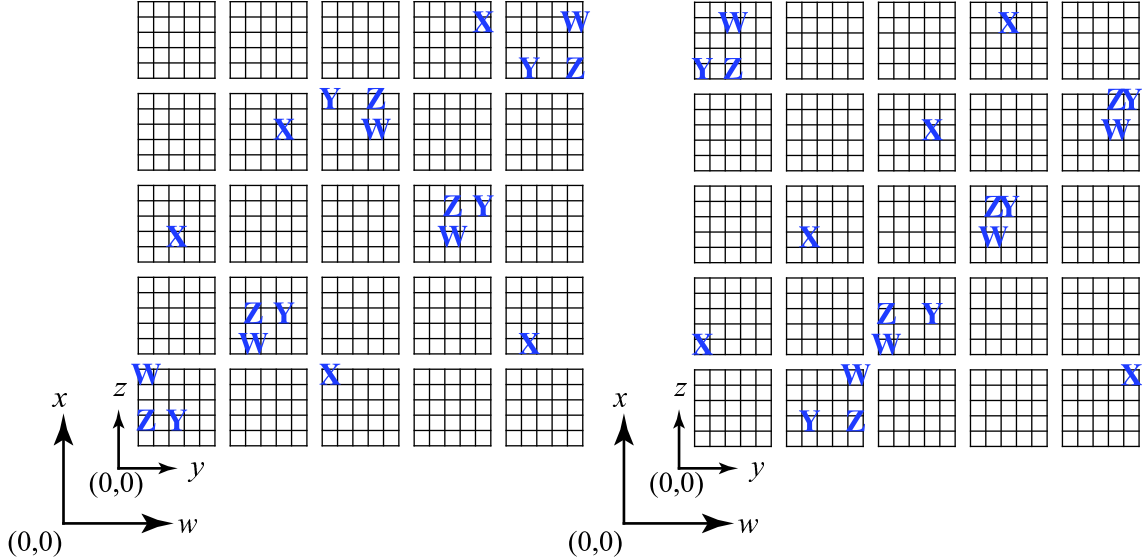}
\caption{The hypercube on the right is a swap of the hypercube on the left.} \label{hyperswap}
\end{figure}
\medskip

A swap move also includes swapping components of split links in the four grid diagram projections.  For example, let $H\Gamma$ be a hypercube diagram of size $n$ with two components that are disjoint from each other (two disjoint tori).  After possibly some commutation and stabilization moves, assume that both components project to split links in all four grid diagram projections $G_{wx}$, $G_{yz}$, $G_{xy}$, and $G_{zw}$ in block form.  Here block form means that the first component is always in the $[0,a]\times [0,a]$ region of all four grid diagrams and that second component is always in the $[a,n]\times[a,n]$ region of all four grid diagrams.  In this situation, we can swap either component simply by using the map defined above but only in the region of the component being swap.

\medskip

Because the hypercube swap move exchanges the $G_{wx}$ and $G_{yz}$ grid diagrams, it exchanges the two links that $G_{wx}$ and $G_{yz}$ represent.  However, the {\em pair} of links remain unchanged.

\bigskip

\begin{definition}
Any property of a hypercube that is invariant under the three moves defined above is called a {\em hypercube invariant}.
\end{definition}

Because the three moves preserve the pair $(L_1,L_2)$ associated to a hypercube diagram $H\Gamma$ up to swapping paired components, we get the following theorem:

\begin{theorem}
Let $H\Gamma$ be a hypercube diagram and let $L_1$ and $L_2$ be the links associated to the oriented grid diagrams $G_{wx}$ and $G_{yz}$.  Any hypercube invariant is also an invariant of the pair of links $(L_1,L_2)$ up to swapping paired components from $L_1$ to $L_2$ and vice versa.  If $L_1$ is a knot (and therefore so is $L_2$), then  any hypercube invariant is an invariant of the pair of knots.
\end{theorem}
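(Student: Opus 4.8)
The plan is to observe that the assignment $H\Gamma\mapsto[H\Gamma]$, sending a diagram to its own equivalence class under the three moves, is itself a hypercube invariant; feeding this \emph{universal} invariant into the asserted conclusion shows that the theorem is equivalent to a Cromwell-type completeness statement: any two hypercube diagrams carrying the same pair $(L_1,L_2)$, up to redistributing split components between the two factors, are related by a finite sequence of hypercube moves. The universal invariant makes completeness \emph{necessary}; and completeness is plainly \emph{sufficient}, since once it holds every function that is constant on move classes descends to a well-defined function of the pair. When $L_1$ is a knot there are no split components to redistribute, so the only surviving ambiguity is the full swap $L_1\leftrightarrow L_2$, giving the stated invariance of the unordered pair of knots. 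Thus the entire content is completeness, and that is what I would prove.

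To set up, I would work through the two cube-diagram projections $C_{wxz}$ and $C_{xyz}$ supplied by the crossing conditions, representing $L_1$ and $L_2$ respectively. The forward dictionary is already recorded in Section~\ref{hypercubemoves} and handles the easy ``only if'' half: a stabilization induces grid stabilizations in all four projections $G_{wx},G_{yz},G_{xy},G_{zw}$, a commutation induces grid commutations in two of them, and a swap interchanges the two cubes $C_{wxz}$ and $C_{xyz}$ after the relabeling $SW$. Since every induced grid move is an isotopy, the pair $(L_1,L_2)$ is preserved up to swap, exactly as the paragraph preceding the theorem asserts.

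For the converse, given $H\Gamma$ and $H\Gamma'$ with the same pair, I would first apply a swap, if necessary, to align which factor carries $L_1$, then invoke cube completeness (Theorem~\ref{maintheorem1}) separately on $C_{wxz},C_{wxz}'$ and on $C_{xyz},C_{xyz}'$ to obtain finite sequences of cube moves connecting them. The crux is a \emph{lifting lemma}: each cube move is to be realized by a hypercube move on $H\Gamma$ that enacts it in the chosen cube factor while leaving the other factor fixed. For moves supported in the coordinate \emph{unshared} by the two cubes --- the $w$-direction for $C_{wxz}$ and the $y$-direction for $C_{xyz}$ --- the lift is transparent: the inserted stabilization chain, or the commuted flats, live entirely in a direction the other cube does not see, so the other projection is untouched and the marking conditions survive automatically. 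A \emph{reconstruction lemma} --- that a hypercube diagram is determined up to hypercube moves by its ordered pair of cube projections --- then closes the loop, assembling the two lifted sequences into a single sequence from $H\Gamma$ to $H\Gamma'$.

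The main obstacle is lifting cube commutations supported in the \emph{shared} $x$- or $z$-direction. A hypercube commutation is legal only when it induces \emph{simultaneously} legal grid commutations in two of the four projections, so a naive lift of an $x$- or $z$-flat commutation of $C_{xyz}$ may force an incompatible commutation in $C_{wxz}$, or pass through an intermediate configuration violating the marking conditions --- precisely the pitfall flagged at the end of the Hypercube Commutation discussion, and the reason the author defers a combinatorial characterization to ``future work in the same vein as \cite{CS}.'' My proposed route is a normalization step that, using only unshared-direction cube moves, rewrites any shared-direction commutation as a composition of liftable moves before lifting; proving that this normalization always terminates and never passes through an illegal hypercube is the delicate heart of the argument.
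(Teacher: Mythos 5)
You have read far more into this theorem than the paper proves, and your plan then founders on exactly the point you yourself flag. The paper's entire proof is the single sentence preceding the statement: a hypercube stabilization induces grid stabilizations in all four projections, a hypercube commutation induces grid commutations in two of them, and a swap exchanges $G_{wx}$ with $G_{yz}$ (or exchanges paired split components in block form); hence the pair $(L_1,L_2)$, up to swaps, is constant on move classes, and a hypercube invariant is therefore a well-defined quantity attached to that pair. That is the soundness direction you dispose of in one paragraph as the ``easy `only if' half.'' Nowhere does the paper state, use, or prove the Cromwell-type completeness assertion (same pair implies move-equivalent) to which your universal-invariant bookkeeping reduces the strong reading of the statement; on the contrary, the author explicitly defers the combinatorial analysis of when hypercube commutations can be performed to future work. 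Your reduction is logically sound as far as it goes, but the proof you then owe --- completeness --- is never delivered: the normalization of shared-direction commutations that you correctly identify as ``the delicate heart'' is conjectured, not proven, with no termination measure or case analysis offered. As it stands the proposal is a research plan with its central lemma open, not a proof, and it attributes to the theorem a far stronger content than the paper's one-line argument establishes.

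There are also concrete missteps inside the sketch. First, Theorem~\ref{maintheorem1} cannot be invoked on $C_{wxz}$ and $C_{xyz}$: Section~\ref{hypercubediagrams} is explicit that these data structures satisfy only the cube \emph{marking} conditions and are not required to satisfy the cube crossing conditions --- indeed, if both were honest cube diagrams they would share their $zx$-plane grid projection and would present essentially the same link, collapsing the theory; so for any interesting $H\Gamma$ at least one of them is not a cube diagram and cube completeness simply does not apply. The carriers of $L_1$ and $L_2$ are the grid diagrams $G_{wx}$ and $G_{yz}$, so the correct input would be Cromwell--Dynnikov grid moves, and the lifting problem becomes one of realizing pairs of grid moves by hypercube moves --- the same unsolved simultaneity issue in different clothing. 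Second, even your ``transparent'' unshared-direction lifts fail for stabilization: the hypercube has a single size $n$ in all axes, so a stabilization in one factor necessarily enlarges all four grid projections (the paper notes a hypercube stabilization induces grid stabilizations in all four); only unshared-direction commutations leave the other factor literally untouched. Third, your reconstruction lemma in the ``up to hypercube moves'' form is proven nowhere and is essentially equivalent to the completeness statement you are trying to establish, so the argument is circular at that joint.
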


\bigskip

\bigskip
\section{Algorithm for representing tori from hypercubes}
\label{Algorithm}
\bigskip

The construction of a torus from a hypercube diagram proceeds in steps.  Before we describe the procedure, it is important to point out the relationship between our earlier choices and the consequence they have in constructing a torus.

\medskip

Let $H\Gamma$ be a hypercube.  Recall that $4$ projections were chosen to be grid diagrams:
$$\pi_x\circ\pi_w:H\Gamma \ra G_{yz}, \ \ \pi_z\circ\pi_w:H\Gamma \ra G_{xy},\  \ \pi_x\circ\pi_y:H\Gamma\ra G_{zw},\ \  \pi_z\circ\pi_y:H\Gamma\ra G_{wx}.$$
Consider two adjoining segments of the hyperlink that connect a $W$ marking to an $X$ marking and an $X$ marking to a $Y$ marking (the segments $\overline{WX}$ and $\overline{XY}$).  Then the sequence of the projections $\pi_x\circ\pi_w$  ``sweeps'' out a rectangle with sides $\overline{WX}$ and $\overline{XY}$.

\medskip

Following this insight, we demand that all polygonal regions of the torus be in one of the four planes: $wx$-plane, $yz$-plane, $xy$-plane or $zw$-plane.  We also require that all such polygonal regions be rectangles.  These rectangles can be named as follows:   Recall that at the $W$ marking, the segment starting at $W$ is parallel to the $w$-axis.  Specify the $\overline{WX}$ segment by $W$ and the $\overline{XY}$ segment by $X$ and the rectangle formed from these two segments by $WX$.
\bigskip

\bigskip

\noindent{\bf Algorithm for building a torus from a hypercube}

\bigskip

In the algorithm given below, we include pictures of each step for the hypercube diagram shown in Figure~\ref{hypercommutation_ex}.

\begin{enumerate}

\item[\bf Step 1.]  Attach all $WX$ and $YZ$ rectangles to the hyperlink (Figure~\ref{step1}):

\begin{figure}[H]
\includegraphics[scale=1]{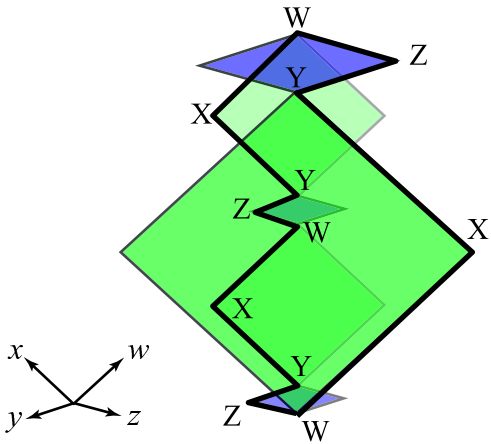}
\caption{Attach $WX$ and $YZ$ rectangles to the hyperlink.} \label{step1}
\end{figure}
\medskip

Note that the hyperlink {\em orients} these rectangles!  The orientation agrees with the projections to the grid diagrams.   For example, $WX$ rectangles  project to rectangles in $G_{wx}$ whose orientation agrees with the orientation given by the segments $WX$ and $XY$ (positive if counterclockwise, negative if clockwise). \\ \mbox{}\\

\item[\bf Step 2.] After attaching the $WX$ and $YZ$ rectangles to the hyperlink, each $W$ marking (and $Y$ marking) of the hyperlink has four segments attached to it coming from the boundary edges of the $WX$ and $YZ$ rectangles.  There is exactly one way to attach two rectangles to those four segments that is consistent with the orientations on the $WX$ and $YZ$ rectangles:  a $ZW$ rectangle attached to the $ZW$ and $WX$ segments, and an $XY$ rectangle attached to the other two segments (a rectangle on the ``opposite side'' of the attached $ZW$ rectangle).  Note that the orientation of the boundary given by the $WX$ and $YZ$ rectangles is opposite the orientation of the attaching rectangles themselves.

 \begin{figure}[H]
\includegraphics[scale=.9]{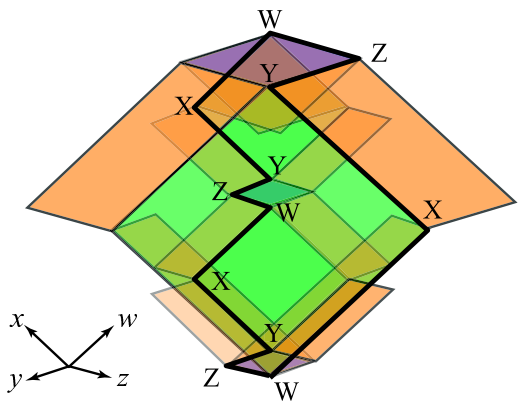}
\caption{Attach $XY$ and $ZW$ rectangles to the hyperlink.} \label{step2}
\end{figure}
\medskip

\item[\bf Step 3.] Continue the same procedure as in Step 2 at each new vertex that has two rectangles incident with it:   attach rectangles to the figure where at least two segments form a rectangle in the $wx$, $yz$, $xy$, or $zw$ planes  (no rectangles in the $yw$-planes or $zx$-planes).  There is  only one way to attach each rectangle consistent with the orientation of the rectangles incident with the rectangle that is being attached.  \\ \mbox{}\\

\begin{figure}[H]
\includegraphics[scale=.9]{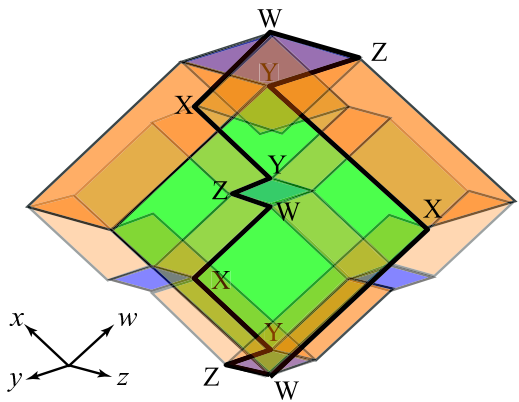}
\caption{Attach $WX$, $YZ$, $XY$ and $ZW$ rectangles between every two adjacent edges that form a rectangle in the $wx$, $yz$, $xy$, and $zw$ planes.  The picture above is only a partial picture of all the rectangles.  } \label{step3}
\end{figure}
\medskip

The reason that Figure~\ref{step3} is only a partial picture is that the full picture is hard to visualize.  The picture below shows how to insert the remaining rectangles.   The ``cap'' has been lifted off the surface to make it easier to see how the two disks are attached.\\

\begin{figure}[H]
\includegraphics[scale=.9]{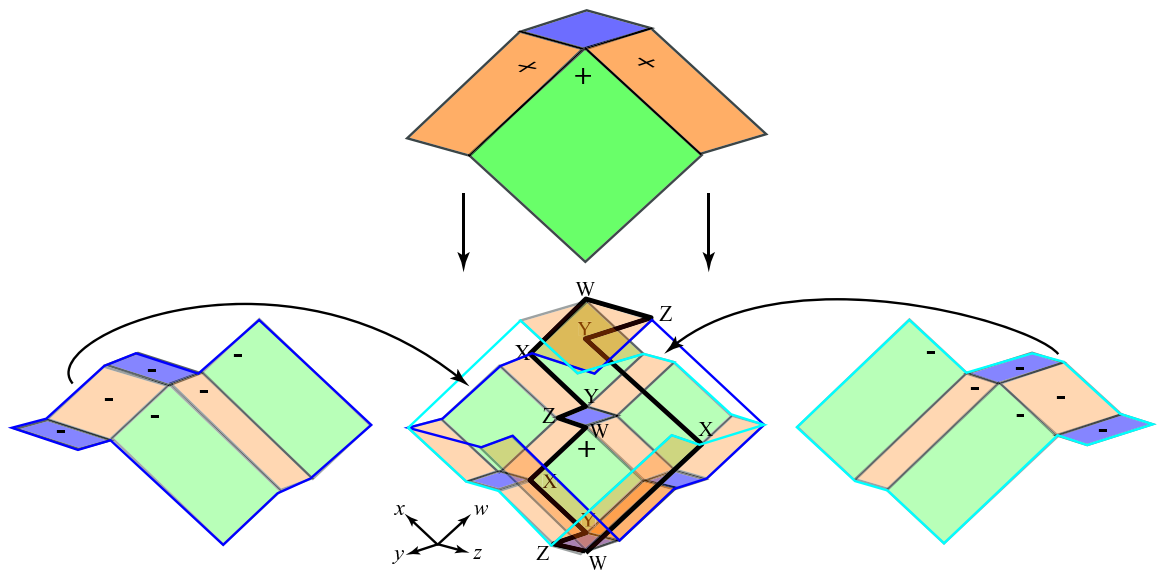}
\caption{The oriented torus determined by the hypercube diagram in Figure~\ref{hypercommutation_ex}.  } \label{step4}
\end{figure}
\medskip

\item[\bf Step 4.] We finish the algorithm with a description of the singularities of this PL-torus and when the PL-torus can be perturbed to an embedded torus in $\BR^4$.  There are four and only four naturally occurring singularities, of which the first two can be easily smoothed:

\begin{figure}[H]
\includegraphics[scale=.5]{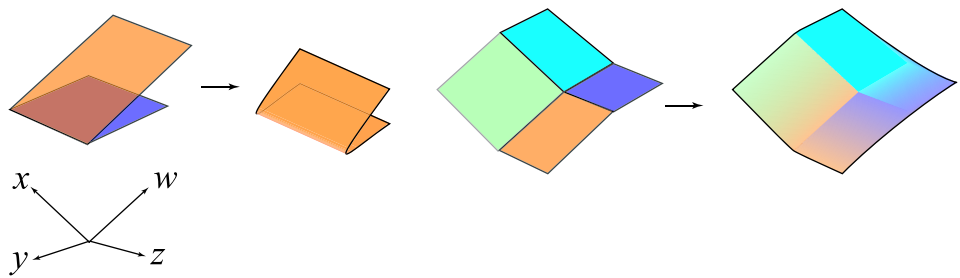}
\caption{Smoothing along the edge of two rectangles and at vertex.  } \label{smooth1}
\end{figure}
\medskip

The next type of singularity is when a rectangle passes through another rectangle.  Due to the marking conditions on the hypercube, such intersections  never intersect along an edge of either rectangle.  For example, suppose that a $WX$ rectangle passes through a $ZW$ rectangle as in the first picture below.  Then the $WX$ rectangle can perturbed positively in the $y$-direction as in the second picture and then smoothed as in the picture above.

\begin{figure}[H]
\includegraphics[scale=1]{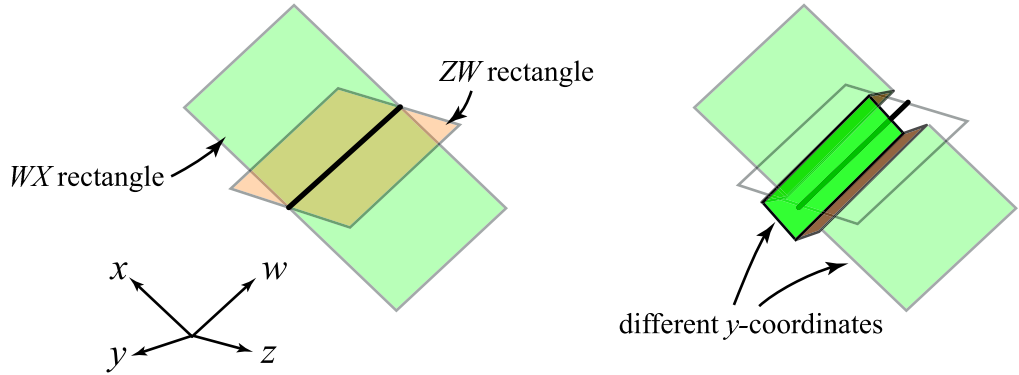}
\caption{Perturbing away the intersection of two rectangles that pass through each other. } \label{smooth2}
\end{figure}
\medskip

When two rectangles pass through each other, they share a common coordinate, and the intersection is never transverse in $\BR^4$.  Therefore there is always a coordinate in which one of the rectangles can be perturbed along to remove the intersection.

\medskip

As we will see in the proof below, if two rectangles pass through each other, then the intersecting segment is part of a circle called a {\em double point circle}.  If this circle doesn't intersect another double point circle, then the entire double point circle can be perturbed away.  For example, the segment in Figure~\ref{smooth2} is part of a circle  that is the union of segments made out of intersections of $WX$ and $ZW$ rectangles and intersections of $XY$ and $YZ$ rectangles.  By perturbing the $XY$ rectangle in the positive $w$-direction, the perturbation in Figure~\ref{smooth2} can be continued, as in the picture below:

\begin{figure}[H]
\includegraphics[scale=1]{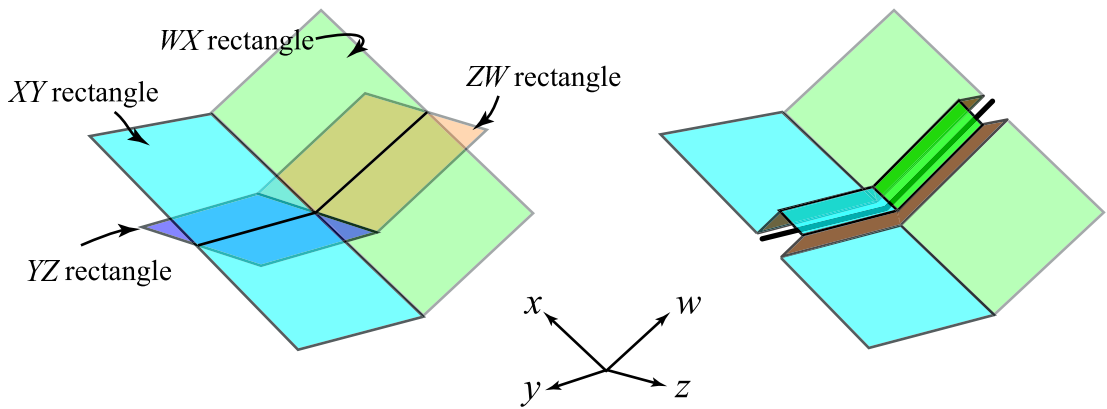}
\caption{Perturbing away the double point circle. } \label{smooth3}
\end{figure}
\medskip

Each rectangle comes with an orientation inherited by the original orientation on the hyperlink.  This orientation can be used to choose the the perturbation direction in a standard way for all rectangles that intersect.

\medskip

The fourth type of singularity cannot be perturbed away: when three rectangles intersect in the following configuration:

\begin{figure}[H]
\includegraphics[scale=1]{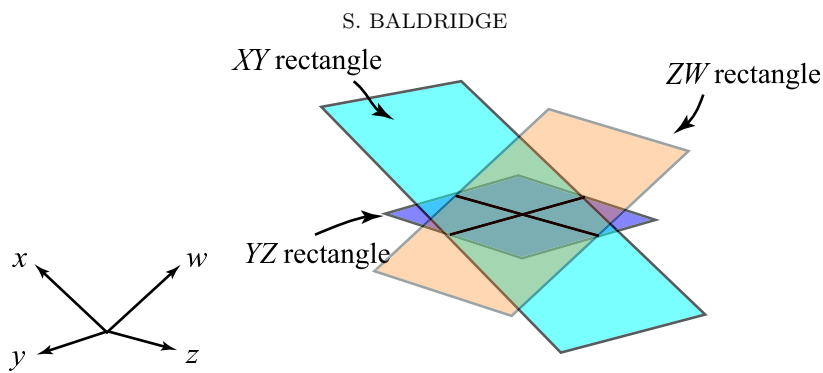}
\caption{A singularity that cannot be perturbed away. } \label{smooth4}
\end{figure}
\medskip

This type of singularity can be lifted off of the $YZ$ rectangle using the procedure above, but the perturbations of the $XY$ and $ZW$ rectangles will continue to intersect each other transversely in a point.
\end{enumerate}
 
 \bigskip
 
 We are now ready to prove:
 
 \medskip
 
 \begin{theorem}
Let $H\Gamma$ be a hypercube diagram.  Then $H\Gamma$ represents a $PL$-torus  which can be smoothed to an immersed Lagrangian torus in $\BR^4$ with regards to the symplectic form ${dw\wedge dy + dz\wedge dz}$.  If none of the double point circles intersect each other in the $PL$-torus, then the $PL$-torus can be smoothed to an embedded torus in $\BR^4$.  If there are no double point circles, then the $PL$-torus can be smoothed to an embedded Lagrangian torus in $\BR^4$.
\end{theorem}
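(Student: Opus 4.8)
The plan is to build directly on the rectangle-by-rectangle construction of Section~\ref{Algorithm}, treating the topological claim (that $H\Gamma$ yields a torus) and the symplectic claims (Lagrangian immersion/embedding) separately. At the outset I would record the symplectic form intended in the statement as $\om = dw\wedge dy + dx\wedge dz$ (the printed $dz\wedge dz$ is a typo), and make the central observation that governs everything: with this $\om$, each of the four coordinate $2$-planes $wx$, $yz$, $xy$, $zw$ is Lagrangian (the restriction of $\om$ vanishes on each), whereas the two planes $yw$ and $zx$ excluded by the marking conditions are exactly the two on which $\om$ is nondegenerate. Thus every $WX$, $YZ$, $XY$, and $ZW$ rectangle produced by the algorithm is an honestly Lagrangian rectangle, so the $PL$-torus is Lagrangian in the piecewise sense and only its singularities need attention.

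First I would prove the topological statement. Using the marking and crossing conditions together with Steps 1--3, one checks that every edge of every rectangle is shared by exactly two rectangles and that the four rectangles meeting at each marking (Step~2, i.e. the $WX,YZ,XY,ZW$ quadruple around a $W$-vertex, and cyclically for $X,Y,Z$) close up so that the link of each vertex is a single circle; hence $S:=H\Gamma$ is a closed surface. The orientation that the hyperlink induces on the rectangles is coherent --- adjacent rectangles induce opposite orientations on a shared edge, which is the content of the ``opposite boundary orientation'' remark in Step~2 --- so $S$ is orientable. A finite cell count of vertices, edges and rectangles then gives $\chi(S)=0$, so each connected component is a torus (and there is one component per component of the hyperlink). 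I would record the cell count as routine.

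The heart of the argument is the Lagrangian smoothing, and here the key device is to view $(\BR^4,\om)$ as the product of symplectic planes $\BR^2_{wy}\times\BR^2_{xz}$. Under this splitting each rectangle is literally a product of an axis-parallel segment in $\BR^2_{wy}$ with an axis-parallel segment in $\BR^2_{xz}$: $WX$ is (a $w$-segment)$\times$(an $x$-segment), $ZW$ is (a $w$-segment)$\times$(a $z$-segment), $XY$ is (a $y$-segment)$\times$(an $x$-segment), and $YZ$ is (a $y$-segment)$\times$(a $z$-segment). Consequently, near each edge the $PL$-surface is locally (segment)$\times$(L-shaped corner) and near each vertex it is locally (L-shaped corner)$\times$(L-shaped corner) --- for instance the four rectangles around a $W$-vertex assemble exactly into $(\alpha_w\cup\alpha_y)\times(\beta_x\cup\beta_z)$, a product of two corners. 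I would therefore smooth factorwise: round each corner to a smooth arc inside its symplectic plane. Since a product of Lagrangian curves in $\BR^2_{wy}\times\BR^2_{xz}$ is Lagrangian, the rounded surface is smooth and Lagrangian, and it agrees with $S$ outside a neighborhood of the edges and vertices. This resolves the first two singularity types of Step~4 Lagrangianly; the remaining singularities are the ``rectangle through rectangle'' double point circles of Figures~\ref{smooth2}--\ref{smooth3}, which are Lagrangian self-intersections and are harmless for an \emph{immersed} Lagrangian torus. This establishes the first assertion.

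For the second and third assertions I would analyze the double point circles. Each such circle is a union of intersection segments along which two Lagrangian sheets meet nontransversally in a shared coordinate (Step~4); using the rectangle orientations to fix a canonical normal direction, a disjoint double point circle can be pushed off in that coordinate as in Figures~\ref{smooth2}--\ref{smooth3}. If the double point circles are pairwise disjoint, perturbing each one away yields an embedded torus --- but the perturbation tilts a rectangle out of its coordinate plane, so (as a direct computation of $\om$ on the tilted sheet shows, giving a nonzero $\partial\varphi/\partial x$ term) it destroys the Lagrangian condition, which is exactly why only embeddedness, not Lagrangian embeddedness, is claimed. If instead there are \emph{no} double point circles, no perturbation is needed and the Lagrangian surface produced in the previous paragraph is already embedded, giving an embedded Lagrangian torus. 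I expect the main obstacle to be the bookkeeping that makes the factorwise corner-smoothings mutually consistent: one must choose the rounding profiles so that the smoothing done at a vertex matches the smoothing done along each of the four edges running into it, and so that the canonical perturbation directions used to remove disjoint double point circles never collide --- this is where the non-intersection hypothesis does its real work, since Figure~\ref{smooth4} shows that two meeting double point circles force a genuine transverse self-intersection that survives any perturbation.
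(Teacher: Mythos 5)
Your proposal is correct, and its overall architecture---assemble the $PL$ surface from $WX$, $YZ$, $XY$, $ZW$ rectangles, establish closedness/orientability and $\chi=0$ by a cell count, then treat the corner singularities and the double point circles separately---is exactly the paper's (the paper, like you, reads the misprinted form as $dw\wedge dy + dz\wedge dx$; the sign discrepancy with your $dx\wedge dz$ is immaterial since both make the four allowed coordinate planes Lagrangian). Where you genuinely depart from the paper is the smoothing step. The paper proves the Lagrangian claim as a separate lemma by writing explicit $C^1$ parametrizations: an edge-rounding map $E(s,t)=(s,\,\varepsilon-\varepsilon\cos(t/\varepsilon),\,0,\,\varepsilon-\varepsilon\sin(t/\varepsilon))$ and a vertex map $V(s,t)$ built from the analogous expressions in both parameters, then checking by hand that $\om$ vanishes on the resulting tangent frames and that the images of $V$ and $E$ match along adjoining boundaries. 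Your splitting $\BR^4=\BR^2_{wy}\times\BR^2_{xz}$, under which every rectangle is a product of axis-parallel segments and every edge (resp.\ vertex) neighborhood is a product of a segment with an L-shaped corner (resp.\ of two corners), subsumes those formulas---$E$ and $V$ are precisely quarter-circle roundings performed factorwise---and it makes the edge/vertex compatibility you flagged as the ``main obstacle'' automatic, while also explaining structurally why the excluded $yw$- and $zx$-planes are the symplectic ones. What the paper's computational route buys is self-contained concreteness (formulas one can upgrade to $C^\infty$ and inspect directly); what your route buys is that the verification disappears into the standard fact that a product of curves in the two symplectic factors is Lagrangian. One caveat: two items you defer as routine are actually carried out in the paper and are the real content of the embeddedness criterion, namely the explicit count (filling every $yz$-flat gives $4n^2$ vertices, $8n^2$ edges, $4n^2$ faces, whence $\chi=0$) and, more importantly, the combinatorial fact that double point circles occur only as horizontal or vertical circles in the schematic and that two of them intersect if and only if one is horizontal and one is vertical---this, not just Figure~\ref{smooth4}, is what makes the hypothesis ``no two double point circles intersect'' verifiable and is the substance behind your final paragraph.
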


Given the algorithm above, the proof has to deal with the following issues: Why does the insertion of rectangles into a hypercube give a closed surface?  Why is that surface a torus?  Why do rectangles that pass through each other do so only along double point circles?  When do double point circles intersect each other? When can the corners of the $PL$-torus be smoothed to get a Lagrangian torus?

\begin{proof}
Given a hypercube $H\Gamma$, consider its hypercube diagram schematic (we will work using the example in Figure~\ref{torusproof1} to guide the discussion).  Following the first step in the algorithm above, attach all $YZ$ rectangles into the schematic that are attach to the hyperlink, as in the picture below.

\medskip

\begin{figure}[H]
\includegraphics[scale=1]{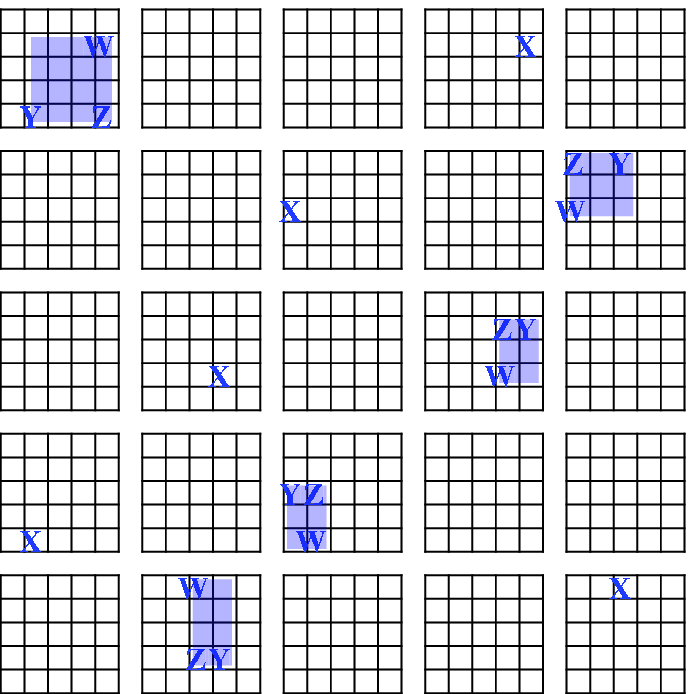}
\caption{A cube diagram schematic with $YZ$ rectangles filled in. } \label{torusproof1}
\end{figure}

Next, put rectangles into each $YZ$ flat in the following way:  for each $YZ$ flat, insert a rectangle  that has the same width and $y$-coordinate position  as the rectangle in that column and same height and $z$-coordinate position as the rectangle in that row.   See Figure~\ref{torusproof2}.

\begin{figure}[H]
\includegraphics[scale=1]{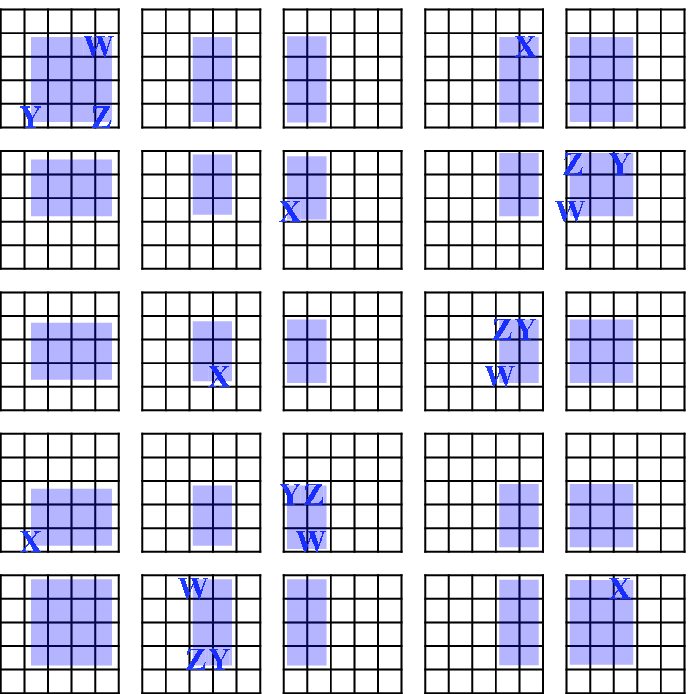}
\caption{A cube diagram schematic with $YZ$ rectangles filled in. } \label{torusproof2}
\end{figure}
\medskip

Connect $YZ$ rectangles by attaching $ZW$ rectangles to pairs of $YZ$ rectangles that are in the same row and have an edge in the same $y$-coordinate.  The hypercube diagram structure guarantees that there will be exactly $n$ distinct $ZW$ rectangles in each row (in each row, there are exactly two rectangles that have the same $y$-coordinate on one of their edges for each $y$-coordinate).

\medskip

Do the same for $XY$ rectangles: connect $YZ$ rectangles in the same column together by attaching $XY$ rectangles to pairs of $YZ$ rectangles that are in the same column and have an edge in the same $z$-coordinate.  Again, there will be exactly $n$ distinct $XY$ rectangles per column and $n^2$ $XY$ rectangles in the entire hypercube.

\medskip

At each of the four corners of each $YZ$ rectangle, there is exactly one $WX$ rectangle that can be inserted such that its edges agree with the $ZW$ and $XY$ rectangles that are incident to it.  Again, there are $n^2$ number of $WX$ rectangles, one unique rectangle for each $YZ$ flat.  Figure~\ref{torusproof3} gives some examples of $ZW$, $XY$, and $WX$ rectangles attached to the $YZ$ rectangles.

\begin{figure}[H]
\includegraphics[scale=1]{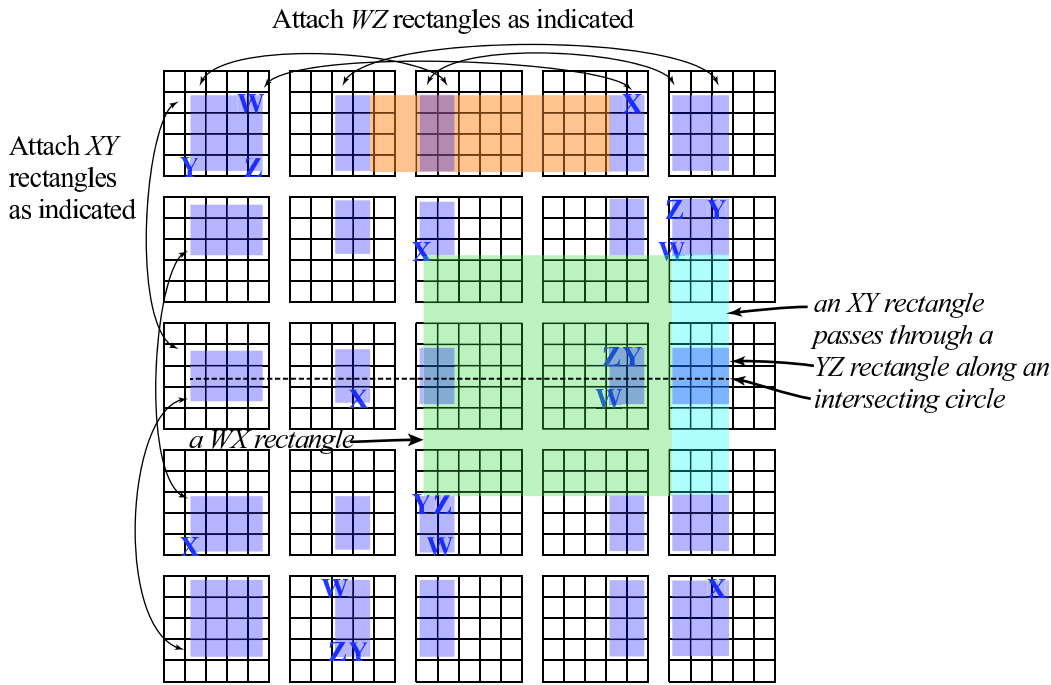}
\caption{A cube diagram schematic with examples of $ZW$, $XY$, and $WX$ rectangles. } \label{torusproof3}
\end{figure}
\medskip

A careful count of the edges used in this construction gives $8n^2$ edges ($4$ edges for each $YZ$ rectangle, $2$ new edges for each $ZW$ and $XY$ rectangle, and no new edges for each $WX$ rectangle).  Each edge intersects exactly two faces for a total of $4n^2$ faces.  Each vertex is incident to exactly 4 edges and 4 faces.  Altogether, there are four vertices in each $YZ$ flat for a total of $4n^2$ vertices.  Thus the union of the rectangles is clearly a closed (oriented) $PL$ surface with Euler characteristic $\chi = 4n^2-8n^2+4n^2=0$, i.e., a torus.

\medskip

Study the double point circle in Figure~\ref{torusproof3}.  Each $XY$ rectangle that goes from the 2nd row to the 4th row passes through a $YZ$ rectangle in row 3.  Similarly, each $WX$ rectangle that goes from the 2nd row to the 4th row passes through a $ZW$ rectangle in row 3 (the union of those $XY$ and $WX$ rectangles is a band around one of the homology generators of the torus).

\medskip

A symmetric argument shows that double point circles can form when $ZW$ and $WX$ rectangles pass through $YZ$ and $XY$ rectangles respectively.  Because of the marking conditions imposed on the hypercube, double point circles can only appear as disjoint horizontal lines or disjoint vertical lines on the hypercube diagram schematic, but horizontal and vertical double point circles can intersect.  Thus two double point circles intersect if and only if one of them is a horizontal circle and one of them is a vertical circle in a hypercube diagram schematic.

\medskip

If none of the double point circles intersect each other, then the union of double point circles can be smoothed to get an embedded torus as described in the algorithm.  If a horizontal and vertical double point circle meet, then the torus is immersed.

\medskip

\end{proof}

We finish the proof by showing that the smoothing of the $PL$-torus constructed in the proof above can be done in such a way that it is Lagrangian:

\medskip

\begin{lemma}
The $PL$-torus can be smoothed to an immersed Lagrangian torus in $\BR^4$ with respect to the symplectic form $\omega=dw\wedge dy + dz\wedge dx$.   If there are no double point circles in the $PL$-torus, then the Lagrangian torus is embedded in $\BR^4$.

\end{lemma}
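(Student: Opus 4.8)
The plan is to exploit the fact that the symplectic form is chosen precisely so that each of the four kinds of rectangle lies in a coordinate $2$-plane on which $\omega$ vanishes identically; the PL-torus is then isotropic away from its singular set, and the only real work is to smooth the corners without destroying the isotropic condition. First I would record that $\omega=dw\wedge dy+dz\wedge dx$ is symplectic, since $\omega\wedge\omega=2\,dw\wedge dx\wedge dy\wedge dz\neq 0$. Then a one-line computation for each type gives $\omega(\partial_w,\partial_x)=\omega(\partial_y,\partial_z)=\omega(\partial_x,\partial_y)=\omega(\partial_z,\partial_w)=0$, so every $WX$-, $YZ$-, $XY$-, and $ZW$-rectangle (each a translate of one of these coordinate planes) is a flat Lagrangian piece, and the tangent plane to the PL-torus is Lagrangian at every interior point of every face.

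Next I would treat the edge singularities of Figure~\ref{smooth1}. Two adjacent rectangles share an edge parallel to some axis $\partial_a$ and span $\langle\partial_a,\partial_b\rangle$ and $\langle\partial_a,\partial_c\rangle$ respectively, while agreeing on the value of the fourth coordinate $d$ in a neighborhood of the shared edge. I would smooth inside the slice $\{d=\mathrm{const}\}$ by replacing the corner with a rounding curve $\gamma$ running in the $\partial_b\partial_c$-directions, so that the tangent plane of the rounded surface is $\langle\partial_a,\gamma'\rangle$ with $\gamma'\in\langle\partial_b,\partial_c\rangle$. Because both adjacent faces are Lagrangian we already have $\omega(\partial_a,\partial_b)=0$ and $\omega(\partial_a,\partial_c)=0$, hence $\omega(\partial_a,\gamma')=0$ automatically. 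Thus the edge smoothing is Lagrangian for free, for all four adjacency types $WX$--$XY$, $XY$--$YZ$, $YZ$--$ZW$, $ZW$--$WX$, and it can be kept supported in an arbitrarily small neighborhood of the edge.

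The crux is the vertex smoothing. After the edges are rounded, a conical singularity remains at each vertex, where four Lagrangian quarter-planes meet cyclically, one along each of the four coordinate axes. I would handle this by observing that the link of the vertex is a piecewise-Legendrian quadrilateral in $(S^3,\xi_{\mathrm{std}})$ --- each arc is a great-circle arc lying in a Lagrangian plane, hence Legendrian --- and that rounding its four corners produces a smooth Legendrian unknot bounding an explicit embedded Lagrangian disk, which I glue in to cap the cone point. Since the lemma only claims an \emph{immersed} Lagrangian torus in general, an alternative is to invoke the Gromov--Lees $h$-principle for Lagrangian immersions: the faces already furnish a Lagrangian Gauss map, which extends over the $1$-skeleton and the vertices because the Lagrangian Grassmannian $U(2)/O(2)$ is connected, and the $h$-principle then upgrades this formal data to a genuine Lagrangian immersion regularly homotopic to the PL-torus. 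I expect this vertex step to be the main obstacle: one must produce the local Lagrangian disk explicitly, verify that it matches the already-rounded edges to the required order, and fix the rounding direction consistently using the orientation that the hyperlink induces on each rectangle, as in the algorithm.

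Finally I would assemble the global conclusion. All smoothings are local, $C^0$-small, and supported near the $1$-skeleton and the vertices; by the marking conditions the double point circles lie in the interiors of faces and never along an edge, so they are disjoint from the smoothing region. Consequently the self-intersection locus is unchanged: the output is an immersed Lagrangian torus whose only self-intersections are the original double point circles. In particular, if the PL-torus has no double point circles it is already embedded, and the local smoothings preserve embeddedness, producing an embedded Lagrangian torus in $\BR^4$.
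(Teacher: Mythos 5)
Your handling of the faces and the edge roundings is correct and coincides with the paper's argument: every rectangle lies in a $wx$-, $xy$-, $yz$- or $zw$-plane, all of which are $\omega$-isotropic, and an edge rounding sweeps the edge direction $\partial_a$ along a curve $\gamma$ whose velocity stays in the span of the two transverse face directions, so $\omega(\partial_a,\gamma')=0$ for all four adjacency types. The paper does exactly this, with the explicit $C^1$ model $E(s,t)=(s,\ \varepsilon-\varepsilon\cos(t/\varepsilon),\ 0,\ \varepsilon-\varepsilon\sin(t/\varepsilon))$.

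The genuine gap is the vertex step, which you yourself flag as the main obstacle and do not close; neither of your two alternatives suffices. The Legendrian-capping route asserts that the rounded link of the cone point is a Legendrian unknot bounding ``an explicit embedded Lagrangian disk,'' but no disk is produced, and the assertion is not automatic: whether a Legendrian unknot admits an embedded Lagrangian filling depends on its Legendrian isotopy class (its Thurston--Bennequin invariant in particular), and one would still have to match the disk to the already-rounded edges --- precisely the work you defer. The Gromov--Lees route cannot prove the lemma at all: the $h$-principle produces \emph{some} Lagrangian immersion in the right formal class, with no control over its double-point set and no $C^0$-proximity to the given PL-torus, so it can never deliver the second assertion (embeddedness when there are no double point circles); embedded Lagrangians notoriously do not obey an $h$-principle. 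What is missing is the elementary observation that makes the vertex easy: $\omega=dw\wedge dy+dz\wedge dx$ pairs $w$ with $y$ and $x$ with $z$, with no cross terms between the pairs $\{w,y\}$ and $\{x,z\}$, so \emph{any} product of a curve in the $(w,y)$-plane with a curve in the $(x,z)$-plane is Lagrangian. The paper therefore smooths the vertex by the product of two quarter-circle roundings,
$$V(s,t)=(\varepsilon-\varepsilon\cos(s/\varepsilon),\ \varepsilon-\varepsilon\cos(t/\varepsilon),\ \varepsilon-\varepsilon\sin(s/\varepsilon),\ \varepsilon-\varepsilon\sin(t/\varepsilon)),$$
whose tangent planes $\langle\,\sin(s/\varepsilon)\partial_w-\cos(s/\varepsilon)\partial_y,\ \sin(t/\varepsilon)\partial_x-\cos(t/\varepsilon)\partial_z\,\rangle$ are visibly isotropic and which agrees with the edge model $E$ along the adjoining boundaries. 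No contact topology or $h$-principle is needed.

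A smaller error occurs in your closing paragraph: the double point circles are \emph{not} disjoint from the smoothing region. Such a circle is a union of intersection segments, and consecutive segments meet at points where an edge shared by two rectangles of one band passes through the interior of a rectangle of the other band, so the circle crosses the edge neighborhoods being rounded. The rounding merely perturbs the circle --- the smoothed torus is still an immersion with circles of double points --- but ``the self-intersection locus is unchanged'' is not literally true. The paper claims only that the smoothed torus remains immersed, and adds the converse observation you omit: these double circles cannot be removed while keeping the torus Lagrangian, since the perturbation that eliminates them must pass through surfaces on which $\omega$ does not vanish.
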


\begin{proof}

First, all of the rectangles used to build the torus are in the $wx$, $xy$, $yz$, and $zw$ planes, which are Lagrangian planes with respect to the symplectic form defined by $\omega=dw\wedge dy + dz\wedge dx$.  Hence the $PL$-torus is already a ``Lagrangian'' torus in the $PL$-category.  We need only show that the edges and vertices where the rectangles adjoin can be rounded so that the resulting smoothing is still Lagrangian.  Without loss of generality, we describe equations for a $C^1$-immersed torus, noting that $C^\infty$ smoothing equations are also easy to write down but make the equations used in the proof  unnecessarily more complicated.

\medskip

Given $0<\varepsilon <<1$, we remove a closed $\varepsilon$-neighborhood from the edges and at the vertices of the rectangles used to construct the $PL$-torus and replace those neighborhoods as follows.   If the length of an edge joining a $WX$ rectangle at the origin to a $ZW$ rectangle at the origin  is of length $A\in \BZ$, replace the closed $\varepsilon$-neighborhood

$$[\varepsilon, A-\varepsilon] \times [0, \varepsilon] \times \{ 0 \} \times \{0\} \ \bigcup \ [\varepsilon, A-\varepsilon] \times \{0\} \times \{0\} \times [0,\varepsilon]$$

by the image of the map $E:[\varepsilon,A-\varepsilon]\times[0,\varepsilon \pi/2] \ra \BR^4$ given by

$$E(s,t) = (s, \varepsilon-\varepsilon \cos(t/\varepsilon), 0, \varepsilon-\varepsilon \sin(t/\varepsilon)).$$

A basis for the tangent vectors at a point $E(s,t)$ is then $$\langle \frac{\partial}{\partial w}, \ \sin(t/\varepsilon)\frac{\partial}{\partial x} - \cos(t/\varepsilon)\frac{\partial}{\partial z}\rangle.$$
Clearly, $\omega$ is zero on any two vector fields that are linear combinations of these basis elements.

\medskip

Similarly, we can write down equations at a corner.   Given a vertex at the origin, replace the closed $\varepsilon$-neighborhood of the original $PL$-torus

$$[0,\varepsilon] \times [0,\varepsilon] \times \{0\} \times \{0\}  \bigcup  \{0 \} \times [0,\varepsilon] \times [0,\varepsilon] \times \{0\} \bigcup \{0 \} \times  \{0 \} \times [0,\varepsilon] \times [0,\varepsilon] \bigcup [0,\varepsilon]  \{0 \} \times  \{0 \} \times [0,\varepsilon] $$

by the image of the map $V: [0,\varepsilon \pi/2] \times [0,\varepsilon \pi/2] \ra \BR^4$ given by

$$V(s,t) =(\varepsilon-\varepsilon \cos(s/\varepsilon), \varepsilon-\varepsilon \cos(t/\varepsilon), \varepsilon-\varepsilon \sin(s/\varepsilon), \varepsilon-\varepsilon \sin(t/\varepsilon))$$

Note that the image of the map $V$ matches up with the image of the map $E$ on the adjoining boundaries.

\medskip

A basis for the tangent vectors at a point $V(s,t)$ is
$$\langle  \sin(s/\varepsilon)\frac{\partial}{\partial w} - \cos(s/\varepsilon)\frac{\partial}{\partial y}, \ \sin(t/\varepsilon)\frac{\partial}{\partial x} - \cos(t/\varepsilon)\frac{\partial}{\partial z}\rangle.$$

For the same reasons as in the calculation above, $\omega$ is zero on any two vector fields that are linear combinations of these basis elements.

\medskip

Using the $C^\infty$-version of these replacements along each edge and at each vertex gives an immersed Lagrangian torus in $\BR^4$.  If there are no double point circles in the original $PL$-torus, then this Lagrangian torus is clearly embedded in $\BR^4$.  If the $PL$-torus has a double point circle, then Lagrangian torus remains immersed:  as Figure~\ref{smooth3} shows, perturbing the double point circle  away requires replacing parts of the $PL$-torus with surfaces where the symplectic form does not vanish.
\end{proof}

\bigskip
\section{Hypercube homology}
\label{hypercubehomology}
\bigskip

\bigskip

Given a hypercube diagram $H\Gamma$ and two of the four oriented grid diagram projections $G_{wx}$ and $G_{yz}$, we associate to $H\Gamma$ a bigraded chain complex. The generators of this complex depend on the choice of these two oriented grid diagram projections, but note that $G_{wx}$ and $G_{zw}$ are projections of the same link and $G_{yz}$ and $G_{xy}$ are also.  Therefore nothing is lost  by working with the $G_{wx}$ and $G_{yz}$ projections.

\medskip

The task in this section is simple and should be clear from how hypercube diagrams were defined: we will generalize combinatorial Knot Floer Homology to the hypercube diagram setting.  First, we describe Knot Floer Homology.

\bigskip
\subsection{Knot Floer Homology from Grid Diagrams}

In \cite{mos} and \cite{most}, knot Floer homology was computed using grid diagrams. This beautiful  construction encodes a Heegaard diagram as a grid diagram, recognizes generators of the chain complex of the homology as states on the grid diagram, and counts pseudo-holomorphic disks needed to define the differential  by counting rectangles between states in the grid diagram.

\medskip
We define knot Floer homology for a $G_{xy}$ grid diagram.  It directly corresponds to how grid diagrams are defined in the literature, but it follows from the definitions for hypercube and cube diagrams that the same definition works for $G_{wx}$ or $G_{yz}$ grid diagrams as well.  In subsequent sections we will use the notation developed below with obvious letter changes for $G_{wx}$ and $G_{yz}$ grid diagrams.

\medskip

Let $G_{xy}$ be an oriented grid diagram of size $n$ using the right hand orientation $x\wedge y\wedge z$ in $\BR^3$.    Each grid diagram $G_{xy}$ has an associated chain complex $(C^-(G_{xy}),\partial^-)$.   The chain complex $C^-(G _{xy})$ is generated by the set of states $S_{xy}$.  A state ${\bf s}$ is a set of $n$ lattice points of $G_{xy}$ with no coordinate equal to $n$ such that no two points of ${\bf s}$ determine a line parallel to the $x$-axis or the $y$-axis.  Essentially, ${\bf s}$ gives a one-to-one correspondence between the first $n$ lines in the grid parallel to the $x$-axis to the first $n$ lines in the grid parallel to the $y$-axis.

\medskip

The complex $C^-(G_{xy})$ has a Maslov grading and an Alexander filtration defined by maps from $S_{xy}$ to the integers or the half integers. To define the functions, we first need to define a way to count points of two sets.   Let $A$ and $B$ be collections of a finitely many points in the plane.  Define $I(A,B)$ to be the number of pairs $(a_1,a_2)\in A$ and $(b_1,b_2)\in B$ with $a_1<b_1$ and $a_2<b_2$. Then define $J(A,B)=(I(A,B)+I(B,A))/2$.    A state ${\bf s}\in S_{xy}$ is a collection of points with integer coordinates.  Also, the $X$ markings can be thought of as a set $\mathbb{X}_{xy} = \{X_i\}_{i=1}^n$ of points in the plane with half-integer coordinates.  We can apply $J$ to both sets.

\medskip

The Maslov grading is defined as
$$M_{xy}({\bf s})=J({\bf s}-\mathbb{X}_{xy},{\bf s}-\mathbb{X}_{xy})+1,$$
where $J$ is extended bilinearly over formal sums and differences.

\medskip

Similarly, the set of $Y$ markings can be thought of as a set $\mathbb{Y}_{xy} = \{Y_i\}_{i=1}^n$ of points in the plane with half-integer coordinates.   For an $\ell$-component link, define the $\ell$-tuple of Alexanders gradings by the formula

$$A^i_{xy}({\bf s})=J({\bf s}-\frac{1}{2}(\mathbb{Y}_{xy}+\mathbb{X}_{xy}),\mathbb{Y}^i_{xy}-\mathbb{X}^i_{xy})-\frac{n_i-1}{2},$$
where the $\mathbb{X}^i_{xy} \subset \mathbb{X}_{xy}$ and $\mathbb{Y}^i_{xy} \subset \mathbb{X}_{xy}$ are the subsets corresponding to the $i^{th}$ component of the link.  For links, the $A^i_{xy}$'s can take on half-integer values.

\medskip

The differential $\partial^-$ of this graded chain complex counts ``empty'' rectangles between two states.   For the sake of easily defining rectangles on $G_{xy}$, we will consider $G_{xy}$ for a moment as a torus by gluing the segment $\{(0,n)\}\times[0,n]$ to the segment $\{(0,0)\}\times [0,n]$, and similarly gluing the outermost and innermost segments of the grid parallel to the $x$-axis together as well.  Let ${\bf s}$ and ${\bf t}$ be states of a grid diagram $G_{xy}$.  A {\em rectangle $r$ connecting {\bf s} to {\bf t}} is if, after thinking of $G_{xy}$ as a torus, a rectangle on the torus that satisfies:

\begin{itemize}
\item ${\bf s}$ and ${\bf t}$ agree  along all but two grid lines parallel to the $x$-axis,\\

\item all four corners of $r$ are points are  in ${\bf s}\cup{\bf t}$,\\

\item by traversing an $x$-axis parallel boundary segment of $r$ in the direction indicated by the orientation inherited from the grid, then the segment is oriented from ${\bf s}$ to ${\bf t}$.
\end{itemize}

Note that if ${\bf s}, {\bf t}\in S_{xy}$ agree along all but two grid lines parallel to the $x$-axis, then there are exactly two rectangles satisfying the above conditions on the torus (cf. \cite{most}).  A rectangle $r$ is {\it empty} if Int$(r)\cap{\bf s}=\emptyset$. The set of all empty rectangles connecting ${\bf s}$ to ${\bf t}$ is denoted Rect$_{xy}^\circ({\bf s},{\bf t})$.  Note that Rect$_{xy}^\circ({\bf s},{\bf t})=\emptyset$ for all states ${\bf s}$ and ${\bf t}$ that do not agree on all but exactly two grid lines parallel to the $x$-axis.

\medskip
Let $R_{xy}$ be the polynomial algebra over $\mathbb{Z}/2\mathbb{Z}$ generated by the set of elements $\{\mathtt{\bf X}_i\}_{i=1}^n$ that are in one-to-one correspondence with $\mathbb{X}_{xy}=\{X_i\}_{i=1}^n$.  This ring has a Maslov grading so that the constant terms are in Maslov grading zero and the $\mathtt{\bf X}_i$ are in grading $-2$.  The Alexander multi-filtration is defined  so that the constant terms are filtration level zero and the variables ${\bf X}_k$ corresponding to the $i^{th}$  component of the link drop the $i^{th}$  multi-filtration level by one and preserve all others.
\medskip

Define the differential by
$$\partial^-_{xy}({\bf s})=\sum_{{\bf t}\in S_{xy}}\sum_{r\in\mathrm{Rect}_{xy}^\circ({\bf s},{\bf t})}\mathtt{\bf X}_1^{X_1(r)}\cdots \mathtt{\bf X}_n^{X_n(r)}\cdot{\bf t},$$
where $X_i(r)$ is the count of how many times the marking $X_i$ appears in $r$.

\medskip

Lemma 2.11 in \cite{most} states that if two $X$ markings $X_i$ and $X_k$ correspond to the same component of a $\ell$-component link, then the multiplication by $\mathtt{\bf X}_i$ is filtered chain homotopic to multiplication by $\mathtt{\bf X}_k$.  Thus, the homology of the complex $C^-(G_{xy})$ is a module over $\mathbb{Z}/2\mathbb{Z}[\mathtt{\bf X}_1,\dots,\mathtt{\bf X}_\ell]$, where $\mathtt{\bf X}_1,\dots,\mathtt{\bf X}_\ell$ correspond to $l$ different $X$-markings, each in a different component in the link.

\medskip

This construction gives a well-defined chain complex and leads to the following theorem of Manolescu, Ozsv\'ath, and Sarkar \cite{mos} (see also \cite{most}).

\begin{theorem}
\label{MOS} Let $G_{xy}$ be a grid presentation of a link $L$. The data $(C^-(G_{xy}),\partial^-)$ is a chain complex for the Heegaard-Floer homology $CF^-(S^3)$, with grading induced by $M_{xy}$, and the filtration level induced by $A_{xy}$ coincides with the link filtration of $CF^-(S^3)$.
\end{theorem}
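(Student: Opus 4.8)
\emph{Proof plan.} The statement is the foundational theorem of Manolescu--Ozsv\'ath--Sarkar \cite{mos} and Manolescu--Ozsv\'ath--Szab\'o--Thurston \cite{most}, so the plan is to recall the structure of their argument. The idea is to exhibit $G_{xy}$ as an especially simple Heegaard diagram for $S^3$ in which the link $L$ sits, and then to show that, for a convenient almost-complex structure, the pseudo-holomorphic disk count defining the Heegaard--Floer differential collapses onto the purely combinatorial count of empty rectangles defining $\partial^-$. First I would manufacture the Heegaard diagram: viewing $G_{xy}$ on the torus $\Sigma=\mathbb{R}^2/(n\mathbb{Z})^2$---exactly as we already do in order to define rectangles---the $n$ horizontal grid circles become a collection $\{\alpha_1,\dots,\alpha_n\}$, the $n$ vertical grid circles become $\{\beta_1,\dots,\beta_n\}$, and the two families of markings become two families of basepoints. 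Under the dictionary recorded above (the paper's $X$-markings are the $O$-basepoints and the $Y$-markings are the $X$-basepoints), one checks that $(\Sigma,\{\alpha_i\},\{\beta_i\})$ is a genus-one, $n$-fold multiply-pointed Heegaard diagram whose underlying three-manifold is $S^3$, and that joining basepoints through the two handlebodies traces out precisely the oriented link of the grid. Then, in $\mathrm{Sym}^n(\Sigma)$, the tori $\mathbb{T}_\alpha=\alpha_1\times\cdots\times\alpha_n$ and $\mathbb{T}_\beta=\beta_1\times\cdots\times\beta_n$ meet in $n$-tuples carrying one point on each $\alpha_i$ matched bijectively to the $\beta_j$; since each $\alpha_i\cap\beta_j$ is a single lattice point, these intersection points are exactly the states $\mathbf{s}\in S_{xy}$, the non-repeating condition being the permutation condition in the definition of a state, and the $U$-action is multiplication by the $\mathtt{\bf X}_i$, one per $O$-basepoint.

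The heart of the argument is the differential. The Floer differential counts index-one holomorphic disks (mod $\mathbb{R}$) with boundary on $\mathbb{T}_\alpha$ and $\mathbb{T}_\beta$, weighted by $U$-powers recording multiplicities at the $O$-basepoints. Specializing Lipshitz's Maslov-index and domain analysis to the flat torus, one shows that the only positive domains of Maslov index one connecting $\mathbf{s}$ to $\mathbf{t}$ are the embedded rectangles of the grid, that such a rectangle supports a holomorphic representative precisely when it is \emph{empty} (its interior is disjoint from $\mathbf{s}$), and that each empty rectangle carries a unique representative up to reparametrization; transversality together with a Gromov-compactness argument rules out all other contributions (disk or sphere bubbling, boundary degenerations). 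This reproduces the formula for $\partial^-$ verbatim, with the $\mathbb{Y}_{xy}$-markings (the $X$-basepoints) imposing no constraint on the differential but instead weakly decreasing the filtration.

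Finally I would match the gradings, which is essentially bookkeeping with the counting functions $I$ and $J$: one verifies that $M_{xy}$ changes across a rectangle $r$ by $1-2\,\#(\mathbb{X}_{xy}\cap r)$---hence equals the Maslov index one on empty rectangles---and pins the absolute grading on a single reference state, so $M_{xy}$ agrees with the Heegaard--Floer grading, while a parallel computation with $J(\,\cdot\,,\mathbb{Y}^i_{xy}-\mathbb{X}^i_{xy})$ shows $A^i_{xy}$ computes the relative $\Spinc$/Alexander filtration, so that it coincides with the link filtration of $CF^-(S^3)$. The main obstacle is the differential step: the identification of Maslov-index-one domains on $\mathrm{Sym}^n(\Sigma)$ with empty rectangles, together with the Riemann-mapping/gluing argument producing a unique holomorphic representative and the compactness--transversality input excluding spurious contributions, is the genuine analytic content, and this is precisely what is established in \cite{mos, most}.
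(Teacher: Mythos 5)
The paper does not prove this statement at all: it appears as background (Theorem 6.1), with the proof deferred entirely to Manolescu--Ozsv\'ath--Sarkar \cite{mos} and Manolescu--Ozsv\'ath--Szab\'o--Thurston \cite{most}. Your sketch is a correct and faithful outline of the argument of \cite{mos}: the grid realized as a genus-one, multiply-pointed Heegaard diagram for $S^3$, states as the intersection points of the two tori in $\mathrm{Sym}^n(\Sigma)$, the reduction of the index-one pseudo-holomorphic disk count to empty rectangles for a suitable almost-complex structure, and the grading/filtration bookkeeping identifying $M_{xy}$ and $A_{xy}$ with the Maslov grading and link filtration. One contrast worth flagging: the sentence the paper appends after the theorem (``this theorem was proven by showing that homology was invariant under grid diagram moves'') actually describes the purely combinatorial invariance argument of \cite{most}, which is what the paper needs downstream for its hypercube invariance; that argument by itself shows the grid homology is a link invariant but does not identify it with $CF^-(S^3)$ or its link filtration. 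The identification asserted in the statement genuinely requires the holomorphic-to-combinatorial comparison you outline, so your route is the correct one for the statement as phrased, and the analytic core you isolate (index-one positive domains are exactly empty rectangles, each with a unique holomorphic representative) is indeed where all the real content lies.
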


\medskip

Suppose that the oriented link $L$ has has $\ell$ components.  Choose an ordering on $\mathbb{X}_{xy}$ so that for $i=1,\dots,\ell$, $X_i$ corresponds to the $i^{th}$ component of $L$.  Then setting all variables $\mathtt{\bf X}_i=0$ results in the hat version of knot Floer homology:
$$H_*(C^-(G_{xy})/\{\mathtt{\bf X}_i=0\}_{i=1}^n)\cong\widehat{HFK}(L)\otimes \bigotimes_{i=1}^\ell V_i^{\ot(n_i-1)},$$
where $V_i$ is the two-dimensional bigraded vector space spanned by one generator in bigrading $(0,0)$ and one generator in Maslov grading $-1$ and Alexander multi-grading corresponding to minus the $i^{th}$ basis vector.

\medskip
This theorem was proven by showing that homology was invariant under grid diagram moves (commutation and stabilization moves).   We will use this fact to prove that the hypercube homology is also invariant under hypercube diagram moves.

\medskip

\bigskip
\subsection{Hypercube homology from hypercube diagrams}  In this section we build a chain complex $C^-(H\Gamma)$ on a hypercube $H\Gamma$, define a differential $\partial^-$ on the chain complex, and prove that the hypercube homology $HFK^-(H\Gamma)$ is invariant with respect to hypercube moves.  Throughout this section, let $H\Gamma$ be a hypercube diagram of size $n$.  Let $G_{wx}$ and $G_{yz}$ be the oriented grid diagrams associated with $H\Gamma$.

\medskip

Let $S_{wx}$ and $S_{yz}$ be the set of states for the chain complexes $(C(G_{wx})^-,\partial^-_{wx})$ and $(C(G_{yz})^-,\partial^-_{yz})$ respectively (following the notation from the previous section).  These set of states are only defined on the 2-dimensional oriented grid diagrams---so  a state in $S_{xw}$ is a set of $n$ ordered pairs of a $xw$-coordinate system and a state in $S_{yz}$ is a set of $n$ ordered pairs of a $yz$-coordinate system.

\medskip

Denote the set of generators of $C^-(H\Gamma)$ by $S$.  We use $S_{wx}$ and $S_{yz}$ to build a set of states $S$ in the hypercube.   Like its grid diagram counterparts,  a set of states can be defined in $H\Gamma$ where each element state is a set of $n$ integer lattice points in $H\Gamma$ with no coordinate equal to $n$ satisfying the condition that no two points in the state determine a line parallel to one of the four axes.  There are $(n!)^3$ such configurations in $H\Gamma$, call this set of configurations $P$.  We need a set of states that only has $(n!)^2$ states, i.e., $|S|=(n!)^2$.  The solution is restrict ${\bf s}\in P$ so that for each point in ${\bf s}$, the $x$ and $z$-coordinates are equal:
$$S=\left\{ {\bf s} \in P \, | \, \forall (w_i,x_i,y_i,z_i) \in {\bf s}, x_i=z_i\right\}.$$
Define a map $\psi:S_{wx}\times S_{yz}  \ra S$ by the following procedure.  For ${\bf s}\in S_{wx}$, order all of the points in ${\bf s}$ by the last coordinate, i.e., write ${\bf s} = \{ (w_0,0),(w_1,1),\dots, (w_{n-1},n-1)\}$.  Similarly, order the points of ${\bf t}\in S_{yz}$ so that ${\bf t} = \{ (y_0,0),(y_1,1), \ldots, (y_{n-1},n-1)\}$.
With these orderings, define
$$\psi({\bf s},{\bf t}) = \left\{ (w_0,0,y_0,0), (w_1,1,y_1,1),\ldots (w_{n-1},n-1,y_{n-1},n-1)\right\}.$$

\begin{lemma} The map $\psi:S_{wx}\times S_{yz} \ra S$ is a bijection on sets.  In particular, $|S|=(n!)^2$. \label{lemma_bijectionmap}
\end{lemma}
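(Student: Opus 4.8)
The plan is to exhibit an explicit two-sided inverse to $\psi$ rather than to count configurations directly. The guiding observation I would start from is that, under the state conditions, each of the four coordinate projections of a configuration in $P$ is a bijection onto $\{0,1,\dots,n-1\}$; this is exactly what forces $|P|=(n!)^3$, since fixing the $w$-order records the $x$-, $y$-, and $z$-coordinates as three independent permutations. Passing to $S$ imposes $x_i=z_i$, which collapses the $z$-permutation onto the $x$-permutation, so a state of $S$ should be determined by precisely two permutations — exactly the data of a pair $({\bf s},{\bf t})\in S_{wx}\times S_{yz}$. Making this correspondence into an honest bijection is the content of the lemma.

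First I would check that $\psi$ actually lands in $S$. Writing ${\bf s}\in S_{wx}$ and ${\bf t}\in S_{yz}$ in the normalized order ${\bf s}=\{(w_i,i)\}_{i=0}^{n-1}$ and ${\bf t}=\{(y_i,i)\}_{i=0}^{n-1}$, the image point $(w_i,i,y_i,i)$ has equal $x$- and $z$-coordinates by construction, so the defining condition $x_i=z_i$ of $S$ holds. The only nontrivial point is the state condition that no two points share an axis-parallel line: the $x$- and $z$-coordinates run through $0,\dots,n-1$ and are automatically distinct, while the $w_i$ (resp.\ the $y_i$) are distinct precisely because ${\bf s}$ (resp.\ ${\bf t}$) is a genuine grid state, hence a permutation. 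Thus all four coordinate projections of $\psi({\bf s},{\bf t})$ are bijections and $\psi({\bf s},{\bf t})\in S$.

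Next I would build the inverse $\Phi\colon S\to S_{wx}\times S_{yz}$ directly. Given ${\bf u}\in S$, its $x$-projection is a bijection, so for each $i$ there is a unique point $(w_i,i,y_i,z_i)\in{\bf u}$ with $x$-coordinate equal to $i$, and $z_i=i$ because $x_i=z_i$ on $S$. I set $\Phi({\bf u})=\big(\{(w_i,i)\}_i,\{(y_i,i)\}_i\big)$, noting that the two tuples are valid grid states because the $w$- and $y$-projections of ${\bf u}$ are bijections. Comparing formulas, $\Phi\circ\psi$ reindexes ${\bf s}$ and ${\bf t}$ by their last coordinate and returns them unchanged, while $\psi\circ\Phi$ reassembles ${\bf u}$ point by point; both composites are the identity. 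Hence $\psi$ is a bijection, and since $|S_{wx}|=|S_{yz}|=n!$ we conclude $|S|=(n!)^2$.

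I expect no serious obstacle here: once the correct reading of the state condition is fixed, the lemma is bookkeeping. The one place to be careful is the consistency of the orderings, since $\psi$ orders ${\bf s}$ by its $x$-coordinate and ${\bf t}$ by its $z$-coordinate, and these two indices must be identified with the single $x=z$ index used to order ${\bf u}\in S$. Verifying that this identification is forced rather than chosen — that the ``last coordinate'' of each factor canonically matches the common $x=z$ coordinate of the hypercube state — is the only step that genuinely uses the $x_i=z_i$ restriction, and it is precisely what makes $\psi$ a bijection and not merely an injection.
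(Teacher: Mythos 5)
Your proof is correct and takes essentially the same approach as the paper: your inverse $\Phi$ is exactly the paper's pair of projection maps $\pi_{wx}\times\pi_{yz}$ (projecting each point of a hypercube state to the $wx$- and $yz$-planes), and both arguments conclude by verifying that the two composites are the identity. The only difference is that you spell out the well-definedness checks --- that $\psi$ lands in $S$ and that the projections yield genuine grid states --- which the paper dismisses as ``clear.''
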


\begin{proof} We use the proof mostly to describe a pair of functions on sets.  Define $\pi_{wx}:S\ra S_{wx}$ by
$$\pi_{wx}(\{(w_0,x_0,y_0,z_0),\dots,(w_{n-1},x_{n-1},y_{n-1},z_{n-1})\})=\{(w_0,x_0),\dots,(w_{n-1},x_{n-1})\},$$
and similarly define $\pi_{yz}:S\ra S_{yz}$ by projecting each point in a  state to the $yz$-plane.  Then it is clear that $\psi\circ (\pi_{wx}\times \pi_{yz}) = Id$ and $(\pi_{wx}\times \pi_{yz})\circ \psi=Id.$

\end{proof}

The complex $C^-(H\Gamma)$  has a grading and filtration determined by two functions on $S$.  Define the {\em Maslov grading} on $C^-(H\Gamma)$ by
$$M({\bf s}) = M_{wx}(\pi_{wx}({\bf s})) + M_{yz}(\pi_{yz}({\bf s})),$$
and define the {\it Alexander grading} by
$$A^i({\bf s}) = A^i_{wx}(\pi_{wx}({\bf s})) + A^i_{yz}(\pi_{yz}({\bf s})),$$
where the $i^{th}$ components are both projections from the same component of the hyperlink in the hypercube $H\Gamma$.

\medskip
To define the differential, we need to describe hyperrectangles in the hypercube $H\Gamma$ of size $n$. For the sake of easily defining hyperrectangles on $H\Gamma$, we will consider $H\Gamma$ for a moment as a $4$-torus by taking the natural quotient.  In $H\Gamma$, thought of as the fundamental domain of the $4$-torus,  $[a,b]\times [c,d] \times [0,n] \times [0,n]$ where $a,b,c,d \in [0,n)$ are integers, is an example of a possible hyperrectangle.  But in the $4$-torus, there is a ``complementary'' hyperrectangle $[b, a+n]\times [d,c+n] \times [0,n] \times [0,n]$ as well.  That complementary rectangle, thought of in the fundamental domain $H\Gamma$, will also be considered a hyperrectangle in $H\Gamma$.

\medskip

 Let  ${\bf s},{\bf t} \in S$ be two hypercube states.  A {\em $wx$-hyperrectangle $r$ connecting ${\bf s}$ to ${\bf t}$} is, after thinking of $H\Gamma$ as a $4$-torus, a hyperrectangle  such that

\begin{itemize}
\item ${\bf s}$ and ${\bf t}$ agree  along all but two grid lines parallel to the $x$-axis,\\

\item $\pi_{yz}({\bf s})=\pi_{yz}({\bf t})$,\\

\item in the projection of the hyperrectangle  to a rectangle $\pi_{wx}(r)$ in $G_{wx}$, all four corners of the rectangle  are points are  in $\pi_{wx}({\bf s})\cup \pi_{wx}({\bf t})$,\\

\item by traversing an $x$-axis parallel boundary segment of $\pi_{wx}(r)$ in the direction indicated by the orientation inherited from the grid, then the segment is oriented from $\pi_{wx}({\bf s})$ to $\pi_{wx}({\bf t})$.
\end{itemize}

Note that if ${\bf s}, {\bf t}\in S$ agree along all but two grid lines parallel to the $x$-axis, then there are exactly two rectangles satisfying the above conditions on the $4$-torus.  A rectangle $r$ is {\it empty} if Int$(r)\cap{\bf s}=\emptyset$. The set of all empty rectangles connecting ${\bf s}$ to ${\bf t}$ is denoted HRect$_{wx}^\circ({\bf s},{\bf t})$.  Define HRect$_{yz}^\circ({\bf s},{\bf t})$ similarly by interchanging the letters $w,x$ with $y,z$ respectively in the definition above.  Note that when HRect$_{yz}^\circ({\bf s},{\bf t})$ is non-empty, then HRect$_{wx}^\circ({\bf s},{\bf t})$ is, and vice-versa.

\medskip

Define two sets:   the set of variables $\{\mathtt{\bf W}_i\}_{i=1}^n$ that are in one-to-one correspondence with $\mathbb{W}=\{W_i\}_{i=1}^n$, the $W$-markings in $H\Gamma$ and the set of variables  $\{\mathtt{\bf Y}_i\}_{i=1}^n$ that are in one-to-one correspondence with $\mathbb{Y}=\{Y_i\}_{i=1}^n$, the $Y$-markings.   Let $R$ be the polynomial algebra over $\mathbb{Z}/2\mathbb{Z}$ generated by the set of elements $\{\mathtt{\bf W}_i\}_{i=1}^n$ and $\{\mathtt{\bf Y}_i\}_{i=1}^n$.  This ring has a Maslov grading so that the constant terms are in Maslov grading zero and the $\mathtt{\bf W}_i$ and $\mathtt{\bf Y}_i$  are in grading $-2$.  The Alexander filtration is defined  so that the constant terms are filtration level zero and the variables drop the filtration by one.
\medskip

For a $wx$-rectangle $r$ in $H\Gamma$, let $W_i(r)$ count the number of times the marking $W_i$ appears inside $r$.  Similarly define $Y_i(r)$.    The differential of the chain complex $\partial^-:C(H\Gamma) \ra C(H\Gamma)$ is given by
$$\partial^-({\bf s})=\sum_{{\bf t}\in S} \left(\sum_{r\in\mathrm{HRect}_{wx}^\circ({\bf s},{\bf t})}\mathtt{\bf W}_1^{W_1(r)}\cdots \mathtt{\bf W}_n^{W_n(r)}\cdot{\bf t} + \sum_{r'\in\mathrm{HRect}_{yz}^\circ({\bf s},{\bf t})}\mathtt{\bf Y}_1^{Y_1(r')}\cdots \mathtt{\bf Y}_n^{Y_n(r')}\cdot{\bf t}\right).$$

\medskip

It is clear that $\partial^-$ drops the Maslov index by 1 while preserving the Alexander multi-filtration.  Define $CH^-(H\Gamma)=H_* (C^-(H\Gamma),\partial^-)$ to be the cube homology of the hypercube diagram $H\Gamma$.

\subsection{Hypercube homology as knot Floer homology}  The definitions of and the notations for oriented grid diagrams, cube diagrams, and hypercube diagrams preceding this section were carefully described so as to make the proof of the invariance of $CH^-(H\Gamma)$  obvious from the isomorphism of $CH^-(H\Gamma)$ to the tensor product of the knot Floer homologies of $HFK^-(L_1)$ and $HFK^-(L_2)$, where $L_1$ and $L_2$ are the links associated to the oriented grid diagrams $G_{wx}$ and $G_{yz}$ of $H\Gamma$.

\bigskip

\begin{theorem}
\label{compthm}
Let $H\Gamma$ be a hypercube diagram and $G_{wx}$ and $G_{yz}$ be the oriented grid diagrams associated to $H\Gamma$. Let $(C^-(G_{wx}),\partial_{wx}^-)$ and $(C^-(G_{yz}),\partial_{yz}^-)$ be the  chain complexes associated to $G_{wx}$ and $G_{yz}$ respectively. Then
$$(C^-(H\Gamma),\partial^-)\cong(C^-(G_{wx}),\partial_{wx}^-)\otimes(C^-(G_{yz}),\partial_{yz}^-).$$
\end{theorem}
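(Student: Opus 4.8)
The plan is to show that the tensor product isomorphism holds on the level of generators, gradings, and differentials separately, leveraging the bijection $\psi$ already established in Lemma~\ref{lemma_bijectionmap}. First I would define the module isomorphism on generators: by Lemma~\ref{lemma_bijectionmap}, $\psi:S_{wx}\times S_{yz}\to S$ is a bijection, so sending a basis element ${\bf s}\otimes{\bf t}$ of $C^-(G_{wx})\otimes C^-(G_{yz})$ to $\psi({\bf s},{\bf t})\in S$ extends to an isomorphism of free modules. The one subtlety is the coefficient rings: $C^-(G_{wx})$ is a module over $R_{wx}=\mathbb{Z}/2\mathbb{Z}[\mathtt{\bf W}_1,\dots,\mathtt{\bf W}_n]$ and $C^-(G_{yz})$ over $R_{yz}=\mathbb{Z}/2\mathbb{Z}[\mathtt{\bf Y}_1,\dots,\mathtt{\bf Y}_n]$, while $C^-(H\Gamma)$ is a module over the combined ring $R=\mathbb{Z}/2\mathbb{Z}[\mathtt{\bf W}_1,\dots,\mathtt{\bf W}_n,\mathtt{\bf Y}_1,\dots,\mathtt{\bf Y}_n]$. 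Since $R\cong R_{wx}\otimes_{\mathbb{Z}/2\mathbb{Z}} R_{yz}$, the tensor product $C^-(G_{wx})\otimes C^-(G_{yz})$ is naturally an $R$-module and I would verify that $\psi$ respects this module structure, so that the generator-level bijection upgrades to an $R$-module isomorphism.

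Next I would check that the bigrading matches. This is essentially immediate from the definitions: the Maslov grading on $C^-(H\Gamma)$ was defined as $M({\bf s})=M_{wx}(\pi_{wx}({\bf s}))+M_{yz}(\pi_{yz}({\bf s}))$, and since $\pi_{wx}\circ\psi$ and $\pi_{yz}\circ\psi$ recover the two factors (as shown in the proof of Lemma~\ref{lemma_bijectionmap}), this is exactly the grading on a tensor product of graded complexes, where gradings add. The identical argument applies to each Alexander filtration level $A^i$. So the grading and filtration conventions were rigged precisely to make this step a formality.

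The main work, and the step I expect to be the chief obstacle, is matching the differentials. On the tensor product the differential is the Leibniz rule $\partial({\bf s}\otimes{\bf t})=(\partial^-_{wx}{\bf s})\otimes{\bf t}+{\bf s}\otimes(\partial^-_{yz}{\bf t})$ (signs are trivial over $\mathbb{Z}/2\mathbb{Z}$), whereas on $C^-(H\Gamma)$ the differential is a sum over $wx$-hyperrectangles and $yz$-hyperrectangles. The key claim to establish is that empty $wx$-hyperrectangles connecting $\psi({\bf s},{\bf t})$ to a state $\psi({\bf s}',{\bf t}')$ correspond bijectively to empty rectangles in $G_{wx}$ connecting ${\bf s}$ to ${\bf s}'$, \emph{and force ${\bf t}={\bf t}'$}, with the marking multiplicities $\mathtt{\bf W}_i^{W_i(r)}$ agreeing. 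I would prove this using the defining conditions of a $wx$-hyperrectangle: the condition $\pi_{yz}({\bf s})=\pi_{yz}({\bf t})$ in the hyperrectangle definition pins down the $yz$-component (forcing the second tensor factor to be unchanged), while the requirement that all four corners of $\pi_{wx}(r)$ lie in $\pi_{wx}({\bf s})\cup\pi_{wx}({\bf t})$ and that the boundary be oriented correctly exactly reproduces the definition of $\mathrm{Rect}^\circ_{wx}$ from the grid-diagram section. I would also verify that emptiness of the hyperrectangle is equivalent to emptiness of its $wx$-projection, using the constraint $x_i=z_i$ built into the definition of $S$ together with the full $[0,n]\times[0,n]$ extent of the hyperrectangle in the $yz$-directions. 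A symmetric argument handles the $yz$-hyperrectangle summand. Summing the two contributions then reproduces precisely the Leibniz differential on the tensor product, completing the identification $(C^-(H\Gamma),\partial^-)\cong(C^-(G_{wx}),\partial^-_{wx})\otimes(C^-(G_{yz}),\partial^-_{yz})$ as filtered bigraded $R$-complexes.
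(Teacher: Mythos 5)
Your proposal is correct and follows essentially the same route as the paper: the bijection $\psi$ of Lemma~\ref{lemma_bijectionmap} is promoted to a module isomorphism $\Psi$, the gradings match because $M$ and $A^i$ on $C^-(H\Gamma)$ were defined as sums of the grid-diagram gradings, and the differentials are identified by matching empty rectangles in $G_{wx}$ (resp.\ $G_{yz}$) with empty $wx$-hyperrectangles (resp.\ $yz$-hyperrectangles) having equal marking counts, which reproduces the Leibniz rule over $\mathbb{Z}/2\mathbb{Z}$. Your extra care about the coefficient ring $R\cong R_{wx}\otimes_{\mathbb{Z}/2\mathbb{Z}}R_{yz}$ and the emptiness equivalence via the $x_i=z_i$ constraint only makes explicit what the paper's proof treats implicitly.
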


\medskip

\begin{proof} Let ${\bf s}\in S_{wx}$ be a grid state for $G_{wx}$ and ${\bf t}\in S_{yz}$ be a grid state for $G_{yz}$.  Recall the maps from the proof of Lemma~\ref{lemma_bijectionmap}: $\psi, \pi_{wx}$ and $\pi_{yz}$.  Use $\psi$ to define a map
$$\Psi:C^-(G_{wx})\ot C^-(G_{yz}) \ra C^-(H\Gamma)$$
given by ${\bf s} \ot {\bf t} \mapsto \psi({\bf s}, {\bf t})$.  Since $\Psi$ is a bijection on the generating sets by Lemma~\ref{lemma_bijectionmap}, it extends to an isomorphism. The map $\Psi$ clearly preserves the Maslov and Alexander gradings since the Maslov and Alexander gradings of $C^-(H\Gamma)$ were defined as the sum of the Maslov and Alexander gradings of $G_{wx}$ and $G_{yz}$.

\medskip

Furthermore, the map extends so that  $\mathtt{\bf W}_i$ variables in $C^-(G_{wx})$ are mapped to the $\mathtt{\bf W}_i$ variables in $C^-(H\Gamma)$ (since there is a unique $W$ marking in $H\Gamma$ for every $W$ marking in $G_{wx}$.  Similarly, the $\mathtt{\bf Y}_i$ variables in $C^-(G_{yz})$ are mapped to the  $\mathtt{\bf Y}_i$ in $C^-(H\Gamma)$.  Thus, for a $wx$-hyperrectangle $r$, counting  $W_i(r)$ inside of $r$ is equal to counting $W_i(\pi_{wx}(r))$ for the rectangle $\pi_{wx}(r)$ in $G_{wx}$.  A similar statement holds for counting $yz$-hyperrectangles.     Observe that
$$\partial_{wx}^-\otimes\partial_{yz}^-({\bf s}\otimes {\bf t})  =  \partial_{wx}^-({\bf s})\otimes {\bf t} + {\bf s}\otimes\partial_{yz}^-({\bf t}).$$
The summand $\partial_{wx}^-({\bf s})\ot {\bf t}$ counts only empty rectangles in $G_{wx}$.  For an empty rectangle connecting ${\bf s}$ to some other state ${\bf s'}$ in $G_{wx}$, $\pi_{yz}(\Psi({\bf s})\ot {\bf t}))=\pi_{yz}(\Psi({\bf s'} \ot {\bf t}))$, which means that  $$\mbox{HRect}_{yz}^\circ(\Psi({\bf s}\ot {\bf t}),\Psi({\bf s'} \ot {\bf t}))$$ is empty.  A similar statement holds for empty rectangles in $G_{yz}$.  Therefore  empty rectangles in $G_{wx}$ correspond to empty $wx$-hyperrectangles in $H\Gamma$ where the count of $W$-markings is the same for each.  There is a similar correspondence between empty rectangles and the count of $Y$ markings in $G_{yz}$.  Hence,
\begin{eqnarray}
\nonumber{\Psi(\partial_{wx}^-\otimes\partial_{yz}^-({\bf s}\otimes {\bf t}))} &=& \Psi(\partial_{wx}^-({\bf s}) \otimes {\bf t}) + \Psi({\bf s}\otimes\partial_{yz}^-({\bf t}))\\ \nonumber
&=&   \Psi\left(\sum_{{\bf s'}\in S_{wx}}\sum_{r\in\mathrm{Rect}_{wx}^\circ({\bf s},{\bf s'})}\mathtt{\bf W}_1^{W_1(r)}\cdots \mathtt{\bf W}_n^{W_n(r)}\cdot {\bf s'}\ot {\bf t}\right) + \\ \nonumber
&\mbox{}& \hspace{1cm} \Psi\left(\sum_{{\bf t'}\in S_{yz}}\sum_{r\in\mathrm{Rect}_{yz}^\circ({\bf t},{\bf t'})}\mathtt{\bf Y}_1^{Y_1(r)}\cdots \mathtt{\bf Y}_n^{Y_n(r)}\cdot {\bf s}\ot {\bf t'}\right) \nonumber\\
\nonumber &=&\sum_{{\bf u}\in S} \left(\sum_{r\in\mathrm{HRect}_{wx}^\circ(\Psi({\bf s} \ot {\bf t}),{\bf u})}\mathtt{\bf W}_1^{W_1(r)}\cdots \mathtt{\bf W}_n^{W_n(r)}\cdot {\bf u} \right) + \\
&\mbox{} & \nonumber \hspace{1cm} \sum_{{\bf u'}\in S}\left(\sum_{r'\in\mathrm{HRect}_{yz}^\circ(\Psi({\bf s} \ot {\bf t}),{\bf u'})}\mathtt{\bf Y}_1^{Y_1(r')}\cdots \mathtt{\bf Y}_n^{Y_n(r')}\cdot{\bf u'}\right)\nonumber\\
&=& \partial^-(\Psi({\bf s} \ot {\bf t})).\nonumber
\end{eqnarray}
\medskip

Thus $C^-(H\Gamma)\cong C^-(G_{wx})\otimes C^-(G_{yz})$ as $R$-modules with $\Psi \circ (\partial^-_{wx} \ot \partial^-_{yz}) = \partial^- \circ \Psi$.
\end{proof}

\medskip

Because of Theorem \ref{MOS}, we can conclude that the filtered chain homotopy type of $(C^-(H\Gamma),\partial^-)$ is invariant under hypercube commutation moves and hypercube stabilization moves.  A little more work establishes:

\medskip

\begin{theorem} \label{invariant}
Let $H\Gamma$ be a $4$-dimensional hypercube diagram. Then $CH^-(H\Gamma)$ is a hypercube invariant, that is, it is invariant under all three hypercube moves.  In particular, $$CH^-(H\Gamma) \cong HFK^-(L_1)\ot HFK^-(L_2),$$
where $L_1$ is the link represented by the oriented grid diagram $G_{wx}$ and $L_2$ is the link represented by the oriented grid diagram $G_{yz}$.
\end{theorem}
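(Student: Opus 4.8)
The plan is to derive the theorem almost entirely from the chain-level splitting of Theorem~\ref{compthm}, obtaining the computation of $CH^-(H\Gamma)$ first and then reading off the three-move invariance as a formal consequence of how each move acts on the projected links.

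First I would pass from homology. Theorem~\ref{compthm} gives a bigraded chain isomorphism
$$(C^-(H\Gamma),\partial^-)\cong (C^-(G_{wx}),\partial^-_{wx})\ot (C^-(G_{yz}),\partial^-_{yz}),$$
where the tensor product is taken over the field $\mathbb{Z}/2\mathbb{Z}$ (indeed $R=R_{wx}\ot R_{yz}$, and both $C^-(G_{wx})$ and $C^-(G_{yz})$ are complexes of $\mathbb{Z}/2\mathbb{Z}$-vector spaces with differential $\partial^-=\partial^-_{wx}\ot 1 + 1\ot\partial^-_{yz}$). Because we work over a field, every module is flat and the Künneth theorem has no correction terms, so
$$CH^-(H\Gamma)=H_*(C^-(H\Gamma))\cong H_*(C^-(G_{wx}))\ot H_*(C^-(G_{yz})).$$
By Theorem~\ref{MOS} the two factors are $HFK^-(L_1)$ and $HFK^-(L_2)$, and since $\Psi$ preserves the Maslov and Alexander gradings (verified in the proof of Theorem~\ref{compthm}), this is an isomorphism of bigraded $R$-modules. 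This establishes the displayed formula in the statement.

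Next I would read off invariance under commutation and stabilization. As recorded when the moves were introduced, a hypercube stabilization induces grid stabilizations in all four projections $G_{wx},G_{yz},G_{xy},G_{zw}$, and a hypercube commutation induces grid commutations in two of them; in either case the oriented links $L_1$ and $L_2$ represented by $G_{wx}$ and $G_{yz}$ are unchanged. Invariance of $HFK^-$ under grid moves (Theorem~\ref{MOS}) then leaves the right-hand side $HFK^-(L_1)\ot HFK^-(L_2)$ unchanged, hence so is $CH^-(H\Gamma)$. (In fact the grid-level chain homotopy equivalences tensor together to an equivalence of hypercube complexes, which gives the stronger filtered statement noted after Theorem~\ref{compthm}.)

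Finally, the genuinely new input is the swap move, and this is where I expect the only real content beyond Theorem~\ref{compthm} to lie. The map $SW(w,x,y,z)=(y,z,w,x)$ interchanges the grid diagrams $G_{wx}$ and $G_{yz}$, so after a swap the diagram $G_{wx}$ represents the old $L_2$ and $G_{yz}$ represents the old $L_1$. Applying the formula of the first paragraph to the swapped hypercube yields $HFK^-(L_2)\ot HFK^-(L_1)$, which is isomorphic to $HFK^-(L_1)\ot HFK^-(L_2)$ by commutativity of the tensor product over $\mathbb{Z}/2\mathbb{Z}$; for the partial swap of split components in block form the same conclusion follows after reindexing the corresponding tensor factors along the blocks. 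The point that must be checked carefully is precisely that the swap exchanges $L_1$ and $L_2$ \emph{as oriented links} (so that commutativity of $\ot$ genuinely applies) rather than altering their knot types --- which is exactly the content of the move's definition at the level of the four grid projections. Granting this, all three moves preserve $CH^-(H\Gamma)$, and the substance of the theorem is already contained in Theorem~\ref{compthm}.
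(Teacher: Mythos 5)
Your proposal is correct and follows essentially the same route as the paper: both reduce stabilization and commutation invariance to the splitting of Theorem~\ref{compthm} combined with grid-move invariance of the grid complexes (Theorem~\ref{MOS}), and both handle the swap move by exchanging the two tensor factors --- your appeal to commutativity of $\ot$ is precisely the paper's explicit chain map $SW'({\bf s}\ot{\bf t})={\bf t}\ot{\bf s}$. The only cosmetic difference is that you make the K\"unneth step over $\mathbb{Z}/2\mathbb{Z}$ explicit and argue at the level of homology, whereas the paper verifies the swap at the chain level by checking that $SW$, $SW'$, and $\Psi$ commute with the differentials, which is what directly yields the stronger filtered-chain-homotopy-type invariance you note parenthetically.
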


\medskip

\begin{proof}
The cube homology $CH^-(H\Gamma)$ is already invariant under cube stabilizations and cube commutations by Theorem~\ref{compthm}.  We need only show that $CH^-(H\Gamma)$ is invariant under a hypercube swap move.  A hypercube swap move is an orientation preserving map $SW:H\Gamma \ra H\Gamma$ given by $SW(w,x,y,z) = (y,z,w,x)$ on the underlying cube in $\BR^4$.  It also maps markings by $W\mapsto Y$, $X\mapsto Z$, $Y\mapsto W$, and $Z\mapsto X$.   As was explained in the hypercube move section, a hypercube swap exchanges $G_{wx}$ and $G_{yz}$ and also exchanges $G_{xy}$ and $G_{zw}$.

\medskip

We need to show that it swaps state systems as well.  First, $SW$ exchanges the $x$-coordinate with $z$-coordinate for each point in ${\bf s}\in S$, so it extends to a well-defined map ${SW:S\ra S}$ on the set of states.  Set $SW':S_{wx}\ot S_{yz} \ra S_{yz} \ot S_{wx}$ by setting $SW'({\bf s} \ot {\bf t}) = {\bf t}\ot {\bf s}$.  Both maps extend to maps on $C^-(H\Gamma)$ and $C^-(G_{wx})\ot C^-(G_{yz})$ respectively.  Furthermore, it is clear that $SW$ commutes with $\Psi$ in the sense that on generators, $$\Psi(SW'({\bf s} \ot {\bf t})) = SW(\Psi({\bf s} \ot {\bf t}).$$
The map $SW'_*:C^-(G_{wx})\ot C^-(G_{yz}) \ra C^-(G_{yz}) \ot C^-(G_{wz})$ clearly commutes with the differentials.  Putting the three maps together gives the desired isomorphism $SW_*:CH^-(H\Gamma)\ra CH^-(H\Gamma)$.  The case of a component swap is similar.
\end{proof}

\medskip

From the proofs of Theorem~\ref{compthm} and Theorem~\ref{invariant} we see that the filtered chain homotopy type of $C^-(H\Gamma,\partial^-)$ is also a hypercube invariant.

\medskip

By setting each of the $\mathtt{\bf W}_i$ and $\mathtt{\bf Y}_i$ variables to $0$, we get the ``hat'' version of knot Floer homology:
$$\widetilde{CH}(H\Gamma,n)=H_*(C^-(H\Gamma)/\{\mathtt{\bf W}_i=\mathtt{\bf Y}_i=0\}_{i=1}^n).$$

\begin{corollary}
Let $H\Gamma$ be a $4$-dimensional hypercube diagram of size $n$ with $\ell$ oriented components. Choose an ordering on $\{\mathtt{\bf W}_i\}_{i=1}^n$ so that for $i=1,\dots, \ell$, $W_i$ corresponds to the $i^{th}$ component of $H\Gamma$.  Similarly, order $\{\mathtt{\bf Y}_i\}_{i=1}^n$.    Let $L_1$ be the link represented by $G_{wx}$ and $L_2$ the link represented by $G_{yz}$.  Then
$$\widetilde{CH}(H\Gamma,n)\cong \widehat{HFK}(L_1)\otimes \widehat{HFK}(L_2) \otimes \bigotimes_{i=1}^\ell  V_i^{\otimes 2n_i-2},$$
where $V_i$ is a $2$-dimensional vector space spanned by one generator in zero Maslov and Alexander multi-gradings and the other in Maslov grading minus one and Alexander multi-grading corresponding to minus the $i^{th}$ vector.
\end{corollary}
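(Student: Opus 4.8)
The plan is to deduce this hat-version statement directly from the chain-level tensor decomposition of Theorem~\ref{compthm}, combined with the single-diagram hat computation recorded immediately after Theorem~\ref{MOS}. First I would invoke Theorem~\ref{compthm}, which furnishes an isomorphism of $R$-complexes $\Psi: C^-(G_{wx})\ot C^-(G_{yz}) \ra C^-(H\Gamma)$ under which (as established in that proof) each variable $\mathtt{\bf W}_i$ acts only on the $C^-(G_{wx})$ factor and each $\mathtt{\bf Y}_i$ acts only on the $C^-(G_{yz})$ factor. The key structural observation is that the quotient by $\{\mathtt{\bf W}_i=\mathtt{\bf Y}_i=0\}_{i=1}^n$ is compatible with this factorization: killing the $\mathtt{\bf W}_i$ only alters the first tensor factor and killing the $\mathtt{\bf Y}_i$ only alters the second. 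Hence $\Psi$ descends to an isomorphism of complexes of $\mathbb{Z}/2\mathbb{Z}$-vector spaces
$$\left(C^-(G_{wx})/\{\mathtt{\bf W}_i=0\}\right)\ot\left(C^-(G_{yz})/\{\mathtt{\bf Y}_i=0\}\right) \ \cong\ C^-(H\Gamma)/\{\mathtt{\bf W}_i=\mathtt{\bf Y}_i=0\}.$$

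Next I would pass to homology. Because the coefficient ring is the field $\mathbb{Z}/2\mathbb{Z}$, both factors above are complexes of vector spaces, so the K\"unneth theorem applies with no Tor correction and identifies the homology of the tensor product with the tensor product of the homologies. Applying the hat-version computation following Theorem~\ref{MOS} to each factor then yields $H_*(C^-(G_{wx})/\{\mathtt{\bf W}_i=0\}) \cong \widehat{HFK}(L_1)\ot\bigotimes_{i=1}^\ell V_i^{\ot(n_i-1)}$ and, with the obvious letter changes, the analogous formula for $G_{yz}$ involving $\widehat{HFK}(L_2)$. Tensoring the two and regrouping via $V_i^{\ot(n_i-1)}\ot V_i^{\ot(n_i-1)} = V_i^{\ot(2n_i-2)}$ produces the asserted expression, while the additivity of the Maslov and Alexander gradings built into the definitions of $M$ and $A^i$ on $C^-(H\Gamma)$ ensures all of these isomorphisms respect the bigrading.

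The one point that requires genuine care, and which I expect to be the main obstacle, is the bookkeeping of the component count $n_i$: I must justify that a single integer $n_i$ governs both factors so that the exponents $(n_i-1)+(n_i-1)$ may legitimately be combined. This follows from the hyperlink structure, since each component visits the cyclic pattern $W\ra X\ra Y\ra Z\ra W$ and therefore carries equally many $W$-, $X$-, $Y$-, and $Z$-markings; the projection to $G_{wx}$ records that component's $W$-markings and the projection to $G_{yz}$ records its $Y$-markings, and these counts agree. Once this is in place the regrouping of the $V_i$ is unambiguous, and the residual checks — that $\Psi$ remains filtered and grading-preserving after the quotient, and that the chosen orderings of $\{\mathtt{\bf W}_i\}$ and $\{\mathtt{\bf Y}_i\}$ are matched component by component — are routine consequences of the constructions in the preceding two subsections.
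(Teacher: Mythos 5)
Your proposal is correct and follows essentially the same route the paper intends: the paper states this corollary without proof precisely because it is immediate from the chain-level isomorphism $C^-(H\Gamma)\cong C^-(G_{wx})\ot C^-(G_{yz})$ of Theorem~\ref{compthm}, the hat-version computation recorded after Theorem~\ref{MOS} applied to each grid factor, and the K\"unneth theorem over $\mathbb{Z}/2\mathbb{Z}$. Your added care about matching the component counts $n_i$ via the cyclic $W\ra X\ra Y\ra Z\ra W$ structure of the hyperlink is exactly the bookkeeping the paper leaves implicit (compare the remark after Theorem~\ref{hat_Euler} that both projections share one set of $\ell$ component variables).
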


Define $\widehat{CH}(H\Gamma) = \widehat{HFK}(L_1)\otimes \widehat{HFK}(L_2)$.

\bigskip

\subsection{Alexander polynomial of hypercube homology}
Let $H\Gamma$ be a hypercube diagram with $\ell$ components.  Let $t=(t_1,\dots, t_\ell)$ be a collection of variables, and for $\vec{s}=(s_1,\dots,s_\ell)\in (\frac12 \BZ)^\ell$, define $t^{\vec{s}}=t^{s_1}_1\dots t^{s_\ell}_\ell$.  For multi-graded groups $C_i(\vec{s})$ with Maslov grading $i$ and Alexander grading $\vec{s}$, define $$\chi(C;t)=\sum_{i,\vec{s}}(-1)^it^{\vec{s}}\mbox{rank}(C_i(\vec{s})).$$

\medskip

Following \cite{most}, we see that:

\medskip

\begin{theorem}
For any hypercube $H\Gamma$, let $L_1$ and $L_2$ be the oriented links represented by oriented grid diagrams $G_{wx}$ and $G_{yz}$.  The Euler characteristic of $\widehat{CH}$ is, up to sign,
$$\chi(\widehat{CH}(H\Gamma)) = \left\{\begin{array}{ll}
\pm \prod_{i=1}^{\ell} (t_i-2+t_i^{-1})\Delta_A(L_1;t)\cdot \Delta_A(L_2;t)  & \ell>1\\
\pm \Delta_A(L_1;t)\cdot \Delta_A(L_2;t) & \ell=1\end{array}\right.$$
where $\Delta_A(L_1,t)$ and $\Delta_A(L_2,t)$ are multivariable Alexander polynomials, normalized so that they are symmetric up to sign under the involution of sending $t_i$ to their inverses. \label{hat_Euler}
\end{theorem}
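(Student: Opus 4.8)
The plan is to reduce the statement to the already-established tensor decomposition and to the classical identification of the graded Euler characteristic of link Floer homology with the Alexander polynomial. By definition $\widehat{CH}(H\Gamma) = \widehat{HFK}(L_1)\otimes\widehat{HFK}(L_2)$, so the first step is to record that the graded Euler characteristic is multiplicative under tensor products. This is not quite formal bookkeeping: one must use that the Maslov and Alexander multi-gradings on $\widehat{CH}(H\Gamma)$ were defined (in the construction leading up to Theorem~\ref{compthm}) as the \emph{sums} $M = M_{wx}\circ\pi_{wx} + M_{yz}\circ\pi_{yz}$ and $A^i = A^i_{wx}\circ\pi_{wx} + A^i_{yz}\circ\pi_{yz}$ of the gradings pulled back from $G_{wx}$ and $G_{yz}$. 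Consequently the monomial $t^{\vec s}$ attached to a generator $a\otimes b$ factors as the product of the monomials attached to $a$ and to $b$, and therefore $\chi(\widehat{CH}(H\Gamma);t) = \chi(\widehat{HFK}(L_1);t)\cdot\chi(\widehat{HFK}(L_2);t)$.

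The second step is to invoke the known computation of $\chi(\widehat{HFK})$ from \cite{most} (following Ozsv\'ath--Szab\'o). For an $\ell$-component oriented link $L$, with $\Delta_A$ normalized to be symmetric up to sign under $t_i\mapsto t_i^{-1}$, this reads $\chi(\widehat{HFK}(L);t) = \pm\,\Delta_A(L;t)$ when $\ell=1$, and $\chi(\widehat{HFK}(L);t) = \pm\prod_{i=1}^\ell (t_i^{1/2}-t_i^{-1/2})\,\Delta_A(L;t)$ when $\ell>1$. I would apply this formula once to $L_1$ and once to $L_2$.

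The third step is to combine the two factors, which hinges on the structural fact that $L_1$ and $L_2$ carry a \emph{common} indexing of their components. Both are projections of the single hyperlink of $H\Gamma$, so each of the $\ell$ loops of the hyperlink simultaneously determines a component of $L_1$ and a component of $L_2$; in particular they have the same number of components (this is exactly the assertion ``if $L_1$ is a knot then so is $L_2$'' from the earlier invariance theorem), and the grading formula $A^i = A^i_{wx}\circ\pi_{wx} + A^i_{yz}\circ\pi_{yz}$ ensures that the \emph{same} variable $t_i$ records the $i$-th component in both factors. Multiplying the two expressions from Step~2 then yields $\pm\,\Delta_A(L_1;t)\Delta_A(L_2;t)$ when $\ell=1$, and for $\ell>1$ it produces $\pm\prod_{i=1}^\ell(t_i^{1/2}-t_i^{-1/2})^2\,\Delta_A(L_1;t)\Delta_A(L_2;t)$; the elementary identity $(t_i^{1/2}-t_i^{-1/2})^2 = t_i - 2 + t_i^{-1}$ converts this last expression into the stated formula.

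The main obstacle I expect is the normalization and sign bookkeeping rather than any homological difficulty. One must check that the shared-variable convention is genuinely consistent, so that the half-integer powers $(t_i^{1/2}-t_i^{-1/2})$ coming from $L_1$ and from $L_2$ pair up component-by-component to produce the integral factor $(t_i-2+t_i^{-1})$, rather than leftover mismatched half-integer powers. Because the statement is only claimed up to an overall sign, the remaining delicate point is simply verifying that the symmetrized normalizations of $\Delta_A(L_1;t)$ and $\Delta_A(L_2;t)$ are compatible, so that their product is again symmetric under $t_i\mapsto t_i^{-1}$ up to a single global sign.
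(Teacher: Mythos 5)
Your proposal is correct and takes essentially the same approach as the paper: the paper's entire proof is the definition $\widehat{CH}(H\Gamma)=\widehat{HFK}(L_1)\otimes\widehat{HFK}(L_2)$ together with the citation ``Following \cite{most}, we see that...'', which implicitly relies on exactly your three steps (multiplicativity of $\chi$ under tensor product because the gradings add, the Euler characteristic formula for $\widehat{HFK}$ from \cite{most}, and the identity $(t_i^{1/2}-t_i^{-1/2})^2=t_i-2+t_i^{-1}$). Your point about the shared component indexing is precisely what the paper records in the remark immediately following the theorem, where it notes there are only $\ell$ variables rather than $2\ell$ because each component of $H\Gamma$ determines one component of $L_1$ and one of $L_2$.
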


\medskip
Note that the same variables are used in both Alexander polynomials (there are only $\ell$ variables, not $2\ell$).  This is because each component in $H\Gamma$ gives rise to a component in $L_1$ and $L_2$.  The identification of variables corresponds to this identification of components in $L_1$ and $L_2$.

\bigskip
\section{Hopf linked tori and other examples}
\label{exampleofhypercubes}
\bigskip

In this section we provide specific examples of embedded tori and  calculate their hypercube homology invariants.  The embedding problem for interesting knotted tori is nontrivial.  It is fairly easy to create embedded knotted tori with hypercube diagrams $H\Gamma(L_1,L_2)$ where $L_1$ is any knot and $L_2$ is the unknot.    For example, the hypercube diagram $H\Gamma(L_1, L_2)$ pictured in Figure~\ref{torusproof1} is an embedded torus (as you can easily check that there are no vertical double point circles).  The projection of $H\Gamma(L_1,L_2)$ to $G_{wx}$ is a trefoil and the projection of $H\Gamma(L_1,L_2)$ to $G_{yz}$ is the unknot (see Figure~\ref{Trefoilknot_ex} below).

\begin{figure}[H]
\includegraphics[scale=1]{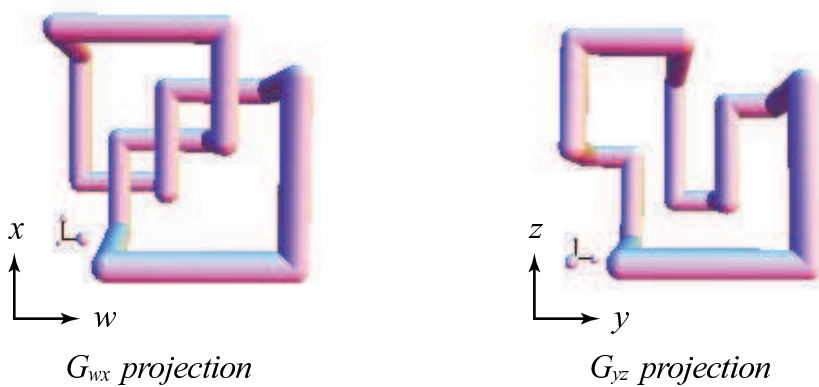}
\caption{The $G_{wx}$ and $G_{yz}$ grid projections of the hypercube $H\Gamma$ shown in Figure~\ref{torusproof1}.  Note that $\widehat{CH}(H\Gamma) = \widehat{HFK}(\mbox{trefoil knot})$.} \label{Trefoilknot_ex}
\end{figure}
\medskip

Interesting embedded knotted tori, i.e., those where $L_1$ or $L_2$ are not the unknot, are more difficult to find.  In this paper we describe a simple example, which turns out to be a link of two tori.  Like the example described above, it is easy to find a (small sized) hypercube diagram of embedded tori where the projection to $G_{wx}$ is the Hopf link and the projection to $G_{yz}$ is a split link of two unknots.  Starting with a standard torus embedding, the three examples below build up to an example of a hypercube that represents two embedded linked tori such that the hypercube projects to the Hopf link in {\em both} projections.   We will call this last example {\em Hopf linked tori}.  This example, which shows that it is possible to construct a hypercube diagram in which both projections are not unknots or split links of unknots, indicates that hypercube diagrams represent a large and interesting subset of embedded tori in $\BR^4$.

\medskip

We also calculate the Euler characteristic of the cube homology for the three examples.  It is interesting that the cube homology invariants distinguish the second and third examples, both of which can be thought of as standard tori ``linked'' in different ways.  It may be interesting to study the different linking numbers for surfaces in $\BR^4$ in terms of hypercube diagrams and hypercube homology (cf. \cite{FR}, see also \cite{Kirk}, \cite{Li}).

\medskip

Finally, as an indication of the interesting genera that can occur with hypercube diagrams, we present an example of an immersed torus represented by a hypercube diagram where the projections to $G_{wx}$ is a trefoil and the projection to $G_{yz}$ is  the $5_2$ knot.  Similar to the Hopf link torus above, it is quite likely that a computer search of hypercube diagrams in which one projection is a size  14 stabilized trefoil grid diagram and the other is a size 14 stabilized $5_2$ knot will yield a hypercube that represents an embedded torus in $\BR^4$.

\bigskip

\noindent {\bf Example 1.  The standard embedded torus}.  The standard torus is one of the few examples that can be easily visualized (see Figure~\ref{std_torus}).

\medskip

\begin{figure}[H]
\includegraphics[scale=1]{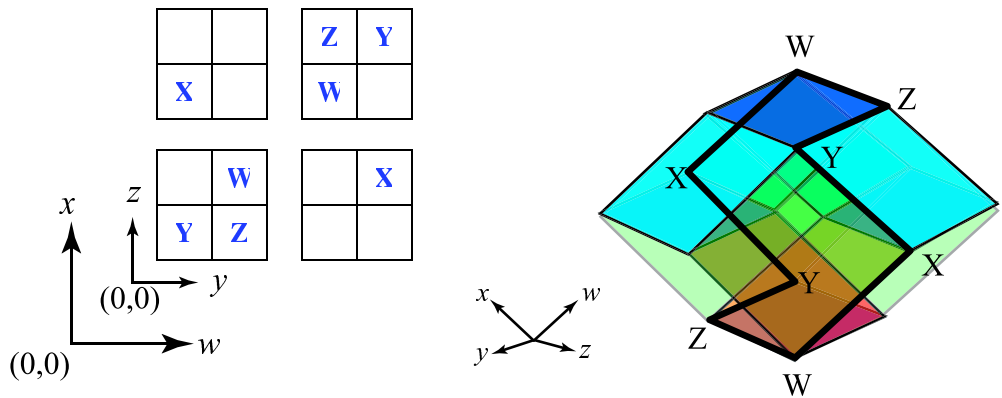}
\caption{A hypercube diagram of a standard torus on the left and its picture on the right. } \label{std_torus}
\end{figure}
\medskip

As in Figure~\ref{step4}, it is easier to see the torus above if part of it is removed:

\begin{figure}[H]
\includegraphics[scale=1]{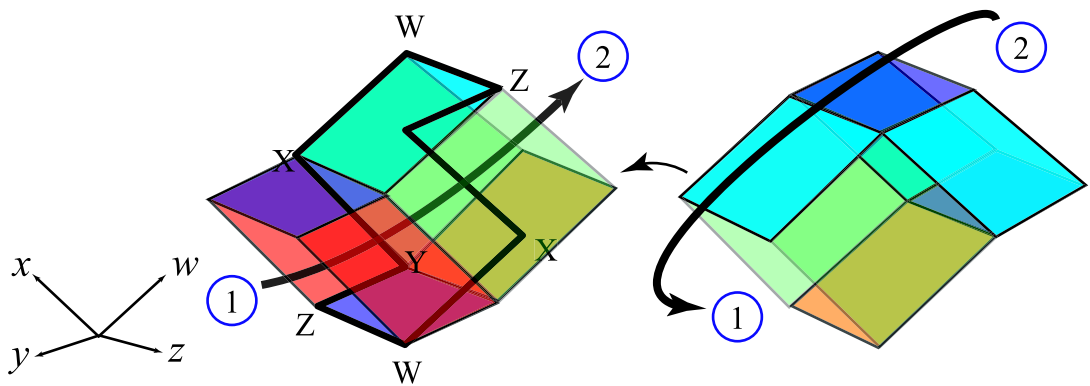}
\caption{A standard torus in $\BR^4$.  The loop going from 1 to 2 on the left and then from 2 to 1 on the right is a loop in $\BR^4$ that wraps around one of the $S^1$ factors of the torus. } \label{std_torus2}
\end{figure}
\medskip

The hypercube homology for the standard torus is  of the unknot:  $$\widehat{CH}(\mbox{standard torus})=\widehat{HFK}(\mbox{unknot})\ot \widehat{HFK}(\mbox{unknot}) \cong \BZ.$$

\bigskip

\noindent {\bf Example 2.  Once-linked standard tori.}  Below is a hypercube diagram schematic for two embedded tori such that the $G_{wx}$ projection is the Hopf link but the $G_{yz}$ projection is a split link of two unknots (see the projections on the right of Figure~\ref{oncelinkedtori}).

\begin{figure}[H]
\includegraphics[scale=1]{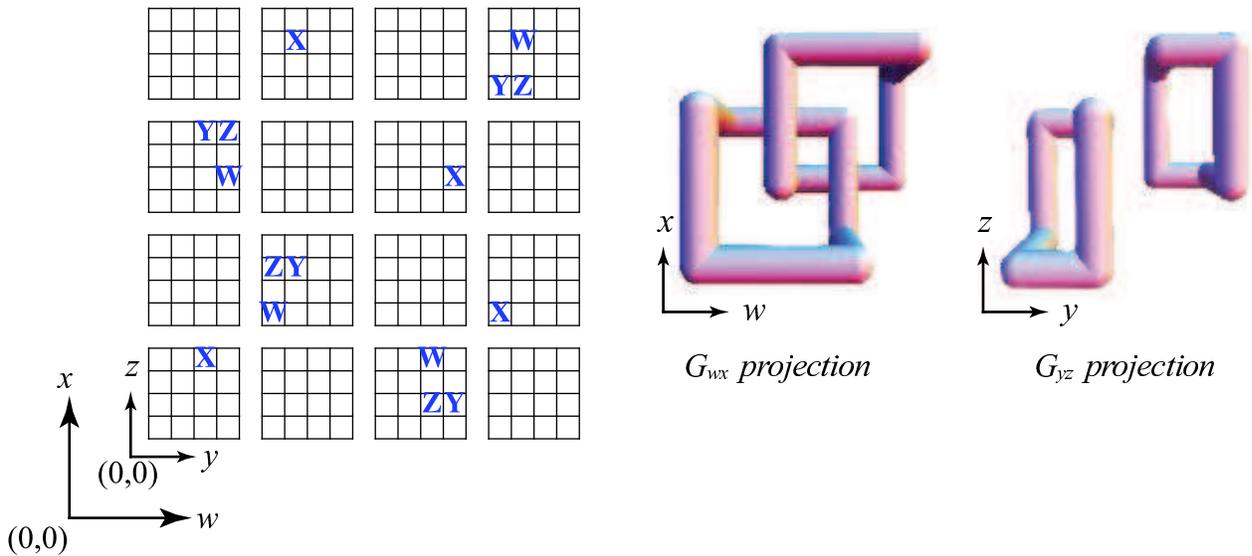}\caption{Once-linked tori.} \label{oncelinkedtori}
\end{figure}
\medskip

The hypercube homology for once-linked tori is the tensor product of $\widehat{HFK}$ of the Hopf link and $\widehat{HFK}$ of the split link of two unknots.  The Euler characteristic of the hypercube homology is zero.

\bigskip

\noindent{\bf Example 3. Hopf linked tori. } \  A computer program was used to search different hypercube diagrams with Hopf links in both the $G_{wx}$ and $G_{yz}$ projections.  The first example of such a hypercube diagram has size 8 (see the example in Figure~\ref{hopflinkedtori} below). While there may be smaller sized examples, of the millions of hypercube diagrams checked of size 7 or less for pairs of standard embedded tori, all were once-linked tori like the example above.  There are, however, plenty of {\em immersed} Hopf linked tori with hypercube diagrams of size 7 or less.    The difficulty is finding examples of hypercube diagrams that represent embedded tori (only horizontal or vertical double point circles but not both).

\medskip

\begin{figure}[H]
\includegraphics[scale=1]{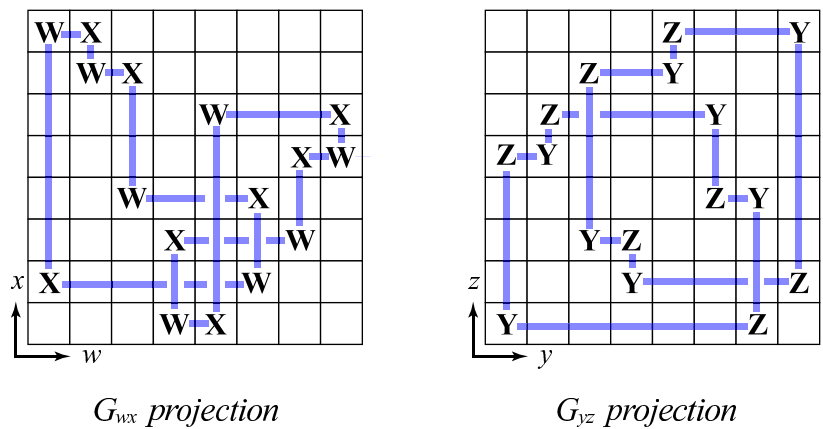}
\caption{The $G_{wx}$ and $G_{yz}$ projections of the hypercube diagram for Hopf Linked Tori.  Note the Hopf link in both projections.} \label{hopflinkedtori}
\end{figure}
\medskip

Finding an example of embedded Hopf linked tori is roughly equivalent to being able to find embedded knotted tori with two different knot types in the $G_{wx}$ and $G_{yz}$ projections.  Here is why:  Extensive computer experimentation with hypercube diagrams of embedded knotted tori shows that once the knot type of the $G_{wx}$ projection is fixed and the size of the hypercube is equal to the arc index of that knot type, then $G_{yz}$ projection is the unknot {\em with no crossings}.  By increasing the size of hypercube by stabilizing a few times, crossings in the $G_{yz}$ projection start to appear, but the crossing come in pairs of overcrossings or pairs of undercrossings (Type II Reidemeister moves on the unknot).  The general requirement needed to build any nontrivial knot in the $G_{yz}$ projection is the ability to get an overcrossing followed by an undercrossing.  This configuration is exactly what the Hopf linked tori above shows can be done in both projections.  Note that to get embedded Hopf linked tori, the stabilizations in the $G_{wx}$ projection were carefully chosen (study the $G_{wx}$ projection in Figure~\ref{hopflinkedtori}).   Stabilizing around crossings  in a similar way for grid diagrams of other knots should lead to interesting nontrivial embedded knotted tori.

\medskip

Figure~\ref{hopflinkedtori2} below is the hypercube diagram schematic for a Hopf linked tori.  It can be used to check that the hypercube diagram indeed represents two embedded tori (by checking for double point circles).

\medskip

\begin{figure}[H]
\includegraphics[scale=1]{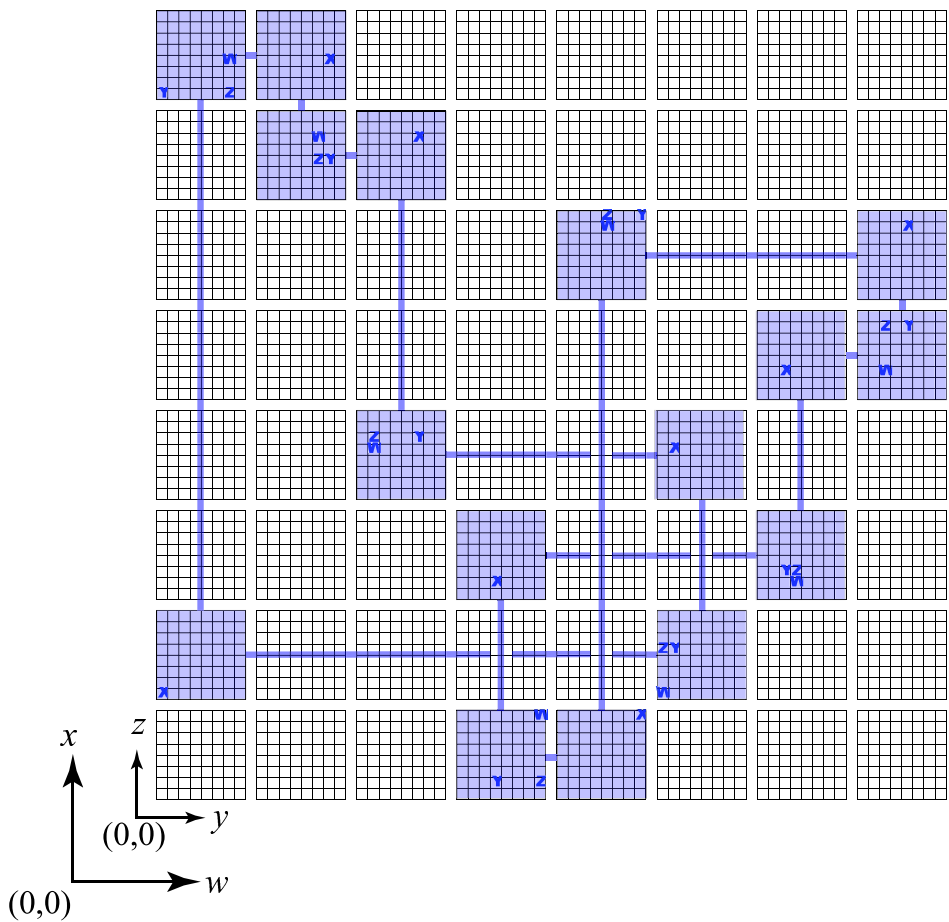}
\caption{The hypercube diagram schematic for an embedded Hopf linked tori (with the $G_{wx}$ projection overlaid upon the schematic).} \label{hopflinkedtori2}
\end{figure}
\medskip

The table below lists the $W$, $X$, $Y$, and $Z$ markings of the Hopf linked tori shown in Figure~\ref{hopflinkedtori2}.

\bigskip

\begin{center}
\begin{tabular}{|c|l|}

\hline
Marking & Points \\[.2cm] \hline
$W$ & $(\frac72, \frac12, \frac{15}{2}, \frac{15}{2}), (\frac{11}{2}, \frac32, \frac12, \frac12), (\frac{13}{2}, \frac52, \frac72, \frac32), (\frac52, \frac72, \frac32, \frac92),$ \\[.2cm]
& $(\frac{15}{2}, \frac92, \frac52,
   \frac52), (\frac92, \frac{11}{2}, \frac92, \frac{13}{2}), (\frac32, \frac{13}{2}, \frac{11}{2}, \frac{11}{2}), (\frac12, \frac{15}{2}, \frac{13}{2}, \frac72)$\\[.2cm] \hline
$X$ & $(\frac72, \frac52, \frac72, \frac32), (\frac{11}{2}, \frac72, \frac32, \frac92), (\frac{15}{2}, \frac92, \frac52, \frac52), (\frac52, \frac{15}{2}, \frac{11}{2}, \frac{11}{2}), $\\[.2cm]
& $(\frac{15}{2}, \frac{11}{2}, \frac92,
 \frac{13}{2}), (\frac92, \frac12, \frac{15}{2}, \frac{15}{2}), (\frac32, \frac{15}{2}, \frac{13}{2}, \frac72), (\frac12, \frac32, \frac12, \frac12)$\\[.2cm] \hline
$Y$ &  $(\frac72, \frac12, \frac72, \frac32), (\frac{11}{2}, \frac32, \frac32, \frac92), (\frac{13}{2}, \frac52, \frac52, \frac52), (\frac52, \frac72, \frac{11}{2}, \frac{11}{2})$ \\[.2cm]
& $(\frac{15}{2}, \frac92, \frac92, \frac{13}{2}), (\frac92, \frac{11}{2}, \frac{15}{2}, \frac{15}{2}), (\frac32, \frac{13}{2}, \frac{13}{2}, \frac72), (\frac12, \frac{15}{2}, \frac12, \frac12)$\\[.2cm] \hline
$Z$ & $(\frac72, \frac12, \frac{15}{2}, \frac32), (\frac{11}{2}, \frac32, \frac12, \frac92), (\frac{13}{2}, \frac52, \frac72, \frac52), (\frac52, \frac72, \frac32, \frac{11}{2}),$ \\[.2cm]
& $(\frac{15}{2}, \frac92, \frac72, \frac{13}{2}),  (\frac92, \frac{11}{2}, \frac92, \frac{15}{2}), (\frac32, \frac{13}{2}, \frac{11}{2}, \frac72), (\frac12, \frac{15}{2}, \frac{13}{2}, \frac12)$\\[.2cm] \hline
\end{tabular}
\end{center}

\bigskip

Finally, the ``hat'' version of hypercube homology for the Hopf linked tori is the tensor product of the ``hat'' version of knot Floer homology of two Hopf links.  The Euler characteristic is, by Theorem~\ref{hat_Euler},
$$\chi(\widehat{CH}(H\Gamma)) = \pm (t_1 -2 + t_1^{-1})(t_2 -2 +t_2^{-1}),$$
which shows that the embedded Hopf linked tori is different from the embedded once-linked tori (which has Euler characteristic zero).

\bigskip

\noindent{\bf Example 4.  An immersed torus knot that is an amalgamation of the Trefoil and the $5_2$ knot.}  We present the example in Figure~\ref{trefoil_K52_proj} to show how knotted tori can be constructed as an amalgamation of two different knots.

\begin{figure}[H]
\includegraphics[scale=1]{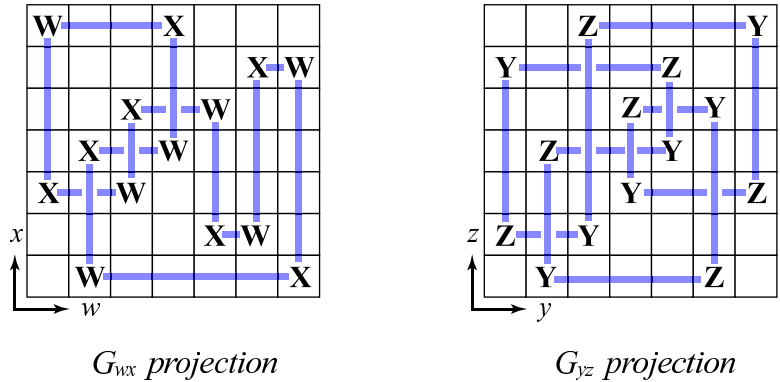}
\caption{The $G_{wx}$ and $G_{yz}$ projections for the immersed knotted torus that is an amalgamation of the Trefoil and the $5_2$ knot.} \label{trefoil_K52_proj}
\end{figure}
\medskip

It can be shown from the hypercube diagram schematic in Figure~\ref{trefoil_K52} that there are both horizontal and vertical double point circles, so this torus is immersed.

\begin{figure}[H]
\includegraphics[scale=1]{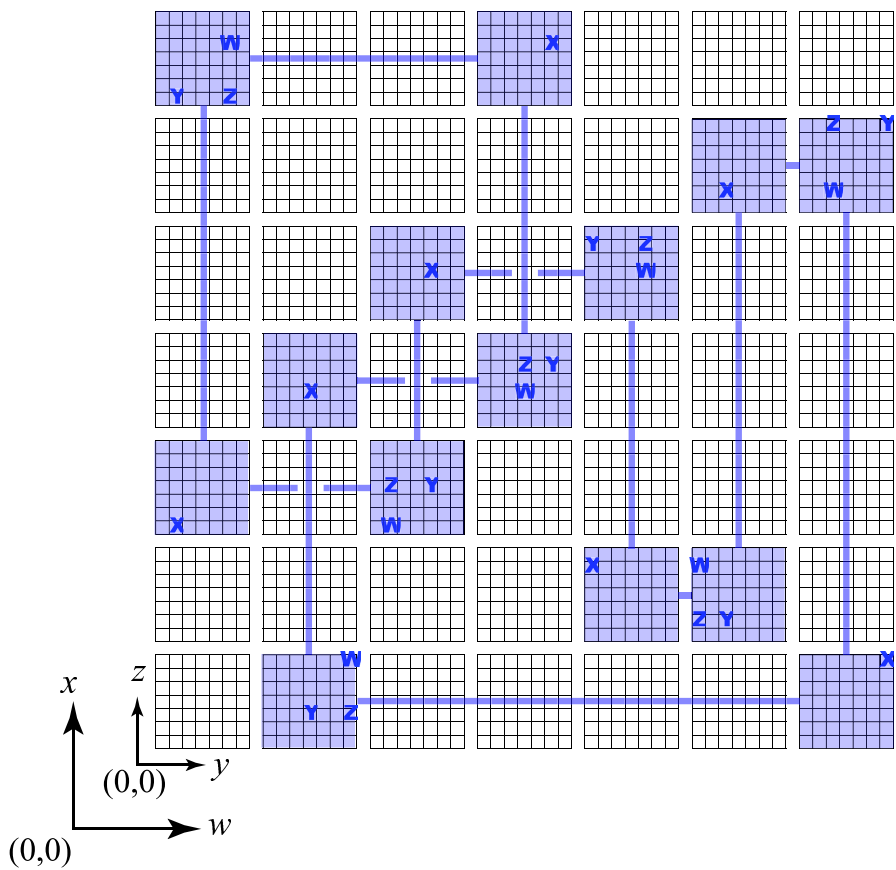}
\caption{The hypercube diagram schematic for the immersed knotted torus that is an amalgamation of the Trefoil and the $5_2$ knot.  The trefoil $G_{wx}$ projection is overlaid upon the schematic.} \label{trefoil_K52}
\end{figure}
\medskip

 By carefully stabilizing the trefoil knot grid diagram, it should be possible to find a different hypercube diagram that represents an embedded torus that is the amalgamation of the Trefoil and the $5_2$ knot.

\end{document}